\DeclareMathAlphabet{\mathscr}{OT1}{pzc}{m}{it} 
\numberwithin{equation}{section}
\newtheorem{theorem}{Theorem}[section]
\newtheorem{notation}[theorem]{Notation}
\newtheorem{lemma}[theorem]{Lemma}
\newtheorem{proposition}[theorem]{Proposition}
\newtheorem{corollary}[theorem]{Corollary}
\newtheorem{definition}[theorem]{Definition}
\newtheorem{hypothesis}[theorem]{Hypothesis}
\newtheorem{example}[theorem]{Example}
\newtheorem{remark}[theorem]{Remark}
\newenvironment{prooff}[1]{\begin{trivlist}
\item {\it \bf Proof}\quad} {\qed\end{trivlist}}
\newsavebox\myboxA
\newsavebox\myboxB
\newlength\mylenA
\newcommand*\xoverline[2][0.75]{%
    \sbox{\myboxA}{$\m@th#2$}%
    \setbox\myboxB\null
    \ht\myboxB=\ht\myboxA%
    \dp\myboxB=\dp\myboxA%
    \wd\myboxB=#1\wd\myboxA
    \sbox\myboxB{$\m@th\overline{\copy\myboxB}$}
    \setlength\mylenA{\the\wd\myboxA}
    \addtolength\mylenA{-\the\wd\myboxB}%
    \ifdim\wd\myboxB<\wd\myboxA%
       \rlap{\hskip 0.5\mylenA\usebox\myboxB}{\usebox\myboxA}%
    \else
        \hskip -0.5\mylenA\rlap{\usebox\myboxA}{\hskip 0.5\mylenA\usebox\myboxB}%
    \fi}
\title{Decoupled mild solutions of path-dependent PDEs and IPDEs
represented by BSDEs driven by cadlag martingales}
\author{
Adrien BARRASSO \thanks{ENSTA ParisTech, Unit\'e de Math\'ematiques
 appliqu\'ees, 828, boulevard des Mar\'echaux, F-91120 Palaiseau, France 
 and Ecole Polytechnique,  F-91128 Palaiseau, France.
E-mail: \sf adrien.barrasso@ensta-paristech.fr \\}
\qquad\quad
Francesco RUSSO\thanks{ENSTA ParisTech, Unit\'e de Math\'ematiques appliqu\'ees, 828, boulevard des Mar\'echaux, F-91120 Palaiseau, France. E-mail: \sf francesco.russo@ensta-paristech.fr}}
\date{April 19th 2018}
\begin{document}
\maketitle
{\bf Abstract.}
We focus on a class of path-dependent problems which include 
path-dependent (possibly Integro) PDEs, and their representation
via  BSDEs driven by a cadlag martingale.
For those equations we introduce the notion of {\it decoupled mild solution}
for which, under general assumptions, we study existence and uniqueness and its representation
via the afore mentioned BSDEs.
This concept 
generalizes a similar notion introduced by the authors in 
previous papers in the framework of classical PDEs and IPDEs.
 For every initial condition $(s,\eta)$, where $s$ is an initial time
and $\eta$ an initial path, the solution of such BSDE
 produces a couple of processes $(Y^{s,\eta},Z^{s,\eta})$.
In the classical (Markovian or not) literature
the function $u(s,\eta):= Y^{s,\eta}_s$ constitutes a 
viscosity type solution of an associated PDE (resp. IPDE);
our approach allows not only to identify $u$
as (in our language) the unique decoupled mild solution,
but also to solve quite generally the so called
 {\it identification problem}, i.e. 
to also characterize the $(Z^{s,\eta})_{s,\eta}$ processes in term of a deterministic function $v$
associated to the (above decoupled mild) solution $u$.

\bigskip
{\bf MSC 2010} Classification.  
60H30; 60H10; 35D99; 35S05; 60J35; 60J75.

\bigskip
{\bf KEY WORDS AND PHRASES.} Decoupled mild solutions; martingale problem;
cadlag martingale;  path-dependent PDEs; backward stochastic differential equation; identification problem.

\section{Introduction}

We focus on a family of path-dependent problems of the type
\begin{equation}\label{PDEIntro}
\left\{
\begin{array}{l}
A Y + f(\cdot,\cdot,Y,\Gamma(\Psi,Y))=0\text{ on }[0,T]\times\Omega\\
Y_T=\xi\text{ on }\Omega,
\end{array}\right.
\end{equation}
where $A$ is a linear map from some linear subspace $ {\mathcal D}(A)$
of the space of progressively measurable processes into the space of progressively measurable processes,
 $\Psi:=(\Psi^1,\cdots,\Psi^d)$ is a given vector of elements of $\mathcal{D}(A)$   
and $\Gamma$ is a {\it carr\'e du champs} type operator 
defined by $\Gamma(\Phi,\Phi'):=A(\Phi \Phi')-\Phi A(\Phi')-\Phi' A(\Phi)$.
Associated with this map, there is a  path-dependent
system of projectors $(P_s)_{s\in\mathbbm{R}_+}$, which extends the notion
of semigroups from the Markovian case, for which $A$ is a
weak generator, see Definition \ref{WeakGen}.
A typical example is to consider $\Psi:=X$ the canonical process, and a map $A$ given by
\begin{equation}\label{E151} 
\begin{array}{rl}
(A\Phi)_t(\omega)
:=& (D\Phi)_t(\omega) + \frac{1}{2}Tr(\sigma_t\sigma_t^{\intercal}(\nabla^2\Phi)_t(\omega))  + \beta_t(\omega)\cdot(\nabla \Phi)_t(\omega) \\
&+\int(\Phi_t(\omega+\gamma_t(\omega,y)\mathds{1}_{[t,+\infty[})-\Phi_t(\omega)- \gamma_t(\omega,y)\cdot (\nabla \Phi)_t(\omega))F(dy),
\end{array}
\end{equation}
where $\beta,\sigma,\gamma$ are bounded path-dependent predictable coefficients and $F$ is a bounded positive measure not charging $0$. In \eqref{E151}, $D$ is the horizontal derivative and $\nabla$ is the vertical gradient intended in the sense of \cite{dupire, contfournie13}.
In that case one has 
\begin{equation}
		\Gamma(X,\Phi)_t=(\sigma\sigma^{\intercal}\nabla \Phi)_t + \int_{\mathbbm{R}^d}\gamma_t(\cdot,y)(\Phi_t(\cdot+\gamma_t(\cdot,y)\mathds{1}_{[t,+\infty[})-\Phi_t)F(dy).
\end{equation}
If $\gamma\equiv 0$ then \eqref{PDEIntro} becomes the path-dependent PDE
\begin{equation}\label{PDEIntro2}
\left\{
\begin{array}{l}
DY + \frac{1}{2}Tr(\sigma\sigma^{\intercal}\nabla^2Y)  + \beta\cdot\nabla Y + f(\cdot,\cdot,Y,\sigma\sigma^{\intercal}\nabla Y)=0\text{ on }[0,T]\times\Omega\\
Y_T=\xi\text{ on }\Omega.
\end{array}\right.
\end{equation}
We introduce a notion of  {\it decoupled mild solution} 
which is inspired by the one
for classical (I)PDEs introduced in \cite{paper2,paper3},
which can be represented by solutions of Markovian BSDEs.
Concerning the corresponding  notion for \eqref{PDEIntro} 
the intuition behind  is the following.
We decouple the first line of equation \eqref{PDEIntro} into 
\begin{equation} 
\left\{
\begin{array}{ccl}
AY &=& - f(\cdot,\cdot,Y,Z)\\
Z^i &=&  \Gamma(\Psi^i,Y),\quad 1 \le i \le d,
\end{array}\right.
\end{equation}
which we can also write
\begin{equation}
\left\{
\begin{array}{ccl}
AY &=& - f(\cdot,\cdot,Y,Z)\\
A(Y \Psi^i) &=& Z^i + Y A\Psi^i + \Psi^iAY,\quad    1 \le i \le d,
\end{array}\right.
\end{equation}
and finally
\begin{equation}\label{intuition}
\left\{
\begin{array}{ccl}
AY &=& - f(\cdot,\cdot,Y,Z)\\
A(Y \Psi^i) &=& Z^i + Y A\Psi^i- \Psi^if(\cdot,\cdot,Y,Z),\quad   1 \le i \le d.
\end{array}\right.
\end{equation}
Taking \eqref{intuition} into account and inspired  by the classical notion of mild solution of an
evolution problem, we define a decoupled mild solution of equation \eqref{PDEIntro} as a functional $Y$ for which there exists an auxiliary $\mathbbm{R}^d$-valued functional $Z:=(Z^1,\cdots,Z^d)$ such that for all $(s,\eta)\in[0,T]\times \Omega$ we have
\small 
		\begin{equation}\label{MildEq}
		\left\{
		\begin{array}{rl}
		Y_s(\eta)&=P_s[\xi](\eta)+\int_s^TP_s\left[f\left(r,\cdot,Y_r,Z_r\right)\right](\eta)dr\\
		(Y\Psi^1)_s(\eta) &=P_s[\xi \Psi^1_T](\eta) -\int_s^TP_s\left[\left(Z^1_r+Y_rA\Psi^1_r-\Psi^1_rf\left(r,\cdot,Y_r,Z_r\right)\right)\right](\eta)dr\\
		&\cdots\\
		(Y\Psi^d)_s(\eta) &=P_s[\xi \Psi^d_T](\eta) -\int_s^TP_s\left[\left(Z^d_r+Y_rA\Psi^d_r-\Psi^d_rf\left(r,\cdot,Y_r,Z_r\right)\right)\right](\eta)dr.
		\end{array}\right.
		\end{equation}
\normalsize
The couple $(Y,Z)$ will be called  \textit{solution of the identification problem related to} $(f,\xi)$
because it can be strictly related to BSDEs driven by cadlag martingales which are one
natural generalization of classical Brownian BSDEs. 
We  consider for any $(s,\eta)$ the BSDE
\begin{equation}\label{BSDEIntro}
Y^{s,\eta}=\xi+\int_{\cdot}^{T}f\left(r,\cdot,Y^{s,\eta}_r,\frac{d\langle M^{s,\eta},M[\Psi]^{s,\eta}\rangle_r}{dr}\right)dr-(M^{s,\eta}_T-M^{s,\eta}_{\cdot}),
\end{equation}
in the (completed) stochastic basis $\left(\Omega,\mathcal{F}^{s,\eta},\mathbbm{F}^{s,\eta},\mathbbm{P}^{s,\eta}\right)$, where 
$(\mathbbm{P}^{s,\eta})_{(s,\eta)\in\mathbbm{R}_+\times\Omega}$ solves a martingale problem associated to $(\mathcal{D}(A),A)$.
In \eqref{BSDEIntro}, $M[\Psi]^{s,\eta}$ is the driving 
martingale of the BSDE, and is the martingale part of the process $\Psi$ under $\mathbbm{P}^{s,\eta}$.
These BSDEs were considered in a more general framework by the authors 
in \cite{paper3}. A significant contribution about BSDEs driven by 
cadlag martingales and beyond was provided by \cite{sant} and \cite{qian}.
Those BSDEs have however a {\it forward} component which is modeled
in law by the fixed family $(\mathbbm{P}^{s,\eta})_{(s,\eta)\in\mathbbm{R}_+\times\Omega}$.
An important application for path-dependent (I)PDEs is 
Theorem \ref{MainTheorem} that states the following.
 Suppose that 
 the path-dependent SDE with coefficients $\beta,\sigma,\gamma$ 
admits existence and uniqueness in law  for every initial condition $(s,\eta)$; 
we suppose moreover that $\beta_t,\sigma_t$ (resp. $\gamma_t(\cdot,x)$) are continuous for the Skorokhod topology in $\omega$ for almost all $t$ (resp. $dt\otimes dF$ a.e.),
 that  $f(\cdot,\cdot,0,0),\xi$ have  polynomial growth and that
$f$ is  Lipschitz in $(y,z)$ uniformly in $(t,\omega)$.
Then there is a unique decoupled mild solution $Y$ for \eqref{PDEIntro}
 with $\Psi:=X$ and $A$ given in \eqref{E151}. Moreover, both processes $Y,Z$ appearing in \eqref{MildEq} can be represented through the associated BSDEs \eqref{BSDEIntro}. In particular, \eqref{MildEq} gives an analytical meaning to the second process $Z$ obtained through those BSDEs.
In general the way of linking the first component $Y$ of the
 solution $(Y,Z)$ of a BSDE with 
the solution of a PDE is made by means of viscosity solutions.
However, even when the BSDE is Markovian, this does not allow to identify 
 $Z$. In particular, when $\gamma\equiv 0$,
our technique allows to characterize $Z$ as
a {\it generalized gradient}
 even if the solution does not have the vertical derivative,
contrarily to  the case in \cite{masiero}.

Brownian Backward stochastic differential equations (BSDEs)
 were introduced in  \cite{parpen90}, after a pioneering work of \cite{bismut}. When those involve a forward
dynamic described by the solution $X$ of a Brownian Markovian SDE, they are said to be Markovian, and
 are naturally linked to a parabolic PDE, see \cite{pardoux_peng92}.
In particular, under reasonable conditions,
which among others  ensure well-posedness,
the solutions of BSDEs produce {\it viscosity} type solutions
for the mentioned PDE. 
Recently 
 Brownian BSDEs of the type 
 \begin{equation} \label{EBSDE}
 	Y^{s,\eta}=\xi\left((B^{s,\eta}_t)_{t\in[0,T]}\right)+\int_{\cdot}^Tf\left(r,(B^{s,\eta}_t)_{t\in[0,r]},Y^{s,\eta}_r,Z^{s,\eta}_r\right)dr-\int_{\cdot}^TZ^{s,\eta}_rdB_r,
 \end{equation}
 where $B$ is a Brownian motion and for any $s\in[0,T]$, $\eta\in\mathbbm{D}([0,T],\mathbbm{R}^d)$, $B^{s,\eta}=\eta(\cdot\wedge s)+(B_{\cdot\vee s}-B_s)$
were associated to the path-dependent semi-linear PDE
\begin{equation}\label{PDEparabolique}
\left\{
\begin{array}{l}
D \Phi + \frac{1}{2}Tr(\nabla^2 \Phi) + f(\cdot,\cdot,\Phi,\nabla \Phi)=0\quad \text{ on } [0,T[\times\Omega \\
\Phi_T = \xi.
\end{array}\right.
\end{equation}
Path-dependent PDEs of previous type have been investigated 
by several methods. For instance strict (classical, regular) solutions 
have been 
studied in \cite{DGR, flandoli_zanco13, cosso_russo15a}
under the point of view of Banach space valued stochastic processes.
It was shown for instance in  \cite{cosso_russo15a, Peng2016} that under
some assumptions the mapping $(s,\eta)\longmapsto Y^{s,\eta}_s$ is the unique smooth solution of \eqref{PDEparabolique}.
Another popular approach is the one of {\it viscosity solutions}, 
which was considered by several authors. For instance
it was shown in \cite{ektz} 
 that if $f$ is bounded, continuous
in $t$ , uniformly continuous in the second variable, and uniformly 
Lipschitz continuous in $(y, z)$ and if $\xi$ is bounded uniformly continuous, $(s,\eta)\longmapsto Y^{s,\eta}_s$ is a viscosity solution of \eqref{PDEparabolique} 
in some specific sense, where the sense of solutions
involved the underlying probability. 
On another level, \cite{cosso_russo15b} considered
the so called {\it strong-viscosity} solutions 
(based on approximation techniques), which 
are an analytic concept, the first under non-smoothness 
conditions.  
Another interesting approach (probabilistic) but still based 
on approximation (discretizations) was given by
\cite{leao_ohashi_simas14}.
More recently, \cite{BionNadal} produced 
a viscosity solution to a more general path-dependent (possibly integro)-PDE
 through  Dynamic risk measures. 
In all those cases
the solution $\Phi$ of \eqref{PDEparabolique} was associated 
to the process $Y^{s,\eta}$ of the solution couple
$(Y^{s,\eta}, Z^{s,\eta})$ of \eqref{EBSDE} with initial time $s$ and
initial condition $\eta$.
As mentioned earlier a challenging link to be explored was 
the link between $Z^{s,\eta}$ and the solution of the path-dependent PDE $\Phi$. For instance in the case of Fr\'echet $C^{0,1}$  solutions $\Phi$ 
defined on $C([-T,0])$,
 then
 $Z^{s,\eta}$ is equal to the {\it vertical} derivative
$\nabla \Phi$, see for instance \cite{masiero}. 

The paper is organized as follows. After Section \ref{Not} 
devoted to fix some notations and basic vocabulary,
 Section \ref{S1} recalls some fundamental
 tools from the companion paper \cite{paperMPv2}.
In Section \ref{S2}, we are given  a general path dependent canonical class, 
 its associated path-dependent system of projectors $(P_s)_{s\in\mathbbm{R}_+}$
and we treat BSDEs driven by a general path-dependent MAF, 
see Definition \ref{DefAF}. 
In Subsection \ref{S2b} we are given a weak generator $A$ of $(P_s)_{s\in\mathbbm{R}_+}$,  and a corresponding abstract equation. We define the notion of decoupled mild solution of that equation and prove under some conditions, existence and uniqueness of such a solution in Theorem \ref{AbstractTheorem}.
In Section \ref{S3}, we focus on the framework of (I)PDEs. 
 In Subsection \ref{S1c} (resp. \ref{S3a}) we recall some results concerning path-dependent SDEs (resp.  path-dependent differential operators). 
In Subsection \ref{S3b}, we consider an IPDE of coefficients $\beta,\sigma,\gamma$ (which when $\gamma\equiv 0$ is given by \eqref{PDEIntro2}) and Theorem \ref{MainTheorem} states the existence and uniqueness of a decoupled mild solution.
Proposition \ref{classical} compares classical and decoupled mild solutions for that IPDE.

\section{Basic vocabulary and Notations}\label{Not}

For fixed $d,k\in\mathbbm{N}^*$, $\mathcal{C}^{k}_b(\mathbbm{R}^d)$ will denote the set of functions $k$ times differentiable with bounded continuous derivatives.
A topological space $E$ will always be considered as a measurable space 
equipped with its Borel $\sigma$-field which shall be denoted $\mathcal{B}(E)$.

Let $(\Omega,\mathcal{F})$, $(E,\mathcal{E})$ be two measurable spaces. A measurable mapping from $(\Omega,\mathcal{F})$ to $(E,\mathcal{E})$ shall often be called a \textbf{random variable} (with values in $E$), or in short r.v. 
If $\mathbbm{T}$ is some index set, a family $(X_t)_{t\in \mathbbm{T}}$  of 
 r.v. with values in $E$,  will be called  \textbf{random field} (indexed by $\mathbbm{T}$ with values in $E$). In particular, if $\mathbbm{T}$ is an interval included in $\mathbbm{R}_+$, $(X_t)_{t\in \mathbbm{T}}$ will be called a \textbf{stochastic process} (indexed by $\mathbbm{T}$ with values in $E$).
 
Given a measurable space $\left(\Omega,\mathcal{F}\right)$, for any $p \ge 1$, the set of real valued random variables with finite $p$-th moment under probability $\mathbbm{P}$ will be denoted $\mathcal{L}^p(\mathbbm{P})$ or $\mathcal{L}^p$ if there can be no ambiguity concerning the underlying probability.
Two random fields (or stochastic processes) $(X_t)_{t\in \mathbbm{T}}$, $(Y_t)_{t\in \mathbbm{T}}$ indexed by the same set and with values in the same space will be said to be \textbf{modifications (or versions)
 of each other} if for every $t\in\mathbbm{T}$, $\mathbbm{P}(X_t=Y_t)=1$.

A filtered probability space $\left(\Omega,\mathcal{F},\mathbbm{F}:=(\mathcal{F}_t)_{t\in\mathbbm{R}_+},\mathbbm{P}\right)$  will be called called  \textbf{stochastic basis} and will be said to \textbf{fulfill the usual conditions} if the filtration is right-continuous, if the probability space is complete and if $\mathcal{F}_0$ contains all the $\mathbbm{P}$-negligible sets.
Let $\left(\Omega,\mathcal{F},\mathbbm{F},\mathbbm{P}\right)$ be a stochastic
 basis.
Let $Y$ be a process and $\tau$ a stopping time, we denote  $Y^{\tau}$ the process $t\mapsto Y_{t\wedge\tau}$.
If $\mathcal{C}$ is a set of processes, 
we will say that $Y$ is {\bf locally in} $\mathcal{C}$ (resp. locally verifies some property) if there exists an a.s. increasing sequence of stopping times  $(\tau_n)_{n\geq 0}$ tending a.s. to infinity such that for every $n$, $Y^{\tau_n}$ belongs to $\mathcal{C}$ (resp. verifies the mentioned property).
In this paper we will consider martingales (with respect to a given 
filtration and probability), which are not necessarily cadlag.
For any cadlag local martingales $M,N$, we denote $[M]$ (resp. $[M,N]$) the \textbf{quadratic variation} of $M$ (resp. \textbf{quadratic covariation} of $M,N$). If moreover $M,N$ are locally square integrable, $\langle M,N\rangle$ (or simply $\langle M\rangle$ if $M=N$) will denote their (predictable) \textbf{angular bracket}.

\section{Fundamental tools}\label{S1}

\subsection{Path-dependent canonical classes and systems of projectors}\label{S1a}

We start by recalling some notions and results of Section 3 of  \cite{paperMPv2} 
that will be used all along the paper.
The first definition refers to the  canonical space that one can find 
in \cite{jacod79}, see paragraph 12.63.
\begin{notation}\label{canonicalspace}
In the whole section  $E$ will be a fixed  Polish  space (a separable completely metrizable topological space),
which will be called the \textbf{state space}.

$\Omega:=\mathbbm{D}(\mathbbm{R}_+,E)$  will denote the Skorokhod   space of functions from $\mathbbm{R}_+$ to $E$  right-continuous  with left limits (e.g. cadlag).
For every $t\in\mathbbm{R}_+$ we denote the coordinate mapping $X_t:\omega\mapsto\omega(t)$ and we define on $\Omega$ the $\sigma$-field  $\mathcal{F}:=\sigma(X_r|r\in\mathbbm{R}_+)$. 
  On the measurable space $(\Omega,\mathcal{F})$, we introduce \textbf{initial filtration}  $\mathbbm{F}^o:=(\mathcal{F}^o_t)_{t\in\mathbbm{R}_+}$, where $\mathcal{F}^o_t:=\sigma(X_r|r\in[0,t])$, and the (right-continuous) \textbf{canonical filtration} $\mathbbm{F}:=(\mathcal{F}_t)_{t\in\mathbbm{R}_+}$, where $\mathcal{F}_t:=\underset{s>t}{\bigcap}\mathcal{F}^o_s$. 
$\left(\Omega,\mathcal{F},\mathbbm{F}\right)$ will be called the \textbf{canonical space} (associated to $E$), and $X$ the \textbf{canonical process}.
On $\mathbbm{R}_+\times\Omega$, we will denote by $\mathcal{P}ro^o$ (resp. $\mathcal{P}re^o$) the  $\mathbbm{F}^o$-progressive (resp. $\mathbbm{F}^o$-predictable)  $\sigma$-field.
$\Omega$ will be equipped with the Skorokhod topology 
which makes $\Omega$ to be a Polish space  since $E$
 is itseld Polish (see Theorem 5.6 in chapter 3 of
\cite{EthierKurz}), and for which the Borel $\sigma$-field is $\mathcal{F}$, 
see Proposition 7.1 in chapter 3 of \cite{EthierKurz}.  This in particular implies that $\mathcal{F}$ is separable, as the Borel $\sigma$-field of a separable metric space.

$\mathcal{P}(\Omega)$ will denote the set of probability measures on $\Omega$ and will be equipped with the topology of weak convergence of measures which also makes it a Polish space since $\Omega$ is Polish, see Theorems 1.7 and 3.1 in \cite{EthierKurz} chapter 3. It will also be equipped with the associated Borel $\sigma$-field.
\end{notation}

\begin{notation}\label{Stopped}
For any $\omega\in\Omega$ and $t\in\mathbbm{R}_+$, the path $\omega$ stopped at time $t$ $r\mapsto \omega(r\wedge t)$ will be denoted $\omega^t$.
\end{notation}

\begin{definition}\label{DefCondSyst}
	A \textbf{path-dependent canonical class} will be a set of probability measures $(\mathbbm{P}^{s,\eta})_{(s,\eta)\in\mathbbm{R}_+\times \Omega}$ defined on the canonical space $(\Omega,\mathcal{F})$. It will verify the three following items.
	\begin{enumerate}
		\item For every  $(s,\eta)\in\mathbbm{R}_+\times \Omega$, $\mathbbm{P}^{s,\eta}( \omega^s=\eta^s)=1$;
		\item for every $s\in\mathbbm{R}_+$ and $F\in\mathcal{F}$, the mapping
		\\
		$\begin{array}{ccl}
		\eta&\longmapsto& \mathbbm{P}^{s,\eta}(F)\\
		\Omega&\longrightarrow&[0,1]
		\end{array}$ is $\mathcal{F}^o_s$-measurable;
		\item for every  $(s,\eta)\in\mathbbm{R}_+\times \Omega$, $t\geq s$  and $F\in\mathcal{F}$,
		\begin{equation} \label{DE13}
			\mathbbm{P}^{s,\eta}(F|\mathcal{F}^o_t)(\omega)=\mathbbm{P}^{t,\omega}(F)\text{ for }\mathbbm{P}^{s,\eta}\text{ almost all }\omega.
		\end{equation}
	\end{enumerate} 
This implies in particular that for every  $(s,\eta)\in\mathbbm{R}_+\times \Omega$ and $t\geq s$, then $(\mathbbm{P}^{t,\omega})_{\omega\in\Omega}$ is a regular 
conditional expectation of $\mathbbm{P}^{s,\eta}$ by $\mathcal{F}^o_t$, see the Definition above Theorem 1.1.6 in \cite{stroock} for instance.
\\
\\
A path-dependent canonical class $(\mathbbm{P}^{s,\eta})_{(s,\eta)\in\mathbbm{R}_+\times \Omega}$ will be said to be \textbf{progressive} if for every $F\in\mathcal{F}$, the mapping 
$(t,\omega)\longmapsto \mathbbm{P}^{t,\omega}(F)$ is $\mathbbm{F}^o$-progressively measurable.
\end{definition}

Very often path-dependent canonical classes will always verify the following important hypothesis which is a reinforcement of \eqref{DE13}.
\begin{hypothesis}\label{HypClass}
	For every  $(s,\eta)\in\mathbbm{R}_+\times \Omega$, $t\geq s$  and $F\in\mathcal{F}$,
	\begin{equation} \label{DE14}
	\mathbbm{P}^{s,\eta}(F|\mathcal{F}_t)(\omega)=\mathbbm{P}^{t,\omega}(F)\text{ for }\mathbbm{P}^{s,\eta}\text{ almost all }\omega.
	\end{equation}
\end{hypothesis}

\begin{remark}\label{Borel}
	By approximation through simple functions, one can easily show the
following.
	
\begin{itemize}	
\item For $s\geq 0$ and random variable $Z$ we have that $\eta \longmapsto \mathbbm{E}^{s,\eta}[Z]$ is $\mathcal{F}^o_s$-measurable and for every  $(s,\eta)\in\mathbbm{R}_+\times \Omega$, $t\geq s$,
		$\mathbbm{E}^{s,\eta}[Z|\mathcal{F}^o_t](\omega)=\mathbbm{E}^{t,\omega}[Z]\text{ for }\mathbbm{P}^{s,\eta}\text{ almost all }\omega$, provided
	previous expectations  are finite;
\item 	if the  path-dependent canonical class is progressive,
$(t,\omega)\longmapsto
	\mathbbm{E}^{t,\omega}[Z]$ is $\mathbbm{F}^o$-progressively measurable, provided
	previous expectations are finite.
\end{itemize}
\end{remark}

\begin{notation}
 $\mathcal{B}_b(\Omega)$ stands for 
the set of real bounded measurable functions on $\Omega$.
Let $s\in\mathbbm{R}_+$, $\mathcal{B}^s_b(\Omega)$  will denote the set of real bounded $\mathcal{F}^o_s$-measurable functions on $\Omega$.
	We also denote by $\mathcal{B}^+_b(\Omega)$ the subset of r.v. $\phi\in\mathcal{B}_b(\Omega)$ such that $\phi(\omega)\geq 0$ for all $\omega\in\Omega$.
\end{notation}

\begin{definition}\label{DefCondOp}
\begin{enumerate}\
\item A linear map $Q: \mathcal{B}_b(\Omega) \rightarrow \mathcal{B}_b(\Omega)$
is said {\bf positivity preserving monotonic} $\mathcal{B}_b(\Omega)$ 
 if for every $\phi\in\mathcal{B}^+_b(\Omega)$ then 
$Q[\phi]\in\mathcal{B}^+_b(\Omega)$ and 
for every increasing converging (in the pointwise sense) sequence 
$f_n\underset{n}{\longrightarrow}f$ we have  that $Q[f_n]\underset{n}{\longrightarrow}P_s[f]$ in the pointwise sense.
\item 	A family $(P_s)_{s\in\mathbbm{R}_+}$ of positivity preserving monotonic linear operators on $\mathcal{B}_b(\Omega)$ 
will be called a \textbf{path-dependent system of projectors}  if it verifies the three following items.

	\begin{itemize}
		\item For all $s\in\mathbbm{R}_+$, the restriction of $P_s$ on $\mathcal{B}^s_b(\Omega)$
		 coincides with the identity;
		\item for all $s\in\mathbbm{R}_+$, $P_s$ maps $\mathcal{B}_b(\Omega)$ 
		into $\mathcal{B}^s_b(\Omega)$;
		\item for all $s,t\in\mathbbm{R}_+$ with $t\geq s$, $P_s\circ P_t=P_s$.
	\end{itemize}
\end{enumerate}
\end{definition}

	

 The proposition below states a correspondence between path-dependent canonical classes and path-dependent systems of projectors. 
\begin{proposition}\label{EqProbaOp}
The mapping 
\begin{equation}
(\mathbbm{P}^{s,\eta})_{(s,\eta)\in\mathbbm{R}_+\times \Omega}\longmapsto\left(\begin{array}{rcl}Z&\longmapsto& (\eta\mapsto\mathbbm{E}^{s,\eta}[Z])\\ \mathcal{B}_b(\Omega)&\longrightarrow& \mathcal{B}_b(\Omega)\end{array}\right)_{s\in\mathbbm{R}_+},
\end{equation}
is a bijection between the set of path-dependent system of probability measures and the set of path-dependent system of projectors.
\end{proposition}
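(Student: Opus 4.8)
The plan is to regard the stated map as $\Phi\colon(\mathbbm{P}^{s,\eta})_{(s,\eta)}\longmapsto(P_s)_{s\in\mathbbm{R}_+}$, where $P_s[Z](\eta):=\mathbbm{E}^{s,\eta}[Z]$, to check that $\Phi$ indeed takes values in the set of path-dependent systems of projectors, that it is injective, and that the recipe $\mathbbm{P}^{s,\eta}(F):=P_s[\mathbbm{1}_F](\eta)$ provides a two-sided inverse. First I would verify that $(P_s)_s:=\Phi\big((\mathbbm{P}^{s,\eta})\big)$ is a path-dependent system of projectors in the sense of Definition \ref{DefCondOp}. Linearity of each $P_s$ is linearity of the expectation; $\|P_s[\phi]\|_\infty\le\|\phi\|_\infty$, preservation of $\mathcal{B}^+_b(\Omega)$ and the monotone-convergence property all follow by applying the monotone convergence theorem pointwise in $\eta$, so $P_s$ is positivity preserving monotonic. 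That $P_s$ sends $\mathcal{B}_b(\Omega)$ into $\mathcal{B}^s_b(\Omega)$ is precisely the measurability statement in the first item of Remark \ref{Borel}. The ``restriction is the identity'' property holds because any $\phi\in\mathcal{B}^s_b(\Omega)$ factors through the stopping map $\omega\mapsto\omega^s$ (a standard functional monotone class argument, using that $\omega$ and $\omega^s$ agree on $[0,s]$), so that $\phi(\omega)=\phi(\omega^s)$ for every $\omega$ while $\omega^s=\eta^s$ for $\mathbbm{P}^{s,\eta}$-almost all $\omega$ by item 1 of Definition \ref{DefCondSyst}, whence $\mathbbm{E}^{s,\eta}[\phi]=\phi(\eta^s)=\phi(\eta)$, i.e. $P_s[\phi]=\phi$. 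Finally, for $t\ge s$ and $Z\in\mathcal{B}_b(\Omega)$, the first item of Remark \ref{Borel} identifies $P_t[Z](\omega)=\mathbbm{E}^{t,\omega}[Z]$ with $\mathbbm{E}^{s,\eta}[Z\,|\,\mathcal{F}^o_t](\omega)$ for $\mathbbm{P}^{s,\eta}$-almost all $\omega$, so the tower property gives $P_s\big[P_t[Z]\big](\eta)=\mathbbm{E}^{s,\eta}[Z]=P_s[Z](\eta)$, i.e. $P_s\circ P_t=P_s$.

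Injectivity of $\Phi$ is then immediate, since from $(P_s)_s$ one recovers $\mathbbm{P}^{s,\eta}(F)=\mathbbm{E}^{s,\eta}[\mathbbm{1}_F]=P_s[\mathbbm{1}_F](\eta)$ for every $F\in\mathcal{F}$. For surjectivity I would start from a path-dependent system of projectors $(P_s)_s$ and define $\mathbbm{P}^{s,\eta}(F):=P_s[\mathbbm{1}_F](\eta)$. For each fixed $(s,\eta)$ this is a probability measure on $(\Omega,\mathcal{F})$: finite additivity and $\mathbbm{P}^{s,\eta}(\emptyset)=0$ follow from linearity, $\sigma$-additivity from the monotone-convergence property applied to $\mathbbm{1}_{\bigcup_{k\le n}F_k}\uparrow\mathbbm{1}_{\bigcup_k F_k}$, and $\mathbbm{P}^{s,\eta}(\Omega)=P_s[1](\eta)=1$ because the constant $1$ lies in $\mathcal{B}^s_b(\Omega)$ and is therefore fixed by $P_s$. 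A simple-function-then-monotone-limit argument upgrades the definition to $\mathbbm{E}^{s,\eta}[Z]=P_s[Z](\eta)$ for all $Z\in\mathcal{B}_b(\Omega)$, which shows that $\Phi$ applied to the constructed family returns $(P_s)_s$; combined with the injectivity just proved, this yields that $\Phi$ is a bijection with inverse as claimed, once we check that the constructed family is a genuine path-dependent canonical class. Of the three items of Definition \ref{DefCondSyst}, item 1 holds because $\mathbbm{1}_{\{\omega^s=\eta^s\}}$ is $\mathcal{F}^o_s$-measurable and equals $1$ at $\eta$, hence is fixed by $P_s$ and evaluates to $1$ there; item 2 is exactly the statement that $P_s$ maps into $\mathcal{B}^s_b(\Omega)$.

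The delicate point, and the main obstacle, is item 3 of Definition \ref{DefCondSyst}, that is \eqref{DE13}: that $\omega\mapsto\mathbbm{P}^{t,\omega}(F)=P_t[\mathbbm{1}_F](\omega)$ is a version of $\mathbbm{P}^{s,\eta}(F\,|\,\mathcal{F}^o_t)$ for $t\ge s$. Since $P_t[\mathbbm{1}_F]$ is $\mathcal{F}^o_t$-measurable, it is enough, by the defining property of conditional expectation together with a monotone class reduction to indicators, to show $P_s\big[h\,P_t[\mathbbm{1}_F]\big]=P_s[h\,\mathbbm{1}_F]$ for every bounded $\mathcal{F}^o_t$-measurable $h$. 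The key lemma to establish first is the ``pull-out'' identity $P_t[h\,g]=h\,P_t[g]$ for $h$ bounded $\mathcal{F}^o_t$-measurable and $g\in\mathcal{B}_b(\Omega)$: for $h=\mathbbm{1}_A$ with $A\in\mathcal{F}^o_t$ and $g=\mathbbm{1}_B$, positivity together with $P_t[\mathbbm{1}_A]=\mathbbm{1}_A$ forces $0\le P_t[\mathbbm{1}_{A\cap B}]\le\mathbbm{1}_A$ and $0\le P_t[\mathbbm{1}_{A^c\cap B}]\le\mathbbm{1}_{A^c}$, and adding these while using $\mathbbm{1}_B=\mathbbm{1}_{A\cap B}+\mathbbm{1}_{A^c\cap B}$ and linearity yields $P_t[\mathbbm{1}_{A\cap B}]=\mathbbm{1}_A\,P_t[\mathbbm{1}_B]$; linearity and monotone limits extend this to general $h,g$. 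Granting the pull-out identity and using $P_s\circ P_t=P_s$, one gets $P_s\big[h\,P_t[\mathbbm{1}_F]\big]=P_s\big[P_t[h\,\mathbbm{1}_F]\big]=P_s[h\,\mathbbm{1}_F]$, which closes the argument; the remaining measurability checks and the monotone-class passages from indicators to bounded functions are routine.
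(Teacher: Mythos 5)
Your proof is correct, and it follows the natural route; note that the paper itself states Proposition \ref{EqProbaOp} without proof (it is recalled from the companion paper \cite{paperMPv2}), so there is no in-text argument to compare against. Your treatment of the two non-trivial points is sound: (i) the identity $P_s[\phi]=\phi$ on $\mathcal{B}^s_b(\Omega)$ via the factorization $\phi(\omega)=\phi(\omega^s)$ together with $\mathbbm{P}^{s,\eta}(\omega^s=\eta^s)=1$, and the tower property giving $P_s\circ P_t=P_s$ in the forward direction; and (ii), in the surjectivity direction, the ``pull-out'' identity $P_t[\mathds{1}_A g]=\mathds{1}_A P_t[g]$ for $A\in\mathcal{F}^o_t$, deduced from positivity, linearity and $P_t[\mathds{1}_A]=\mathds{1}_A$, which is exactly what converts the algebraic relation $P_s\circ P_t=P_s$ back into the conditioning property \eqref{DE13}; your indicator computation ($P_t[\mathds{1}_{A^c\cap B}]$ vanishes on $A$, $P_t[\mathds{1}_{A\cap B}]$ vanishes on $A^c$) is valid. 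Two points deserve one explicit line each if you write this up: the $\mathcal{F}^o_s$-measurability of the event $\{\omega^s=\eta^s\}$ is not an uncountable-intersection issue only because paths are cadlag (agreement on the rationals of $[0,s[$ plus at $s$ suffices by right-continuity); and in the extension of the pull-out identity to general bounded $h,g$ by monotone limits one should first treat $g\geq 0$ and then split $g=g^+-g^-$, since $h_n g$ is not monotone for signed $g$. These are routine, and the argument as a whole is complete.
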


\begin{definition}\label{ProbaOp}
From now on, two elements in correspondence through the previous bijection will be said to be \textbf{associated}. 
\end{definition}

\begin{notation} \label{N310}
Let $(P_s)_{s\in\mathbbm{R}_+}$ be a path-dependent system of projectors, and
 $(\mathbbm{P}^{s,\eta})_{(s,\eta)\in\mathbbm{R}_+\times \Omega}$ the associated path-dependent system of probability measures.
Then for any r.v. $\phi \in \mathcal{L}^1(\mathbbm{P}^{s,\eta})$, $P_s[\phi](\eta)$ will still denote the expectation of $\phi$ under $\mathbbm{P}^{s,\eta}$. In other words we extend the linear form $\phi\longmapsto P_s[\phi](\eta)$ from $\mathcal{B}_b(\Omega)$ to $\mathcal{L}^1(\mathbbm{P}^{s,\eta})$.
\end{notation}

For the results of the whole section, we are given a progressive path-dependent canonical class $(\mathbbm{P}^{s,\eta})_{(s,\eta)\in\mathbbm{R}_+\times \Omega}$ satisfying Hypothesis \ref{HypClass} and the corresponding path-dependent system of projectors $(P_s)_{s\in\mathbbm{R}_+}$.

\begin{notation}\label{CompletedBasis}
For any $(s,\eta)\in\mathbbm{R}_+\times \Omega$ we will consider the  stochastic basis $\left(\Omega,\mathcal{F}^{s,\eta},\mathbbm{F}^{s,\eta}:=(\mathcal{F}^{s,\eta}_t)_{t\in\mathbbm{R}_+},\mathbbm{P}^{s,\eta}\right)$ where $\mathcal{F}^{s,\eta}$ (resp. $\mathcal{F}^{s,\eta}_t$ for all $t$) is $\mathcal{F}$ (resp. $\mathcal{F}_t$) augmented with  the $\mathbbm{P}^{s,\eta}$ negligible sets. $\mathbbm{P}^{s,\eta}$ is extended to $\mathcal{F}^{s,\eta}$.
\end{notation}
We remark that, for any $(s,\eta)\in\mathbbm{R}_+\times \Omega$, $\left(\Omega,\mathcal{F}^{s,\eta},\mathbbm{F}^{s,\eta},\mathbbm{P}^{s,\eta}\right)$ is a stochastic basis fulfilling the usual conditions, see 1.4 in \cite{jacod} Chapter I. \\

\begin{proposition}\label{ConditionalExp}
Let $(s,\eta)\in\mathbbm{R}_+\times \Omega$ be fixed, $Z$ be a positive r.v. or in $\mathcal{L}^1(\mathbbm{P}^{s,\eta})$ and $t\geq s$. Then 
$\mathbbm{E}^{s,\eta}[Z|\mathcal{F}_t]=\mathbbm{E}^{s,\eta}[Z|\mathcal{F}^{s,\eta}_t]$ $\mathbbm{P}^{s,\eta}$ a.s.
\end{proposition}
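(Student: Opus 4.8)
The plan is to reduce the statement to the elementary fact that the augmented filtration $\mathcal{F}^{s,\eta}_t$ differs from $\mathcal{F}_t$ only by $\mathbbm{P}^{s,\eta}$-negligible sets, and then to invoke the defining property of conditional expectation together with its uniqueness.

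First I fix $(s,\eta)$ and write $\mathbbm{P}:=\mathbbm{P}^{s,\eta}$, $\mathbbm{E}:=\mathbbm{E}^{s,\eta}$, extended to $\mathcal{F}^{s,\eta}$ as in Notation \ref{CompletedBasis}; let $\mathcal{N}$ be the collection of subsets of $\mathbbm{P}$-null sets of $\mathcal{F}$, so that by construction $\mathcal{F}^{s,\eta}_t=\sigma(\mathcal{F}_t\cup\mathcal{N})$ (a collection not depending on $t$). The key lemma is: for every $A\in\mathcal{F}^{s,\eta}_t$ there is $A'\in\mathcal{F}_t$ with $\mathbbm{P}(A\triangle A')=0$. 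This follows by observing that $\mathcal{G}:=\{A\in\mathcal{F}^{s,\eta}:\exists A'\in\mathcal{F}_t,\ \mathbbm{P}(A\triangle A')=0\}$ is a $\sigma$-field: it is stable under complementation because $A^c\triangle (A')^c=A\triangle A'$, and under countable unions because $(\bigcup_n A_n)\triangle(\bigcup_n A'_n)\subseteq\bigcup_n(A_n\triangle A'_n)$. Since $\mathcal{G}$ trivially contains $\mathcal{F}_t$ (take $A'=A$) and contains $\mathcal{N}$ (take $A'=\emptyset$, as any $A\in\mathcal{N}$ is $\mathbbm{P}$-null), it contains $\sigma(\mathcal{F}_t\cup\mathcal{N})=\mathcal{F}^{s,\eta}_t$.

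Next I treat $Z\in\mathcal{L}^1(\mathbbm{P})$, which also covers the positive integrable case. Put $U:=\mathbbm{E}[Z|\mathcal{F}_t]$; it is $\mathcal{F}_t$-measurable, hence $\mathcal{F}^{s,\eta}_t$-measurable, and integrable. Given $A\in\mathcal{F}^{s,\eta}_t$, pick $A'\in\mathcal{F}_t$ as in the lemma; then $\mathds{1}_A=\mathds{1}_{A'}$ $\mathbbm{P}$-a.s., so
\begin{equation*}
\mathbbm{E}[U\mathds{1}_A]=\mathbbm{E}[U\mathds{1}_{A'}]=\mathbbm{E}[Z\mathds{1}_{A'}]=\mathbbm{E}[Z\mathds{1}_A],
\end{equation*}
the middle equality being the definition of $U$ since $A'\in\mathcal{F}_t$. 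Thus $U$ has the two characterizing properties of $\mathbbm{E}[Z|\mathcal{F}^{s,\eta}_t]$, whence $U=\mathbbm{E}[Z|\mathcal{F}^{s,\eta}_t]$ $\mathbbm{P}$-a.s. by uniqueness.

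Finally, for a general positive r.v. $Z$ I apply the previous step to $Z\wedge n$ and let $n\to\infty$, using conditional monotone convergence for both $\mathbbm{E}[\,\cdot\,|\mathcal{F}_t]$ and $\mathbbm{E}[\,\cdot\,|\mathcal{F}^{s,\eta}_t]$; the $\mathcal{L}^1$ case then follows from $Z=Z^+-Z^-$ and linearity. I do not anticipate a genuine obstacle: the only point requiring care is the identification $\mathcal{F}^{s,\eta}_t=\sigma(\mathcal{F}_t\cup\mathcal{N})$ together with the approximation lemma, everything else being the standard uniqueness argument for conditional expectations.
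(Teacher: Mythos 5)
Your proof is correct: the identification $\mathcal{F}^{s,\eta}_t=\sigma(\mathcal{F}_t\cup\mathcal{N})$, the approximation of any $A\in\mathcal{F}^{s,\eta}_t$ by some $A'\in\mathcal{F}_t$ up to a null set, and the verification of the defining property of conditional expectation is exactly the standard argument behind this statement, which the paper does not reprove but imports from its companion paper \cite{paperMPv2}. Nothing is missing; the monotone-convergence step for general positive $Z$ and the decomposition $Z=Z^+-Z^-$ are handled appropriately.
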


So  when considering conditional expectations, we will always drop the $(s,\eta)$ superscript on the filtration.
\begin{definition}\label{trivial}
Let $\mathcal{G}$ be a sub-$\sigma$-field of $\mathcal{F}$ and $\mathbbm{P}$ be a probability measure on $(\Omega,\mathcal{F})$, we say that $\mathcal{G}$ is \textbf{$\mathbbm{P}$-trivial} if for any element $G$ of $\mathcal{G}$,
 then $\mathbbm{P}(G)\in\{0,1\}$. 
\end{definition}

\begin{proposition}\label{CoroTrivial}
For every $(s,\eta)\in\mathbbm{R}_+\times\Omega$, $\mathcal{F}_s$ is $\mathbbm{P}^{s,\eta}$-trivial. In particular, an $\mathcal{F}^{s,\eta}_s$-measurable r.v. will be $\mathbbm{P}^{s,\eta}$-a.s. equal to a constant.
\end{proposition}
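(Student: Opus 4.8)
The plan is to establish the two assertions of Proposition \ref{CoroTrivial} in order: first the $\mathbbm{P}^{s,\eta}$-triviality of $\mathcal{F}_s$, then deduce the statement about $\mathcal{F}^{s,\eta}_s$-measurable random variables. The triviality of $\mathcal{F}_s$ should follow from the defining items of a path-dependent canonical class combined with the right-continuity built into $\mathbbm{F}$. First I would handle the initial $\sigma$-field $\mathcal{F}^o_s=\sigma(X_r\mid r\in[0,s])$: for any $F\in\mathcal{F}^o_s$, item 1 of Definition \ref{DefCondSyst} says $\mathbbm{P}^{s,\eta}(\omega^s=\eta^s)=1$, so on a set of full measure the path $\omega$ agrees with $\eta$ on $[0,s]$, hence $\mathbbm{1}_F(\omega)=\mathbbm{1}_F(\eta)$ $\mathbbm{P}^{s,\eta}$-a.s.; therefore $\mathbbm{P}^{s,\eta}(F)\in\{0,1\}$. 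This shows $\mathcal{F}^o_s$ is $\mathbbm{P}^{s,\eta}$-trivial.

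Next I would upgrade this from $\mathcal{F}^o_s$ to $\mathcal{F}_s=\bigcap_{u>s}\mathcal{F}^o_u$. The natural tool is Hypothesis \ref{HypClass} (or, if one prefers to avoid the augmented filtration at this stage, item 3 of Definition \ref{DefCondSyst} together with a limiting argument). Fix $F\in\mathcal{F}_s\subset\mathcal{F}$. By \eqref{DE14} applied with $t=s$ we have $\mathbbm{P}^{s,\eta}(F\mid\mathcal{F}_s)(\omega)=\mathbbm{P}^{s,\omega}(F)$ for $\mathbbm{P}^{s,\eta}$-a.a.\ $\omega$. On the other hand, since $F\in\mathcal{F}_s$, the left-hand side equals $\mathbbm{1}_F(\omega)$ $\mathbbm{P}^{s,\eta}$-a.s. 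Hence $\mathbbm{1}_F(\omega)=\mathbbm{P}^{s,\omega}(F)$ for $\mathbbm{P}^{s,\eta}$-a.a.\ $\omega$, so $\mathbbm{P}^{s,\eta}$-a.s.\ the quantity $\mathbbm{P}^{s,\omega}(F)$ takes values in $\{0,1\}$. Integrating against $\mathbbm{P}^{s,\eta}$ and using that $\eta\mapsto\mathbbm{P}^{s,\eta}(F)$ is $\mathcal{F}^o_s$-measurable (item 2) combined with the $\mathcal{F}^o_s$-triviality already proved, one gets $\mathbbm{P}^{s,\eta}(F)=\mathbbm{E}^{s,\eta}[\mathbbm{P}^{s,\omega}(F)]$, which is the expectation of a $\{0,1\}$-valued random variable that is $\mathbbm{P}^{s,\eta}$-a.s.\ constant; thus $\mathbbm{P}^{s,\eta}(F)\in\{0,1\}$. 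Alternatively, and perhaps more cleanly, one observes directly that $\mathbbm{P}^{s,\eta}(F)=\int\mathbbm{P}^{s,\omega}(F)\,\mathbbm{P}^{s,\eta}(d\omega)=\int\mathbbm{1}_F(\omega)\,\mathbbm{P}^{s,\eta}(d\omega)$ and that $\mathbbm{1}_F=\mathbbm{1}_F^2$ forces $\mathbbm{P}^{s,\eta}(F)=\mathbbm{P}^{s,\eta}(F)^2$ once one knows $\mathbbm{1}_F$ is a.s.\ deterministic.

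Finally, for the second assertion: let $\phi$ be $\mathcal{F}^{s,\eta}_s$-measurable, where $\mathcal{F}^{s,\eta}_s$ is $\mathcal{F}_s$ augmented by the $\mathbbm{P}^{s,\eta}$-negligible sets. Then $\phi$ is $\mathbbm{P}^{s,\eta}$-a.s.\ equal to an $\mathcal{F}_s$-measurable random variable $\tilde\phi$. Since every event $\{\tilde\phi\le a\}$, $a\in\mathbbm{R}$, lies in $\mathcal{F}_s$ and hence has $\mathbbm{P}^{s,\eta}$-probability in $\{0,1\}$, the distribution function $a\mapsto\mathbbm{P}^{s,\eta}(\tilde\phi\le a)$ is $\{0,1\}$-valued, nondecreasing and right-continuous with limits $0$ and $1$, so it jumps from $0$ to $1$ at a single point $c:=\inf\{a:\mathbbm{P}^{s,\eta}(\tilde\phi\le a)=1\}$ (which is finite because $\tilde\phi$ is real-valued); then $\mathbbm{P}^{s,\eta}(\tilde\phi=c)=1$, hence $\phi=c$ $\mathbbm{P}^{s,\eta}$-a.s. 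I expect the only real subtlety to be the passage from $\mathcal{F}^o_s$ to the right-continuous $\mathcal{F}_s$ — making sure Hypothesis \ref{HypClass} (rather than merely \eqref{DE13}) or an equivalent right-continuity argument is invoked, since the conditioning $\sigma$-field in \eqref{DE14} is precisely $\mathcal{F}_t$ with $t=s$ — while the rest is a routine measure-theoretic wrap-up.
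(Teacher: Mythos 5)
Your argument is correct. There is in fact no proof in this paper to compare against: Proposition \ref{CoroTrivial} is recalled from the companion paper \cite{paperMPv2}, and your route is the natural one — triviality of $\mathcal{F}^o_s$ from $\mathbbm{P}^{s,\eta}(\omega^s=\eta^s)=1$, the upgrade to $\mathcal{F}_s$ via Hypothesis \ref{HypClass} applied with $t=s$ (legitimate, since that hypothesis is a standing assumption of the section) combined with the $\mathcal{F}^o_s$-measurability of $\omega\longmapsto\mathbbm{P}^{s,\omega}(F)$ from item 2 of Definition \ref{DefCondSyst}, and finally the distribution-function argument for the augmented $\sigma$-field. The only step you use silently is the Galmarino-type fact that for $F\in\mathcal{F}^o_s$ one has $\mathds{1}_F(\omega)=\mathds{1}_F(\omega^s)$ for all $\omega$ (a monotone class argument over the generators $\{X_r\in B\}$, $r\leq s$), which is standard and unproblematic.
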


The last notions and results of this subsection are taken from Subsection 5.3 of \cite{paperMPv2}.

From now on we are given a non-decreasing continuous function $V$ and a couple $(\mathcal{D}(A),A)$ verifying the following.
\begin{hypothesis}\label{HypDA}
\begin{enumerate}\
\item $\mathcal{D}(A)$ is a linear subspace of the space of    $\mathbbm{F}^o$-progressively measurable processes;
\item $A$ is a linear mapping from $\mathcal{D}(A)$ into the space of  $\mathbbm{F}^o$-progressively measurable processes;
\item for all $\Phi\in\mathcal{D}(A)$, $\omega\in\Omega$, $t\geq 0$, $\int_0^t|A\Phi_r(\omega)|dV_r<+\infty$;
\item for all $\Phi\in\mathcal{D}(A)$, $(s,\eta)\in\mathbbm{R}_+\times \Omega$ and 
	$t\in[s,+\infty[$, we have  
	\\
	$\mathbbm{E}^{s,\eta}\left[\int_{s}^{t}|A(\Phi)_r|dV_r\right]<+\infty$ and $\mathbbm{E}^{s,\eta}[|\Phi_t|]<+\infty$.
\end{enumerate}
\end{hypothesis}

Inspired from the classical literature (see 13.28 in \cite{jacod}) we introduce the following notion of a weak  generator.
\begin{definition}\label{WeakGen} 
	We say that $(\mathcal{D}(A),A)$ is a \textbf{weak generator} of a path-dependent system of projectors $(P_s)_{s\in\mathbbm{R}_+}$ if for all $\Phi\in\mathcal{D}(A)$, $(s,\eta)\in\mathbbm{R}_+\times \Omega$ and 
	$t\in[s,+\infty[$, we have
	\begin{equation}
	P_s[\Phi_t](\eta)=\Phi_s(\eta)+\int_s^tP_s[A(\Phi)_r](\eta)dV_r.
	\end{equation}
\end{definition}

\begin{definition}\label{MPop}
\begin{enumerate}\
\item
	$(\mathbbm{P}^{s,\eta})_{(s,\eta)\in\mathbbm{R}_+\times \Omega}$ will be said to solve the \textbf{martingale problem associated to }$(\mathcal{D}(A),A)$ if for every $(s,\eta)\in\mathbbm{R}_+\times\Omega$, 
	\begin{itemize}
		\item $\mathbbm{P}^{s,\eta}(\omega^s=\eta^s)=1$;
		\item $\Phi-\int_0^{\cdot}A(\Phi)_rdr$,  is on $[s,+\infty[$ a $(\mathbbm{P}^{s,\eta},\mathbbm{F}^o)$-martingale
 for all $\Phi\in\mathcal{D}(A)$.
	\end{itemize} 
\item The martingale problem associated to $(\mathcal{D}(A),A)$ will be said to be \textbf{well-posed} if for every  $(s,\eta)\in\mathbbm{R}_+\times\Omega$ there exists a unique $\mathbbm{P}^{s,\eta}$ verifying both items above.
\end{enumerate}
\end{definition}

\begin{proposition}\label{MPopWellPosed}
 $(\mathcal{D}(A),A)$ is a weak generator of  $(P_s)_{s\in\mathbbm{R}_+}$ if
and only if $(\mathbbm{P}^{s,\eta})_{(s,\eta)\in\mathbbm{R}_+\times \Omega}$ solves the martingale problem associated to $(\mathcal{D}(A),A)$.
	
	In particular, if $(\mathbbm{P}^{s,\eta})_{(s,\eta)\in\mathbbm{R}_+\times \Omega}$ solves the well-posed martingale problem associated to  $(\mathcal{D}(A),A)$ then $(P_s)_{s\in\mathbbm{R}_+}$ is the unique path-dependent system of projectors for which $(\mathcal{D}(A),A)$ is a weak generator.
\end{proposition}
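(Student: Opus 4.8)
The plan is to prove the equivalence by showing that, for each fixed $(s,\eta)$ and each $\Phi\in\mathcal{D}(A)$, the martingale property on $[s,+\infty[$ under $\mathbbm{P}^{s,\eta}$ of the process $M^{\Phi}:=\Phi-\int_0^{\cdot}A(\Phi)_r\,dV_r$ is equivalent, essentially term by term, to the weak-generator identity of Definition \ref{WeakGen} for that same $\Phi$ and $(s,\eta)$. The only non-elementary input is the translation, provided by Remark \ref{Borel} together with \eqref{DE13}, between the conditional expectation $\mathbbm{E}^{s,\eta}[\,\cdot\mid\mathcal{F}^o_t]$ and the operators $\mathbbm{E}^{t,\omega}[\,\cdot\,]=P_t[\,\cdot\,](\omega)$; everything else is bookkeeping with Hypothesis \ref{HypDA} and Fubini. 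I would first record that $M^{\Phi}$ is $\mathbbm{F}^o$-adapted and, for $t\ge s$, integrable under $\mathbbm{P}^{s,\eta}$: indeed $\mathbbm{E}^{s,\eta}[|\Phi_t|]<\infty$ and $\mathbbm{E}^{s,\eta}[\int_s^t|A(\Phi)_r|\,dV_r]<\infty$ by Hypothesis \ref{HypDA}(4), while $\int_0^sA(\Phi)_r\,dV_r$ is, $\mathbbm{P}^{s,\eta}$-a.s., the finite constant $\int_0^sA(\Phi)_r(\eta)\,dV_r$ because each $A(\Phi)_r$, $r\le s$, is $\mathcal{F}^o_r$-measurable and $\mathbbm{P}^{s,\eta}(\omega^s=\eta^s)=1$.

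For the direction ``martingale problem $\Rightarrow$ weak generator'', I would take expectations in $\mathbbm{E}^{s,\eta}[M^{\Phi}_t\mid\mathcal{F}^o_s]=M^{\Phi}_s$ to get $\mathbbm{E}^{s,\eta}[M^{\Phi}_t]=\mathbbm{E}^{s,\eta}[M^{\Phi}_s]$. Using that $\Phi_s$ and $(A(\Phi)_r)_{r\le s}$ are $\mathcal{F}^o_s$-measurable, hence $\mathbbm{P}^{s,\eta}$-a.s.\ equal to their values at $\eta$ and fixed by $P_s$, and applying Fubini (Hypothesis \ref{HypDA}(4), plus Notation \ref{N310}) to write $\mathbbm{E}^{s,\eta}[\int_0^tA(\Phi)_r\,dV_r]=\int_0^tP_s[A(\Phi)_r](\eta)\,dV_r$, one obtains $P_s[\Phi_t](\eta)-\int_0^tP_s[A(\Phi)_r](\eta)\,dV_r=\Phi_s(\eta)-\int_0^sP_s[A(\Phi)_r](\eta)\,dV_r$; rearranging and using $\int_0^t-\int_0^s=\int_s^t$ yields exactly the identity of Definition \ref{WeakGen}.

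For the converse ``weak generator $\Rightarrow$ martingale problem'', fix $s\le t\le u$ and compute $\mathbbm{E}^{s,\eta}[M^{\Phi}_u\mid\mathcal{F}^o_t]$. By Remark \ref{Borel}/\eqref{DE13} this equals, for $\mathbbm{P}^{s,\eta}$-a.a.\ $\omega$, $\mathbbm{E}^{t,\omega}[M^{\Phi}_u]=\mathbbm{E}^{t,\omega}[\Phi_u]-\mathbbm{E}^{t,\omega}[\int_0^uA(\Phi)_r\,dV_r]$. I split the last integral at $t$: the piece $\int_0^tA(\Phi)_r\,dV_r$ is $\mathcal{F}^o_t$-measurable, hence $\mathbbm{P}^{t,\omega}$-a.s.\ equal to the constant $\int_0^tA(\Phi)_r(\omega)\,dV_r$ since $\mathbbm{P}^{t,\omega}(\omega'^t=\omega^t)=1$; for the piece $\int_t^u$, Fubini (Hypothesis \ref{HypDA}(4)) gives $\mathbbm{E}^{t,\omega}[\int_t^uA(\Phi)_r\,dV_r]=\int_t^uP_t[A(\Phi)_r](\omega)\,dV_r$; and the weak-generator identity of Definition \ref{WeakGen}, applied with initial time $t$, gives $P_t[\Phi_u](\omega)-\int_t^uP_t[A(\Phi)_r](\omega)\,dV_r=\Phi_t(\omega)$. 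Combining, $\mathbbm{E}^{s,\eta}[M^{\Phi}_u\mid\mathcal{F}^o_t]=\Phi_t-\int_0^tA(\Phi)_r\,dV_r=M^{\Phi}_t$, $\mathbbm{P}^{s,\eta}$-a.s., which is the martingale property; the condition $\mathbbm{P}^{s,\eta}(\omega^s=\eta^s)=1$ is built into the definition of a path-dependent canonical class. This establishes the equivalence.

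Finally, for the uniqueness assertion: if the martingale problem associated to $(\mathcal{D}(A),A)$ is well-posed, let $(P'_s)_{s\in\mathbbm{R}_+}$ be any path-dependent system of projectors admitting $(\mathcal{D}(A),A)$ as a weak generator. By Proposition \ref{EqProbaOp} it is associated to a path-dependent canonical class $(\mathbbm{Q}^{s,\eta})_{(s,\eta)}$, which by the equivalence just proved solves the martingale problem associated to $(\mathcal{D}(A),A)$; well-posedness then forces $\mathbbm{Q}^{s,\eta}=\mathbbm{P}^{s,\eta}$ for every $(s,\eta)$, hence $P'_s=P_s$ for all $s$. I expect the main obstacle to be not conceptual but a matter of bookkeeping in the converse direction — carefully isolating the terms that are already $\mathcal{F}^o_t$-measurable (so that conditioning, i.e.\ passing to $\mathbbm{P}^{t,\omega}$, leaves them unchanged) and justifying each Fubini interchange through Hypothesis \ref{HypDA} — since the Markov-type reduction of $\mathbbm{E}^{s,\eta}[\,\cdot\mid\mathcal{F}^o_t]$ to the family $(\mathbbm{P}^{t,\omega})$ is already available from the companion results recalled above.
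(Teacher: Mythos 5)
Your argument is correct and is, in substance, the intended one: the present paper states Proposition \ref{MPopWellPosed} without proof (it is recalled from the companion paper \cite{paperMPv2}), and the natural proof is exactly what you wrote — translate conditional expectations under $\mathbbm{P}^{s,\eta}$ into $\mathbbm{E}^{t,\omega}[\cdot]=P_t[\cdot](\omega)$ via \eqref{DE13} and Remark \ref{Borel}, treat the $\mathcal{F}^o_t$-measurable pieces as $\mathbbm{P}^{t,\omega}$-a.s.\ constants, justify the interchanges by Hypothesis \ref{HypDA}, and deduce the uniqueness assertion from well-posedness through the bijection of Proposition \ref{EqProbaOp}. The only point worth making explicit in your last step is that applying ``weak generator $\Rightarrow$ martingale problem'' to the class $(\mathbbm{Q}^{s,\eta})$ associated with $(P'_s)$ uses the integrability of item 4.\ of Hypothesis \ref{HypDA} under $\mathbbm{Q}^{s,\eta}$ (implicit in the very meaning of $P'_s[\Phi_t](\eta)$ and of the $dV$-integral through Notation \ref{N310}), and not only under the originally fixed family.
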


Indeed, the last statement allows to associate analytically  to $(\mathcal{D}(A),A)$ a unique
 path-dependent system of projectors $(P_s)_{s\in\mathbbm{R}_+}$ through Definition \ref{WeakGen}.

\subsection{Path-dependent martingale additive functionals}\label{S1b}

We now recall the notion of Path-dependent Martingale Additive Functionals
that we use in the paper. This was introduced in \cite{paperMPv2} and can
 be conceived as a path-dependent extension of the notion of non-homogeneous Martingale Additive Functionals of a Markov processes developed in  \cite{paperAF}. In this subsection, all results come from Section 4 in \cite{paperMPv2}.
 In this subsection we are again given a progressive path-dependent canonical class $(\mathbbm{P}^{s,\eta})_{(s,\eta)\in\mathbbm{R}_+\times \Omega}$ satisfying Hypothesis \ref{HypClass} and the corresponding path-dependent system of projectors $(P_s)_{s\in\mathbbm{R}_+}$.

\begin{definition}\label{DefAF}
	On $(\Omega,\mathcal{F})$, a \textbf{path-dependent Martingale Additive Functional}, in short path-dependent MAF will be  a real-valued  random-field  
	$M:=(M_{t,u})_{0\leq t\leq u}$
	verifying the two following conditions.
	\begin{enumerate}
		\item For any $0\leq t\leq u$, $M_{t,u}$ is $\mathcal{F}^o_{u}$-measurable;
		\item for any $(s,\eta)\in\mathbbm{R}_+\times \Omega$,
 there exists a real cadlag $(\mathbbm{P}^{s,\eta},\mathbbm{F}^{s,\eta})$- martingale $M^{s,\eta}$ (taken equal to zero on $[0,s]$ by convention) such that for any $\eta\in \Omega$ and $s\leq t\leq u$, 
		\begin{equation*}
		M_{t,u} = M^{s,\eta}_u-M^{s,\eta}_t \,\text{  }\, \mathbbm{P}^{s,\eta}\text{ a.s.}
		\end{equation*}
	\end{enumerate}
	
	$M^{s,\eta}$ will be called the \textbf{cadlag version of $M$ under} $\mathbbm{P}^{s,\eta}$.
	
	A path-dependent MAF will be said to verify a certain property 
	(being square integrable, having angular bracket absolutely continuous with respect to some non-decreasing function)  if under any $\mathbbm{P}^{s,\eta}$ its cadlag version verifies it.
\end{definition}

\begin{proposition}\label{CoroMaf}
Let $(\mathcal{D}(A), A)$ be a weak generator of $(P_s)_{s\in\mathbbm{R}_+}$ 
and  $(s,\eta)  \in {\mathbbm R}_+ \times \Omega.$ 
Then for every $\Phi\in\mathcal{D}(A)$, $\Phi-\int_0^{\cdot}A(\Phi)_rdV_r$ admits for all $(s,\eta)$ on $[s,+\infty[$ a $\mathbbm{P}^{s,\eta}$ version $M[\Phi]^{s,\eta}$ which is a $(\mathbbm{P}^{s,\eta},\mathbbm{F}^{s,\eta})$-cadlag martingale. In particular, the random field defined by $M[\Phi]_{t,u}(\omega):=\Phi_u(\omega)-\Phi_t(\omega)-\int_t^uA\Phi_r(\omega)dV_r$ defines a MAF with cadlag version $M[\Phi]^{s,\eta}$ under $\mathbbm{P}^{s,\eta}$.
\end{proposition}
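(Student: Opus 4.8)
The plan is to deduce the statement directly from the characterization of weak generators as solutions of the martingale problem (Proposition \ref{MPopWellPosed}), combined with the additive-functional machinery of \cite{paperMPv2}. Fix $\Phi\in\mathcal{D}(A)$. By Proposition \ref{MPopWellPosed}, since $(\mathcal{D}(A),A)$ is a weak generator of $(P_s)_{s\in\mathbbm{R}_+}$, the family $(\mathbbm{P}^{s,\eta})$ solves the martingale problem associated to $(\mathcal{D}(A),A)$; hence for every $(s,\eta)$ the process $N^{\Phi}:=\Phi-\int_0^{\cdot}A(\Phi)_r\,dV_r$ is, on $[s,+\infty[$, a $(\mathbbm{P}^{s,\eta},\mathbbm{F}^o)$-martingale. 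Here I would first note that Hypothesis \ref{HypDA}(3)--(4) guarantee that the integral $\int_0^{t}A(\Phi)_r(\omega)\,dV_r$ is well-defined and finite for every $\omega$ and has finite $\mathbbm{P}^{s,\eta}$-expectation, so $N^{\Phi}$ is genuinely an integrable process and the martingale statement is meaningful.

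The next step is to pass from the raw (a priori only $\mathbbm{F}^o$-adapted, not necessarily cadlag) martingale $N^{\Phi}$ to a cadlag modification adapted to the completed filtration $\mathbbm{F}^{s,\eta}$. This is the standard martingale-regularization argument: the stochastic basis $(\Omega,\mathcal{F}^{s,\eta},\mathbbm{F}^{s,\eta},\mathbbm{P}^{s,\eta})$ fulfills the usual conditions (as recalled after Notation \ref{CompletedBasis}), and the filtration $\mathbbm{F}$ is right-continuous by construction, so by the usual cadlag-version theorem for martingales on a filtration satisfying the usual conditions, there exists a cadlag $(\mathbbm{P}^{s,\eta},\mathbbm{F}^{s,\eta})$-martingale $M[\Phi]^{s,\eta}$ on $[s,+\infty[$ which is a $\mathbbm{P}^{s,\eta}$-modification of $N^{\Phi}$ there; we extend it to be zero on $[0,s]$ by convention. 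One should check that the right-continuity of $t\mapsto \mathbbm{E}^{s,\eta}[N^{\Phi}_t\mid\mathcal{F}_t]$ in $L^1$ holds, which follows from right-continuity of $V$ and of the filtration together with the integrability in Hypothesis \ref{HypDA}; this legitimizes the regularization.

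It then remains to verify that the random field $M[\Phi]_{t,u}(\omega):=\Phi_u(\omega)-\Phi_t(\omega)-\int_t^uA\Phi_r(\omega)\,dV_r$ satisfies the two defining conditions of a path-dependent MAF (Definition \ref{DefAF}) with $M[\Phi]^{s,\eta}$ as cadlag version. Condition (1), $\mathcal{F}^o_u$-measurability of $M[\Phi]_{t,u}$, is immediate from $\mathbbm{F}^o$-progressive measurability of $\Phi$ and $A\Phi$ (Hypothesis \ref{HypDA}(1)--(2)) and the fact that $r\mapsto\int_t^r A\Phi\,dV$ is $\mathbbm{F}^o$-adapted. Condition (2) is exactly the identity $M[\Phi]_{t,u}=M[\Phi]^{s,\eta}_u-M[\Phi]^{s,\eta}_t$ $\mathbbm{P}^{s,\eta}$-a.s.\ for $s\le t\le u$, which holds because both sides equal $N^{\Phi}_u-N^{\Phi}_t$ up to $\mathbbm{P}^{s,\eta}$-null sets — the right-hand side since $M[\Phi]^{s,\eta}$ is a modification of $N^{\Phi}$ on $[s,+\infty[$, and one must only invoke that $M[\Phi]_{t,u}$ does not depend on the choice of $(s,\eta)$ with $s\le t$, a consistency fact that follows from Hypothesis \ref{HypClass} (equivalently from the definition of the path-dependent canonical class). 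I expect the only mildly delicate point to be this last consistency/independence-of-$(s,\eta)$ check and the precise invocation of the regularization theorem; everything else is bookkeeping with the hypotheses already in place.
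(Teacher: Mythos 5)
A preliminary remark: the paper itself contains no proof of Proposition \ref{CoroMaf}; it is recalled from the companion paper \cite{paperMPv2} (the subsection states explicitly that all its results come from there). So your proposal can only be judged against what the standing assumptions permit. Your overall route --- weak generator $\Rightarrow$ martingale problem via Proposition \ref{MPopWellPosed}, so that $N^{\Phi}:=\Phi-\int_0^{\cdot}A(\Phi)_r\,dV_r$ is on $[s,+\infty[$ a $(\mathbbm{P}^{s,\eta},\mathbbm{F}^o)$-martingale, then a cadlag regularization on the completed basis, then a pathwise verification of the two conditions of Definition \ref{DefAF} --- is the natural one, and the integrability bookkeeping through Hypothesis \ref{HypDA} is correct.

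There is, however, a genuine gap at the regularization step. Doob's cadlag-modification theorem applies to martingales with respect to a filtration satisfying the usual conditions, whereas at that point you only know the $(\mathbbm{P}^{s,\eta},\mathbbm{F}^o)$-martingale property, and $\mathbbm{F}^o$ is neither right-continuous nor complete. Upgrading the martingale property from $\mathbbm{F}^o$ to $\mathbbm{F}$ (and then to $\mathbbm{F}^{s,\eta}$) is not automatic for a process not yet known to be right-continuous: for $t<u$, backward martingale convergence only gives $\mathbbm{E}^{s,\eta}\left[\mathbbm{E}^{s,\eta}[N^{\Phi}_u|\mathcal{F}_t]\,\middle|\,\mathcal{F}^o_t\right]=N^{\Phi}_t$ a.s., which does not yield $\mathbbm{E}^{s,\eta}[N^{\Phi}_u|\mathcal{F}_t]=N^{\Phi}_t$. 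This is precisely where Hypothesis \ref{HypClass} (which forces $\mathbbm{E}^{s,\eta}[\,\cdot\,|\mathcal{F}_t]=\mathbbm{E}^{s,\eta}[\,\cdot\,|\mathcal{F}^o_t]$ $\mathbbm{P}^{s,\eta}$-a.s.) and Proposition \ref{ConditionalExp} (passage to $\mathcal{F}^{s,\eta}_t$) must be invoked; once $N^{\Phi}$ is an $\mathbbm{F}^{s,\eta}$-martingale, regularization applies simply because $t\mapsto\mathbbm{E}^{s,\eta}[N^{\Phi}_t]$ is constant --- your condition on right-continuity of $t\mapsto\mathbbm{E}^{s,\eta}[N^{\Phi}_t\mid\mathcal{F}_t]$ in $L^1$ is not the relevant one and suggests a confusion about what the theorem requires. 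By contrast, the place where you do invoke Hypothesis \ref{HypClass} --- an alleged consistency of $M[\Phi]_{t,u}$ in $(s,\eta)$ --- is a non-issue: that random field is defined pathwise and does not depend on $(s,\eta)$ at all. So the plan is salvageable, but the one step that genuinely needs the structure of the path-dependent canonical class is the one you passed over.
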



\begin{proposition}\label{bracketMAFs}
	Let $M$ and $N$ be two square integrable path-dependent MAFs and let $M^{s,\eta}$ (respectively $N^{s,\eta}$) be the cadlag version of $M$ (respectively $N$) under a fixed $\mathbbm{P}^{s,\eta}$. Assume that $N$ has an angular bracket absolutely continuous with respect to $V$ (introduced above Hypothesis \ref{HypDA}).
%
	Then there exists an $\mathbbm{F}^o$-progressively measurable process  $k$  such that for any $(s,\eta)\in\mathbbm{R}_+\times \Omega$,
	\begin{equation*}
	\langle M^{s,\eta},N^{s,\eta}\rangle =  \int_s^{\cdot\vee s}k_rdV_r.
	\end{equation*}
\end{proposition}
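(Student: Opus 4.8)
The plan is to construct the process $k$ as a suitable "density" of the angular bracket, uniformly in $(s,\eta)$, by exploiting the additive structure of the two MAFs together with the conditioning property \eqref{DE14} of the path-dependent canonical class. First I would fix an arbitrary reference initial condition, say $(0,\omega_0)$ for some $\omega_0\in\Omega$, and observe that under $\mathbbm{P}^{0,\omega_0}$ the cadlag martingales $M^{0,\omega_0},N^{0,\omega_0}$ are square integrable with $\langle N^{0,\omega_0}\rangle\ll V$; hence the predictable covariation $\langle M^{0,\omega_0},N^{0,\omega_0}\rangle$ is also absolutely continuous with respect to $V$ (by Kunita--Watanabe, since $d\langle M,N\rangle$ is dominated in the Kunita--Watanabe sense by $d\langle M\rangle^{1/2}d\langle N\rangle^{1/2}$ and the $N$-part is $\ll dV$). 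Writing it as $\int_0^{\cdot}\tilde k_r\,dV_r$ for some $\mathbbm{F}^{0,\omega_0}$-predictable $\tilde k$, the first task is to replace $\tilde k$ by an $\mathbbm{F}^o$-progressively measurable process that does not depend on the chosen $(0,\omega_0)$; here one uses that $\mathcal{P}re^o$-measurable modifications exist (the predictable $\sigma$-field of the raw filtration is rich enough, as $\mathcal{F}$ is separable), and that two such candidates must agree $dV\otimes d\mathbbm{P}^{s,\eta}$-a.e.

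Next I would transfer this to an arbitrary $(s,\eta)$. The key point is that, by Proposition \ref{ConditionalExp} and Hypothesis \ref{HypClass}, for $t\geq s$ the family $(\mathbbm{P}^{t,\omega})_\omega$ is a regular conditional distribution of $\mathbbm{P}^{s,\eta}$ given $\mathcal{F}_t$, so that the martingale property and the bracket computations are stable under this conditioning: the restriction of $M$ (resp. $N$) to $[s,\infty[$ under $\mathbbm{P}^{s,\eta}$, stopped and shifted, matches the cadlag versions under the $\mathbbm{P}^{t,\omega}$'s. Concretely, using the additive-functional identity $M_{t,u}=M^{s,\eta}_u-M^{s,\eta}_t=M^{t,\omega}_u-M^{t,\omega}_t$ a.s. (and likewise for $N$), and the fact that the angular bracket of a sum of orthogonal-type increments adds up, one deduces that for $s\le a\le b$,
\begin{equation*}
\langle M^{s,\eta},N^{s,\eta}\rangle_b-\langle M^{s,\eta},N^{s,\eta}\rangle_a=\mathbbm{E}^{s,\eta}\big[M_{a,b}N_{a,b}-\textstyle\sum(\Delta\text{-terms})\,\big|\,\mathcal{F}_a\big],
\end{equation*}
which, via Remark \ref{Borel}, is expressed through the operators $P_a$ and hence is "the same functional" regardless of $(s,\eta)$. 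This forces the density to be a common $\mathbbm{F}^o$-progressively measurable $k$, and the relation $\langle M^{s,\eta},N^{s,\eta}\rangle=\int_s^{\cdot\vee s}k_r\,dV_r$ on $[s,\infty[$ (the lower limit $s$ and the $\cdot\vee s$ reflecting that $M^{s,\eta},N^{s,\eta}$ vanish on $[0,s]$ by the convention in Definition \ref{DefAF}).

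The main obstacle I anticipate is the measurability/uniqueness bookkeeping: one must produce a single process $k$ measurable with respect to the raw progressive $\sigma$-field $\mathcal{P}ro^o$ (not merely with respect to each completed $\mathbbm{F}^{s,\eta}$), and check it is genuinely independent of $(s,\eta)$ on the overlap of their time domains. This is exactly the kind of argument carried out for the bracket of a single MAF in \cite{paperMPv2}, Section 4, so I would reduce to that case by polarization: apply the single-MAF result (the analogue of this proposition with $M=N$) to $M$, to $N$, and to $M+N$, obtaining progressively measurable densities $k^M,k^N,k^{M+N}$ of $\langle M\rangle,\langle N\rangle,\langle M+N\rangle$ with respect to $V$ — the hypothesis "$\langle N\rangle\ll V$" is what lets $M+N$ and $M$ be handled even if a priori only $N$'s bracket is controlled, once one notes $\langle M,N\rangle$ is automatically $\ll V$ — and then set $k:=\tfrac12(k^{M+N}-k^M-k^N)$. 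The remaining work is to verify that the exceptional $dV$-null sets can be chosen uniformly, which again follows from the canonical-class structure and separability of $\mathcal{F}$. Everything else is routine stochastic calculus.
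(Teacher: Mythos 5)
Your proposal has genuine gaps, and they sit exactly at the heart of the statement (which, incidentally, this paper only recalls from Section 4 of \cite{paperMPv2} without reproducing the proof). The difficult point is not the absolute continuity of $\langle M^{s,\eta},N^{s,\eta}\rangle$ — your Kunita--Watanabe argument for that is fine — but the existence of a \emph{single} $\mathbbm{F}^o$-progressively measurable density $k$ valid simultaneously for every $(s,\eta)$. Your first step fixes one reference law $\mathbbm{P}^{0,\omega_0}$, extracts a predictable density $\tilde k$ under it, and then asserts one can pass to a raw-filtration version ``that does not depend on the chosen $(0,\omega_0)$'' because any two candidates agree $dV\otimes d\mathbbm{P}^{s,\eta}$-a.e. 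This cannot work as stated: since $\mathbbm{P}^{s,\eta}(\omega^s=\eta^s)=1$, the measures attached to different initial paths are mutually singular, so a density constructed under one of them carries no information off its support and no modification/uniqueness argument glues the family into one process. The universal $k$ has to be \emph{constructed} from the additive random field itself (e.g.\ showing first that $\langle M,N\rangle$ is the cadlag version of a bounded-variation path-dependent AF and then taking a pointwise limsup of difference quotients against $V$, combined with Lebesgue differentiation under each $\mathbbm{P}^{s,\eta}$); your conditional-expectation display is also not a correct substitute, since the predictable bracket increment $\langle M^{s,\eta},N^{s,\eta}\rangle_b-\langle M^{s,\eta},N^{s,\eta}\rangle_a$ is not $\mathcal{F}_a$-measurable and is not given pointwise by $\mathbbm{E}^{s,\eta}[M_{a,b}N_{a,b}-\cdots\mid\mathcal{F}_a]$; the compensator is only characterized through such expectations in a projection sense.

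The polarization reduction in your last paragraph also fails under the stated hypotheses. The proposition assumes only that $\langle N\rangle$ is absolutely continuous with respect to $V$; nothing is assumed on $\langle M\rangle$. Hence the single-MAF result you want to invoke applies to $N$ but not to $M$ nor to $M+N$ (note $\langle M+N\rangle=\langle M\rangle+2\langle M,N\rangle+\langle N\rangle$ is absolutely continuous with respect to $V$ if and only if $\langle M\rangle$ is), so $k^M$ and $k^{M+N}$ need not exist and $k:=\tfrac12(k^{M+N}-k^M-k^N)$ is not defined. Observing that $\langle M,N\rangle\ll V$ does not repair this, because that covariation is a \emph{signed} bounded-variation functional and is precisely the object for which you must prove the universal Radon--Nikodym property; it cannot be obtained by citing the increasing, single-MAF case. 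What is needed is a Radon--Nikodym-type lemma for path-dependent AFs with bounded variation whose total variation is dominated by $dV$ under every $\mathbbm{P}^{s,\eta}$, proved by an explicit, measure-independent construction of the density — this is the content of the companion paper's Section 4 that your sketch takes for granted.
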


\begin{notation}\label{RadonAF}
The process $k$ whose existence is stated in 
Proposition \ref{bracketMAFs}
 will be denoted $\frac{d\langle M,N\rangle}{dV}$.
\end{notation}

\section{BSDEs and abstract analytical problem}
 \label{S2}


\subsection{BSDEs driven by a path-dependent MAF}
\label{S2Prelim}

We keep using Notation \ref{canonicalspace}.
We fix a progressive path-dependent canonical class 
$(\mathbbm{P}^{s,\eta})_{(s,\eta)\in\mathbbm{R}_+\times \Omega}$  verifying Hypothesis \ref{HypClass}, and $(P_s)_{s\in\mathbbm{R}_+}$ the associated path-dependent system of projectors.
 $(\mathbbm{P}^{s,\eta})_{(s,\eta)\in\mathbbm{R}_+\times \Omega}$
will model the forward process evolution in the BSDEs.

In this section, we fix $T > 0$ and a non-decreasing continuous function 
\\$V:[0,T]\longmapsto \mathbbm{R}_+$. 
By convention, any process (resp. function) $Y$ defined on $[0,T]\times\Omega$ (resp. $[0,T]$) will be extended taking value $Y_T$ after time $T$.

\begin{notation}\label{L2uni}
For a fixed $(s,\eta)\in[0,T]\times\Omega$, we denote by $dV\otimes\mathbbm{P}^{s,\eta}$ the measure on $\mathcal{B}([s,T])\otimes\mathcal{F}$ defined by $dV\otimes\mathbbm{P}^{s,\eta}(C)=\mathbbm{E}^{s,\eta}\left[\int_s^T\mathds{1}_C(r,\omega)dV_r\right]$. For any $p\in \mathbbm{N}^*$ we denote by $\mathcal{L}^p(dV\otimes\mathbbm{P}^{s,\eta})$  the space of $(\mathcal{F}^{s,\eta}_t)_{t\in[s,T]}$-progressively measurable processes $Y$ such that $\|Y\|_{p,s,\eta}:=\left(\mathbbm{E}^{s,\eta}\left[\int_s^{T} |Y_r|^pdV_r\right]\right)^{\frac{1}{p}} < \infty$. By a slight abuse of notation we will also say that a process indexed by $[0,T]$ belongs to $\mathcal{L}^p(dV\otimes\mathbbm{P}^{s,\eta})$ if its restriction to $[s,T]\times\Omega$ does.

$\mathcal{H}^2_0(\mathbbm{P}^{s,\eta})$ will denote the space of $(\mathbbm{P}^{s,\eta},(\mathcal{F}^{s,\eta}_t)_{t\in[0,T]})$-square integrable martingales vanishing at time $s$, hence on the interval  $[0,s]$ since $\mathcal{F}_s$ is $\mathbbm{P}^{s,\eta}$-trivial, see Proposition \ref{CoroTrivial}; they will be
 defined up to indistinguishability.

For any $p\geq 1$, we define $\mathcal{L}^p_{uni}$ as the linear space of $\mathbbm{F}^o$-progressively measurable processes such that for all $(s,\eta)\in[0,T]\times\Omega$, $(Y_t)_{t\in[s,T]}$ belongs to $\mathcal{L}^p(dV\otimes d\mathbbm{P}^{s,\eta})$.
Let  $\mathcal{N}$ be the linear subspace of $\mathcal{L}^p_{uni}$ constituted of elements which are equal to $0$ $dV\otimes d\mathbbm{P}^{s,\eta}$ a.e. for all $(s,\eta)\in[0,T]\times\Omega$. We denote  $L^p_{uni}:=\mathcal{L}^p_{uni}\backslash \mathcal{N}$.
 $L^p_{uni}$ can be equipped with the topology generated by the family of semi-norms $\left(\|\cdot\|_{p,s,\eta}\right)_{(s,\eta)\in[0,T]\times \Omega}$ which makes it a separate locally convex topological vector space,
see Theorem 5.76 in \cite{aliprantis}.
\end{notation}

\begin{definition}\label{zeropotential} 
A  set $C\in \mathcal{P}ro^o$ will be said to be
 {\bf of zero potential} if $\mathds{1}_C\in\mathcal{N}$, meaning that $\mathbbm{E}^{s,\eta}\left[\int_s^{T} \mathds{1}_C(t,\omega)dV_t\right]=0$ for all $(s,\eta)$ and equivalently that $\mathds{1}_C$ is equal to $0$ in $L^2_{uni}$.
 
 A property holding everywhere in $[0,T]\times\Omega$ except on a set of zero potential will said to hold \textbf{quasi surely} abbreviated by q.s.
\end{definition}
\begin{remark}

The terminology \textit{zero potential} is inspired from classical potential theory in the Markovian setup, whereas the terminology \textit{quasi surely} comes from the theory of capacities and is justified by the fact $A$ is of zero potential iff $\underset{(s,\eta)\in[0,T]\times\Omega}{\text{sup }}dV\otimes d\mathbbm{P}^{s,\eta}(A\cap([s,T]\times\Omega))=0$.
\end{remark}

We now fix some some $d\in\mathbbm{N}^*$ and $d$  square integrable  path-dependent MAFs (see Definition \ref{DefAF}) $(N^1_{t,u})_{0\leq t\leq u},\cdots,(N^d_{t,u})_{0\leq t\leq u}$ with cadlag versions $N^{s,\eta}:=(N^{1,s,\eta},\cdots, N^{d,s,\eta})$ under $\mathbbm{P}^{s,\eta}$ for fixed $(s,\eta)$. 
\begin{definition}\label{DrivingMAF}
	$N:=(N^1,\cdots,N^d)$ will be called the \textbf{driving MAF}.
\end{definition}

In relation to this driving MAF we introduce the following hypothesis, which will be in force for the rest of the section.
\begin{hypothesis}\label{HypN}
For every 
integer $i$ such that $1 \le i \le d$, $N^i$ has an angular bracket which is absolutely continuous with respect to $V$ (see Definition \ref{DefAF}) and $\frac{d\langle N^i\rangle}{dV}$ (see Notation \ref{RadonAF}) is q.s. bounded.
\end{hypothesis}

We consider some $\xi,f$ verifying the following hypothesis.
\begin{hypothesis}\label{HypBSDE}\
	\begin{enumerate}
		\item $\xi$ is an  $\mathcal{F}_T$-measurable r.v. which belongs to $\mathcal{L}^2(\mathbbm{P}^{s,\eta})$ for every $(s,\eta)$;
		\item $f:([0,T]\times\Omega)\times\mathbbm{R}\times\mathbbm{R}^d\longmapsto \mathbbm{R}$ is measurable with respect to $\mathcal{P}ro^o\otimes\mathcal{B}(\mathbbm{R})\otimes\mathcal{B}(\mathbbm{R}^d)$ and such that 
		\begin{enumerate}
			\item $f(\cdot,\cdot,0,0)\in\mathcal{L}^2_{uni}$;
			\item there exists $K>0$ such that for all $(t,\omega,y,y',z,z')\in [0,T]\times\Omega\times\mathbbm{R}\times\mathbbm{R}\times\mathbbm{R}^d\times\mathbbm{R}^d$
			\begin{equation}
			|f(t,\omega,y',z')-f(t,\omega,y,z)|\leq K(|y'-y|+\|z'-z\|).
			\end{equation}
		\end{enumerate}
	\end{enumerate}
\end{hypothesis}

An immediate application of Theorem 3.3 and Remark 3.4 in \cite{paper3} is the following existence and uniqueness theorem.
\begin{theorem} \label{T45}
Assume the validity of  Hypotheses \ref{HypN}, \ref{HypBSDE}. For every $(s,\eta)\in[0,T]\times\Omega$, there exists a unique couple of processes $(Y^{s,\eta},M^{s,\eta})\in\mathcal{L}^2(dV\otimes\mathbbm{P}^{s,\eta})\times\mathcal{H}^2_0(\mathbbm{P}^{s,\eta})$ with $Y^{s,\eta}$ cadlag, such that on $[s,T]$
\begin{equation} \label{ET45}
Y^{s,\eta}_{\cdot}=\xi+\int_{\cdot}^{T}f\left(r,\cdot,Y^{s,\eta}_r,\frac{d\langle M^{s,\eta},N^{s,\eta}\rangle_r}{dV_r}\right)dV_r-(M^{s,\eta}_T-M^{s,\eta}_{\cdot}),
\end{equation}
in the sense of indistinguishability, under probability $\mathbbm{P}^{s,\eta}$.
\end{theorem}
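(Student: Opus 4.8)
The plan is to reduce the statement to the general existence-and-uniqueness theorem for BSDEs driven by a cadlag martingale established in \cite{paper3} (Theorem 3.3 and Remark 3.4 there), applied separately on each stochastic basis $\left(\Omega,\mathcal{F}^{s,\eta},\mathbbm{F}^{s,\eta},\mathbbm{P}^{s,\eta}\right)$. First I would fix $(s,\eta)\in[0,T]\times\Omega$ and recall, from the remark following Notation \ref{CompletedBasis}, that this basis fulfills the usual conditions, so that the abstract framework of \cite{paper3} literally applies. The driving martingale will be $N^{s,\eta}=(N^{1,s,\eta},\dots,N^{d,s,\eta})$, the cadlag version of the driving MAF $N$ under $\mathbbm{P}^{s,\eta}$; it lies in $\mathcal{H}^2_0(\mathbbm{P}^{s,\eta})$ by Definition \ref{DefAF} (square integrability of $N$) together with the convention that cadlag versions of a MAF vanish on $[0,s]$.

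Next I would check that the data $(\xi,f,N^{s,\eta})$ satisfy the hypotheses of the cited theorem. By Hypothesis \ref{HypBSDE}(1), $\xi\in\mathcal{L}^2(\mathbbm{P}^{s,\eta})$; by Hypothesis \ref{HypBSDE}(2a) and the definition of $\mathcal{L}^2_{uni}$, the process $f(\cdot,\cdot,0,0)$ restricted to $[s,T]\times\Omega$ belongs to $\mathcal{L}^2(dV\otimes\mathbbm{P}^{s,\eta})$; and Hypothesis \ref{HypBSDE}(2b) provides the Lipschitz property of $f$ in $(y,z)$, uniformly in $(t,\omega)$, with a constant $K$ independent of $(s,\eta)$. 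Finally, Hypothesis \ref{HypN} says that each $\langle N^{i,s,\eta}\rangle$ is absolutely continuous with respect to $V$ with density $\frac{d\langle N^i\rangle}{dV}$ which is q.s. bounded, hence in particular $dV\otimes\mathbbm{P}^{s,\eta}$-essentially bounded; this is exactly the boundedness condition on the driving martingale required by \cite{paper3}, and it also guarantees, via Kunita--Watanabe, that for any $M^{s,\eta}\in\mathcal{H}^2_0(\mathbbm{P}^{s,\eta})$ the covariation $\langle M^{s,\eta},N^{s,\eta}\rangle$ is absolutely continuous with respect to $V$, so that the term $\frac{d\langle M^{s,\eta},N^{s,\eta}\rangle_r}{dV_r}$ appearing in \eqref{ET45} is well defined.

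With these verifications in place, Theorem 3.3 and Remark 3.4 of \cite{paper3} yield, for the fixed $(s,\eta)$, a unique pair $(Y^{s,\eta},M^{s,\eta})\in\mathcal{L}^2(dV\otimes\mathbbm{P}^{s,\eta})\times\mathcal{H}^2_0(\mathbbm{P}^{s,\eta})$ with $Y^{s,\eta}$ cadlag and $\mathbbm{P}^{s,\eta}$-indistinguishable on $[s,T]$ from a solution of \eqref{ET45}; since $(s,\eta)$ was arbitrary, this proves the theorem. I do not expect a genuine obstacle: the only point requiring care is the bookkeeping that translates the uniform/quasi-sure formulations of Hypotheses \ref{HypN} and \ref{HypBSDE} into the single-basis hypotheses of \cite{paper3}, together with recalling why the augmented basis satisfies the usual conditions so that the cited result is directly applicable.
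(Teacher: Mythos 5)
Your proposal is correct and follows exactly the route the paper itself takes: the paper states Theorem \ref{T45} as an immediate application of Theorem 3.3 and Remark 3.4 of \cite{paper3} on each completed basis $\left(\Omega,\mathcal{F}^{s,\eta},\mathbbm{F}^{s,\eta},\mathbbm{P}^{s,\eta}\right)$, with the correspondence $BSDE^{s,\eta}(f,\xi)=BSDE(\xi,f,V,N^{s,\eta})$ noted in Remark \ref{RPicard}. Your verification of the hypotheses (usual conditions, square integrability of $\xi$ and $f(\cdot,\cdot,0,0)$, uniform Lipschitz property, and the q.s. bounded density $\frac{d\langle N^i\rangle}{dV}$ from Hypothesis \ref{HypN}) is precisely the bookkeeping the paper leaves implicit.
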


\begin{notation} \label{N36}
	For the rest of this section, at fixed $(s,\eta)\in[0,T]\times\Omega$,
 the previous equation will be denoted $BSDE^{s,\eta}(f,\xi)$. Its unique solution will be denoted $(Y^{s,\eta},M^{s,\eta})$ and we will use the notation $Z^{s,\eta}:=(Z^{1,s,\eta},\cdots,Z^{d,s,\eta}) := \left(\frac{d\langle M^{s,\eta},N^{s,\eta}\rangle_t}{dV_t}\right)_{t\in[s,T]}$.
\end{notation}

\begin{remark} \label{RPicard}
	We emphasize  that equation $BSDE^{s,\eta}(f,\xi)$ of the present paper corresponds to equation $BSDE(\xi,f,V,N^{s,\eta})$ in \cite{paper3}.
\end{remark}

The following proposition can be seen as a path-dependent extension of Theorem 5.19 in \cite{paper3}. Its proof is similar to the one in the Markovian setup and is therefore postponed to the Appendix. 
\begin{proposition}\label{Defuv}  
Assume the validity of  Hypotheses \ref{HypN}, \ref{HypBSDE}.
For any $(s,\eta)$ let $(Y^{s,\eta}, M^{s,\eta})$ be as introduced at Theorem
\ref{T45}.
There exists a unique process $Y\in\mathcal{L}^2_{uni}$, a  square integrable path-dependent MAF $(M_{t,u})_{0\leq t\leq u}$ and
 $Z^1,\cdots,Z^d\in\mathcal{L}^2_{uni}$ unique up to zero potential sets (see Definition \ref{zeropotential})
such that for all $(s,\eta)\in[0,T]\times \Omega$ the following holds.
	\begin{enumerate}
		\item $Y^{s,\eta}$ is  on $[s,T]$ a $\mathbbm{P}^{s,\eta}$-modification of $Y$;
	\item $M^{s,\eta}$ is the cadlag version of $M$ under
 $\mathbbm{P}^{s,\eta}$;
\item For all integers  $ i \in \{1, \ldots,d\}$,
 $Z^i=\frac{d\langle  M^{s,\eta},N^{i,s,\eta}\rangle}{dV}$ $dV\otimes\mathbbm{P}^{s,\eta}$ a.e.
\end{enumerate}
Moreover, $Y$ is given by $Y:(s,\eta)\longmapsto Y^{s,\eta}_s$.
\end{proposition}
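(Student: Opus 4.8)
The plan is to construct the deterministic objects $Y$, $(M_{t,u})$ and $Z^1,\dots,Z^d$ directly from the family of solutions $(Y^{s,\eta},M^{s,\eta})$ and then verify the flow/consistency property that lets them be patched together. First I would define $Y:(s,\eta)\longmapsto Y^{s,\eta}_s$ and check measurability: by Proposition \ref{CoroTrivial} the r.v. $Y^{s,\eta}_s$ is $\mathbbm{P}^{s,\eta}$-a.s. constant, so the definition makes sense, and its $\mathcal{F}^o$-progressive measurability should follow from Remark \ref{Borel} once $Y^{s,\eta}_s$ is expressed as a Picard limit of conditional expectations of expressions built from $\xi$, $f$ and the brackets $\frac{d\langle M^{s,\eta},N^{i,s,\eta}\rangle}{dV}$, using Proposition \ref{bracketMAFs} (which already furnishes an $\mathbbm{F}^o$-progressively measurable version of each bracket) together with the progressivity of the canonical class. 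Then $Z^i$ is defined to be exactly that $\mathbbm{F}^o$-progressive process $\frac{d\langle M,N^i\rangle}{dV}$ coming from Proposition \ref{bracketMAFs} applied to the MAF $M$ once the latter is constructed.

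The key step is the \emph{flow property}: for $(s,\eta)$ fixed and $s\le t$, one shows that $Y^{t,\omega}$ is, for $\mathbbm{P}^{s,\eta}$-almost every $\omega$, a $\mathbbm{P}^{t,\omega}$-modification of the restriction of $Y^{s,\eta}$ to $[t,T]$, and correspondingly that $M^{s,\eta}_u-M^{s,\eta}_t = M^{t,\omega}_u$ for $u\ge t$, $\mathbbm{P}^{s,\eta}$-a.s. This is where Hypothesis \ref{HypClass} enters decisively: conditioning $BSDE^{s,\eta}(f,\xi)$ on $\mathcal{F}_t$ and using \eqref{DE14} (regular conditional distribution is $\mathbbm{P}^{t,\omega}$) turns the conditioned equation into $BSDE^{t,\omega}(f,\xi)$, so by the uniqueness in Theorem \ref{T45} the conditioned solution must coincide with $(Y^{t,\omega},M^{t,\omega})$ for a.e. $\omega$. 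I would invoke here the path-dependent analogue of Theorem 5.19 of \cite{paper3}, exactly as the statement advertises — the argument is the Markovian one transported verbatim. In particular, taking $t=s$ in the flow property gives that $Y^{s,\eta}$ on $[s,T]$ is a $\mathbbm{P}^{s,\eta}$-modification of the process $(Y^{r,\cdot})$ evaluated appropriately, i.e. of $Y$ itself in the sense of item 1.

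From the flow property, the random field $M_{t,u}(\omega):=M^{t,\omega}_u$ (for $t\le u$) is well defined up to the relevant null sets, is $\mathcal{F}^o_u$-measurable by progressivity of the class, and satisfies the additivity relation $M_{t,u}=M^{s,\eta}_u-M^{s,\eta}_t$ $\mathbbm{P}^{s,\eta}$-a.s. for $s\le t\le u$, hence is a square integrable path-dependent MAF in the sense of Definition \ref{DefAF} with cadlag version $M^{s,\eta}$ under $\mathbbm{P}^{s,\eta}$; square integrability is inherited from $M^{s,\eta}\in\mathcal{H}^2_0(\mathbbm{P}^{s,\eta})$. Items 2 and 3 then follow: item 2 by construction, and item 3 because $\frac{d\langle M^{s,\eta},N^{i,s,\eta}\rangle}{dV}$ is, by Proposition \ref{bracketMAFs} and Notation \ref{RadonAF}, a single $\mathbbm{F}^o$-progressive process agreeing $dV\otimes\mathbbm{P}^{s,\eta}$-a.e. with the cadlag bracket for every $(s,\eta)$. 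Finally uniqueness: if $(\tilde Y,\tilde M,\tilde Z^i)$ is another such triple, then for each $(s,\eta)$ the process $\tilde Y^{s,\eta}$ must solve $BSDE^{s,\eta}(f,\xi)$ (its generator term involves $\tilde Z^i=\frac{d\langle\tilde M^{s,\eta},N^{i,s,\eta}\rangle}{dV}$), so by uniqueness in Theorem \ref{T45} it is indistinguishable from $(Y^{s,\eta},M^{s,\eta})$ under $\mathbbm{P}^{s,\eta}$; evaluating at time $s$ forces $\tilde Y=Y$ everywhere, $\tilde M$ and $M$ to have the same cadlag version under every $\mathbbm{P}^{s,\eta}$ hence to be equal as MAFs, and $\tilde Z^i=Z^i$ up to zero potential.

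I expect the main obstacle to be the rigorous justification of the flow/conditioning step — making precise that the regular conditional law of $\mathbbm{P}^{s,\eta}$ given $\mathcal{F}_t$ transforms $BSDE^{s,\eta}(f,\xi)$ restricted to $[t,T]$ into exactly $BSDE^{t,\omega}(f,\xi)$, for $\mathbbm{P}^{s,\eta}$-a.e. $\omega$, uniformly enough to conclude via uniqueness. One must check that the stochastic integral $(M^{s,\eta}_T-M^{s,\eta}_t)$ and the driving brackets behave correctly under conditioning (the MAF structure of $N^i$ and Proposition \ref{CoroMaf} are tailored for this), and that the null exceptional sets can be chosen not to depend on the uncountably many time points — this is handled, as in the Markovian case, by working with a fixed progressively measurable version and right-continuity. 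Everything else is bookkeeping with the already-established machinery.
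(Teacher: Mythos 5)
Your overall architecture (define $Y_s(\eta):=Y^{s,\eta}_s$, build a MAF out of the martingale parts, identify $Z^i$ via Proposition \ref{bracketMAFs}) points in the right direction, but the step you yourself flag as the main obstacle --- the flow/conditioning property for the \emph{nonlinear} BSDE --- is exactly the step that is not routine, and the paper is structured precisely so as never to need it. Two concrete problems. First, to say that the restriction of $(Y^{s,\eta},M^{s,\eta})$ to $[t,T]$ ``solves $BSDE^{t,\omega}(f,\xi)$'' you must make sense of the driver term $\frac{d\langle M^{s,\eta},N^{s,\eta}\rangle}{dV}$ under $\mathbbm{P}^{t,\omega}$: the angular bracket is a predictable compensator computed with respect to $(\mathbbm{P}^{s,\eta},\mathbbm{F}^{s,\eta})$, and $Y^{s,\eta},M^{s,\eta}$ are only defined up to $\mathbbm{P}^{s,\eta}$-indistinguishability and measurable with respect to the $\mathbbm{P}^{s,\eta}$-completed filtration. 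Showing that, for $\mathbbm{P}^{s,\eta}$-a.e.\ $\omega$, these objects are versions of the corresponding objects under $\mathbbm{P}^{t,\omega}$ (so that uniqueness in Theorem \ref{T45} applies) is a genuine aggregation problem, not bookkeeping; Hypothesis \ref{HypClass} alone does not hand it to you. Second, the definition $M_{t,u}(\omega):=M^{t,\omega}_u(\omega)$ is not well posed pointwise: $M^{t,\omega}$ is determined only $\mathbbm{P}^{t,\omega}$-a.s.\ and is $\mathcal{F}^{t,\omega}_u$-measurable, whereas Definition \ref{DefAF} requires $M_{t,u}$ to be $\mathcal{F}^o_u$-measurable in $(t,u,\omega)$ jointly consistent fashion; you never produce such a version. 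Likewise the progressive measurability of $(s,\eta)\mapsto Y^{s,\eta}_s$ is only gestured at (``expressed as a Picard limit'') and not established.

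The paper's proof avoids all of this by combining Picard iteration with a linearization lemma (Lemma \ref{L41}). For a driver $\tilde f\in\mathcal{L}^2_{uni}$ that does not depend on $(y,z)$, the solution is the conditional expectation $\mathbbm{E}^{s,\eta}[\xi+\int_s^T\tilde f_r dV_r]$, so the deterministic functional $\tilde Y$ is defined directly and its progressive measurability follows from Remark \ref{Borel} and Lemma \ref{LemmaBorel}; the MAF is then written explicitly as $\tilde M_{t,u}=\tilde Y_{u\wedge T}-\tilde Y_{t\wedge T}-\int_{t\wedge T}^{u\wedge T}\tilde f_r dV_r$, which is manifestly $\mathcal{F}^o_u$-measurable, and $\tilde Z^i$ comes from Proposition \ref{bracketMAFs}. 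The nonlinear case is reduced to this one: by induction (Proposition \ref{P511}) each Picard iterate admits functional versions $(Y^k,Z^k)$, one passes to $\bar Y:=\limsup_k Y^k$, $\bar Z:=\limsup_k Z^k$ using the a.e.\ convergence of Proposition \ref{cvYZk}, and then applies Lemma \ref{L41} one last time with the frozen driver $\tilde f(t,\omega)=f(t,\omega,\bar Y_t(\omega),\bar Z_t(\omega))$. In this scheme the only ``flow property'' ever used is the elementary identity $\mathbbm{E}^{s,\eta}[\,\cdot\,|\mathcal{F}_t](\omega)=\mathbbm{E}^{t,\omega}[\,\cdot\,]$ of Remark \ref{Borel}, applied to fixed integrable functionals --- no conditioning of brackets or of stochastic integrals, and no uniqueness argument under $\mathbbm{P}^{t,\omega}$, is required. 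To repair your proof you would either have to supply the r.c.p.d.\ stability of the bracket and a measurable aggregation of $(M^{t,\omega})_{t,\omega}$, or switch to the iteration-plus-linearization route.
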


\subsection{Decoupled mild solutions for abstract operators}\label{S2b}
 
In this subsection, we assume that we are given some $(\mathcal{D}(A),A)$ satisfying Hypothesis \ref{HypDA} and being a weak generator of $(P_s)_{s\in\mathbbm{R}_+}$. We recall that by Proposition \ref{MPopWellPosed}, this implies that $(\mathbbm{P}^{s,\eta})_{(s,\eta)\in\mathbbm{R}_+\times \Omega}$ solves that martingale problem associated to $(\mathcal{D}(A),A)$.
By convention we will assume that every $\Phi\in\mathcal{D}(A)$ is constant after time $T$ (meaning $\Phi=\Phi_{\cdot\wedge T}$) and $A\Phi=0$ on $]T,+\infty[$.

\begin{notation} \label{N514}
For every $\Phi\in\mathcal{D}(A)$ we introduce the MAF (see Proposition \ref{CoroMaf}) 
\begin{equation}\label{MafPhi}
M[\Phi]_{t,u}(\omega):=
\Phi_u(\omega)-\Phi_t(\omega)-\int_t^uA\Phi_r(\omega)dV_r.
\end{equation}

For all $(s,\eta)$  we denote by $M[\Phi]^{s,\eta}$ its cadlag version. 

We also denote $\Phi^{s,\eta}:=\Phi_s(\eta)+\int_s^{s\vee\cdot}(A\Phi)_rdV_r + M[\Phi]^{s,\eta}$.
\end{notation}
\begin{remark}\label{RMafPhi}
By Proposition \ref{CoroMaf}, $M[\Phi]^{s,\eta}$ is a cadlag $(\mathbbm{P}^{s,\eta},\mathbbm{F}^{s,\eta})$-martingale which is a $\mathbbm{P}^{s,\eta}$-version of $\Phi_{s\vee\cdot\wedge T}-\Phi_s(\eta) -\int_s^{s\vee\cdot}(A\Phi)_rdV_r$ and therefore $\Phi^{s,\eta}$ is a cadlag special semimartingale which is a $\mathbbm{P}^{s,\eta}$-version of $\Phi$ on $[s,T]$.
\end{remark}

\begin{notation}\label{NotGamma}
	Let $\Phi,\Psi\in\mathcal{D}(A)$ be such that $\Phi\Psi\in\mathcal{D}(A)$. We denote by $\Gamma(\Phi,\Psi)$ the process $A(\Phi\Psi)-\Phi A(\Psi)-\Psi A(\Phi)$. If $\Phi$ or $\Psi$ is multidimensional, then we define 
 $\Gamma(\Phi,\Psi)$ as a vector or matrix, coordinate by coordinate.
	
	When it exists, $\Gamma(\Phi,\Phi)$ will be denoted $\Gamma(\Phi)$.
\end{notation}
$\Gamma$ can be interpreted as a path-dependent extension of the concept of carré du champ operator in the theory of Markov processes.

\begin{lemma}\label{bracketGammabis}
	Let $\Phi,\Psi\in \mathcal{D}(A)$ be such that  $\Phi\Psi\in\mathcal{D}(A)$ and assume that both $M[\Psi], M[\Phi]$ are square integrable MAFs. Then for any  $(s,\eta)\in[0,T]\times \Omega$ we have $\langle M^{s,\eta}[\Phi],M[\Psi]^{s,\eta}\rangle = \int_s^{s\vee\cdot\wedge T}\Gamma(\Phi,\Psi)dV_r$ in the sense of $\mathbbm{P}^{s,\eta}$-indistinguishability. In particular, $\Gamma(\Phi,\Psi)=\frac{d\langle M[\Phi],M[\Psi]\rangle}{dV}$ q.s., see Notation \ref{RadonAF}.
	
\end{lemma}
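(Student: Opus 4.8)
The plan is to exploit the fact that, by Remark \ref{RMafPhi}, the process $\Phi^{s,\eta}$ is a $\mathbbm{P}^{s,\eta}$-version of the cadlag special semimartingale with canonical decomposition $\Phi^{s,\eta} = \Phi_s(\eta) + \int_s^{s\vee\cdot}(A\Phi)_r dV_r + M[\Phi]^{s,\eta}$, and similarly for $\Psi$ and for the product $\Phi\Psi$ (which lies in $\mathcal{D}(A)$ by hypothesis). First I would compute the quadratic covariation $[\Phi^{s,\eta},\Psi^{s,\eta}]$ in two ways. On the one hand, since the bounded variation parts $\int_s^{s\vee\cdot}(A\Phi)_r dV_r$ and $\int_s^{s\vee\cdot}(A\Psi)_r dV_r$ contribute nothing to the quadratic covariation (they are continuous of finite variation against a local martingale, and finite-variation against finite-variation), we get $[\Phi^{s,\eta},\Psi^{s,\eta}] = [M[\Phi]^{s,\eta},M[\Psi]^{s,\eta}]$; since both martingales are square integrable, the predictable angular bracket $\langle M[\Phi]^{s,\eta},M[\Psi]^{s,\eta}\rangle$ exists and is the predictable compensator of this. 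On the other hand, by the integration-by-parts formula for semimartingales, $\Phi^{s,\eta}\Psi^{s,\eta} = \Phi_s(\eta)\Psi_s(\eta) + \int_s^{\cdot}\Phi^{s,\eta}_{r-}d\Psi^{s,\eta}_r + \int_s^{\cdot}\Psi^{s,\eta}_{r-}d\Phi^{s,\eta}_r + [\Phi^{s,\eta},\Psi^{s,\eta}]$.

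Next I would compare this with the semimartingale decomposition of $(\Phi\Psi)^{s,\eta}$, which by Remark \ref{RMafPhi} applied to $\Phi\Psi\in\mathcal{D}(A)$ equals $(\Phi\Psi)_s(\eta) + \int_s^{s\vee\cdot}A(\Phi\Psi)_r dV_r + M[\Phi\Psi]^{s,\eta}$, and which is a $\mathbbm{P}^{s,\eta}$-version of $\Phi\Psi$ on $[s,T]$, hence $\mathbbm{P}^{s,\eta}$-indistinguishable from $\Phi^{s,\eta}\Psi^{s,\eta}$ on $[s,T]$. Matching the two decompositions of this special semimartingale and using uniqueness of the canonical decomposition of a special semimartingale: the predictable finite-variation part of $\Phi^{s,\eta}\Psi^{s,\eta}$ from the integration-by-parts identity is $\int_s^{\cdot}\Phi^{s,\eta}_{r-}(A\Psi)_r dV_r + \int_s^{\cdot}\Psi^{s,\eta}_{r-}(A\Phi)_r dV_r + \langle M[\Phi]^{s,\eta},M[\Psi]^{s,\eta}\rangle$ (the compensator of $[\Phi^{s,\eta},\Psi^{s,\eta}]$), while from the other decomposition it is $\int_s^{s\vee\cdot}A(\Phi\Psi)_r dV_r$. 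Equating these two predictable finite-variation processes and using that $\Phi^{s,\eta}_{r-} = \Phi_{r}$ and $\Psi^{s,\eta}_{r-}=\Psi_r$ $dV\otimes\mathbbm{P}^{s,\eta}$-a.e. (these are versions of $\Phi,\Psi$, and $dV$ is continuous so left limits agree $dV$-a.e.) yields $\langle M[\Phi]^{s,\eta},M[\Psi]^{s,\eta}\rangle = \int_s^{s\vee\cdot\wedge T}\big(A(\Phi\Psi)_r - \Phi_r A(\Psi)_r - \Psi_r A(\Phi)_r\big)dV_r = \int_s^{s\vee\cdot\wedge T}\Gamma(\Phi,\Psi)_r dV_r$ in the sense of $\mathbbm{P}^{s,\eta}$-indistinguishability (the truncation at $T$ coming from the convention that $\Phi$ is constant after $T$ and $A\Phi = 0$ there). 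Finally, the "in particular" statement follows immediately: since $\Gamma(\Phi,\Psi)$ is $\mathbbm{F}^o$-progressively measurable and $\langle M[\Phi]^{s,\eta},M[\Psi]^{s,\eta}\rangle = \int_s^{\cdot}\Gamma(\Phi,\Psi)_r dV_r$ for every $(s,\eta)$, Proposition \ref{bracketMAFs} and Notation \ref{RadonAF} identify $\Gamma(\Phi,\Psi)$ with $\frac{d\langle M[\Phi],M[\Psi]\rangle}{dV}$ q.s.

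The main obstacle I anticipate is the careful bookkeeping around the integration-by-parts formula when the martingales are only cadlag (not continuous): one must be attentive to the jump terms, to the use of left limits $\Phi^{s,\eta}_{r-}$ rather than $\Phi^{s,\eta}_r$ in the stochastic integrals, and to justifying that the predictable compensator of $[\Phi^{s,\eta},\Psi^{s,\eta}]$ is indeed $\langle M[\Phi]^{s,\eta},M[\Psi]^{s,\eta}\rangle$ — which requires the square-integrability hypothesis on both MAFs so that the angular bracket is well defined. A secondary point requiring care is that the identities hold only on $[s,T]$ and only $\mathbbm{P}^{s,\eta}$-a.s., so passing from the per-$(s,\eta)$ statement to the quasi-sure statement invokes the uniqueness part of Proposition \ref{bracketMAFs}; this is routine given the earlier results but should be stated explicitly.
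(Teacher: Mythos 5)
Your proposal is correct and follows essentially the same route as the paper's proof: integration by parts for the product $\Phi^{s,\eta}\Psi^{s,\eta}$, comparison with the decomposition of $(\Phi\Psi)^{s,\eta}$ coming from $\Phi\Psi\in\mathcal{D}(A)$, and identification of the predictable bounded-variation parts via uniqueness of the special semimartingale decomposition (with Lemma \ref{ModifImpliesdV} handling the substitution of versions inside the $dV$-integrals). The only cosmetic difference is that you phrase the bracket term as the compensator of $[\Phi^{s,\eta},\Psi^{s,\eta}]$, whereas the paper writes the same identity by splitting $[M^{s,\eta}[\Phi],M^{s,\eta}[\Psi]]$ into $\langle\cdot,\cdot\rangle$ plus a local martingale remainder.
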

\begin{proof}
	We fix $\Phi,\Psi\in \mathcal{D}(A)$, $(s,\eta)\in[0,T]\times \Omega$ and the associated probability $\mathbbm{P}^{s,\eta}$.
	We recall that on $[s,T]$, $\Phi^{s,\eta} =\Phi_s(\eta)+\int_s^{\cdot}A(\Phi)_rdV_r+M^{s,\eta}[\Phi]$ and  $\Psi^{s,\eta} =
\Psi_s(\eta)+\int_s^{\cdot}A(\Psi)_r
dV_r+M^{s,\eta}[\Psi]$ are both cadlag special semi-martingales; since $M^{s,\eta}[\Phi],M^{s,\eta}[\Psi]$  are assumed to be square integrable martingales, they have a well-defined  quadratic covariation and angular bracket.
  Therefore, by integration by parts 
	on $[s,T]$ and by Lemma \ref{ModifImpliesdV} we have
	\begin{equation*} 
	\begin{array}{rl}
	\Phi^{s,\eta} \Psi^{s,\eta}=& \Phi_s(\eta)\Psi_s(\eta) + \int_s^{\cdot}(\Psi_rA(\Phi)_r+\Phi_rA(\Psi)_r)dV_r + \langle M^{s,\eta}[\Phi], M^{s,\eta}[\Psi]\rangle\\
	&+\int_s^{\cdot}\Phi^{s,\eta}_{r^-}dM^{s,\eta}[\Psi]_r +\int_s^{\cdot}\Psi^{s,\eta}_{r^-}dM^{s,\eta}[\Phi]_r\\
	&+ \left([M^{s,\eta}[\Phi],M^{s,\eta}[\Psi]]-\langle M^{s,\eta}[\Phi], M^{s,\eta}[\Psi]\rangle \right).	 
	\end{array}
	\end{equation*}
	On the other hand, since  $\Phi \Psi$ belongs to $\mathcal{D}(A)$, we also have that 
	\begin{equation}
	\Phi^{s,\eta} \Psi^{s,\eta}=(\Phi\Psi)^{s,\eta} = \Phi_s(\eta)\Psi_s(\eta) + \int_s^{\cdot}A(\Phi \Psi)_rdV_r + M^{s,\eta}[\Phi \Psi].
	\end{equation}
	By uniqueness of the decomposition of a special semi-martingale, we can identify the predictable bounded variation part in the two previous decompositions, and we get 
	\begin{equation}
	\int_s^{\cdot}(\Psi_rA(\Phi)_r+\Phi_rA(\Psi)_r)dV_r + \langle M^{s,\eta}[\Phi], M^{s,\eta}[\Psi]\rangle=\int_s^{\cdot}A(\Phi \Psi)_rdV_r,
	\end{equation}
	hence that 
	\begin{equation}
	\langle M^{s,\eta}[\Phi], M^{s,\eta}[\Psi]\rangle = \int_s^{\cdot}(A(\Phi \Psi)_r-\Psi_rA(\Phi)_r-\Phi_rA(\Psi)_r)dV_r,
	\end{equation}
	and the proof is complete.
\end{proof}
\begin{lemma}\label{bracketGammater}
If $\Phi\in\mathcal{D}(A)$, $\Phi^2\in\mathcal{D}(A)$ and $A\Phi\in\mathcal{L}^2_{uni}$, then the MAF $M[\Phi]$ is square integrable. Moreover, $\underset{t\in[s,T]}{\text{sup }}|\Phi^{s,\eta}_t|\in\mathcal{L}^2(\mathbbm{P}^{s,\eta})$ for all $(s,\eta)$.
\end{lemma}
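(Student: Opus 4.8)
The plan is to fix an arbitrary $(s,\eta)\in[0,T]\times\Omega$ (for $s\geq T$ the statement is trivial, since then $M[\Phi]^{s,\eta}$ vanishes and $\Phi^{s,\eta}$ is constant, by the conventions in force) and to reduce everything to an $\mathcal{L}^2(\mathbbm{P}^{s,\eta})$ bound on the terminal value $M[\Phi]^{s,\eta}_T$. By Proposition \ref{CoroMaf}, $M[\Phi]$ (Notation \ref{N514}) is a MAF whose cadlag version $M[\Phi]^{s,\eta}$ is already a genuine $(\mathbbm{P}^{s,\eta},\mathbbm{F}^{s,\eta})$-martingale; hence, to obtain square integrability of the MAF $M[\Phi]$ it is enough to check $M[\Phi]^{s,\eta}_T\in\mathcal{L}^2(\mathbbm{P}^{s,\eta})$ for every $(s,\eta)$, after which Doob's $\mathcal{L}^2$ maximal inequality will handle both the supremum of $M[\Phi]^{s,\eta}$ and, through the semimartingale decomposition of $\Phi^{s,\eta}$, the supremum of $\Phi^{s,\eta}$.

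The key step is the pointwise identity at the terminal time. From Notation \ref{N514} and Remark \ref{RMafPhi}, $\Phi^{s,\eta}=\Phi_s(\eta)+\int_s^{s\vee\cdot}(A\Phi)_r\,dV_r+M[\Phi]^{s,\eta}$ is a cadlag special semimartingale which is a $\mathbbm{P}^{s,\eta}$-version of $\Phi$ on $[s,T]$; evaluating at $t=T$ yields
\begin{equation*}
M[\Phi]^{s,\eta}_T=\Phi_T-\Phi_s(\eta)-\int_s^T(A\Phi)_r\,dV_r\qquad \mathbbm{P}^{s,\eta}\text{-a.s.}
\end{equation*}
I would then bound the three summands in $\mathcal{L}^2(\mathbbm{P}^{s,\eta})$. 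The term $\Phi_s(\eta)$ is $\mathbbm{P}^{s,\eta}$-a.s. a finite constant, since $\mathbbm{P}^{s,\eta}(\omega^s=\eta^s)=1$ and $\Phi_s$ is $\mathcal{F}^o_s$-measurable, hence trivially square integrable. The term $\Phi_T$ lies in $\mathcal{L}^2(\mathbbm{P}^{s,\eta})$ precisely because $\Phi^2\in\mathcal{D}(A)$: item (4) of Hypothesis \ref{HypDA} applied to $\Phi^2$ gives $\mathbbm{E}^{s,\eta}[\Phi_T^2]=\mathbbm{E}^{s,\eta}[|(\Phi^2)_T|]<+\infty$. The drift term is square integrable by Cauchy--Schwarz, $\left(\int_s^T|A\Phi_r|\,dV_r\right)^2\leq (V_T-V_s)\int_s^T|A\Phi_r|^2\,dV_r$, together with $A\Phi\in\mathcal{L}^2_{uni}$, which is exactly the statement $\mathbbm{E}^{s,\eta}\left[\int_s^T|A\Phi_r|^2\,dV_r\right]<+\infty$. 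Summing up, $M[\Phi]^{s,\eta}_T\in\mathcal{L}^2(\mathbbm{P}^{s,\eta})$, hence $M[\Phi]^{s,\eta}$ is a square integrable martingale; as $(s,\eta)$ is arbitrary, $M[\Phi]$ is a square integrable MAF in the sense of Definition \ref{DefAF}.

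For the last claim I would use $\sup_{t\in[s,T]}|\Phi^{s,\eta}_t|\leq|\Phi_s(\eta)|+\int_s^T|A\Phi_r|\,dV_r+\sup_{t\in[s,T]}|M[\Phi]^{s,\eta}_t|$, apply Doob's $\mathcal{L}^2$ inequality to the cadlag martingale $M[\Phi]^{s,\eta}$, giving $\mathbbm{E}^{s,\eta}[\sup_{t\in[s,T]}|M[\Phi]^{s,\eta}_t|^2]\leq 4\,\mathbbm{E}^{s,\eta}[(M[\Phi]^{s,\eta}_T)^2]<+\infty$, and conclude by the triangle inequality in $\mathcal{L}^2(\mathbbm{P}^{s,\eta})$, the first two terms on the right-hand side already having been shown square integrable. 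I do not foresee a genuine obstacle here; the single point worth stressing is that the hypothesis $\Phi^2\in\mathcal{D}(A)$ is used exactly to upgrade the merely $\mathcal{L}^1$ control of $\Phi_T$ available from $\Phi\in\mathcal{D}(A)$ into the $\mathcal{L}^2$ control required, everything else reducing to Cauchy--Schwarz and Doob's inequality.
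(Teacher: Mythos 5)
Your proof is correct and follows essentially the paper's own argument: isolate $M[\Phi]^{s,\eta}_T$ from the decomposition of $\Phi^{s,\eta}$, bound it in $\mathcal{L}^2(\mathbbm{P}^{s,\eta})$ via Cauchy--Schwarz on the drift and the $\mathcal{L}^2$-control of $\Phi_T$ provided by $\Phi^2\in\mathcal{D}(A)$, then obtain the supremum bound from Doob's inequality. The only (harmless) variation is that you get $\mathbbm{E}^{s,\eta}[\Phi_T^2]<\infty$ directly from item 4 of Hypothesis \ref{HypDA} applied to $\Phi^2$, whereas the paper passes through the semimartingale decomposition of $(\Phi^{s,\eta})^2$ to see that $\Phi^{s,\eta}$ takes $\mathcal{L}^2$ values; your shortcut is legitimate and slightly more direct.
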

\begin{proof}
Let $(s,\eta)$ be fixed. We have $(\Phi^{s,\eta})^2= \Phi_s^2(\eta)+ \int_s^{\cdot}A(\Phi^2)_rdV_r + M^{s,\eta}[\Phi^2]$, where $M^{s,\eta}[\Phi^2]$ is a martingale, hence takes $\mathcal{L}^1$-values; moreover
 $A(\Phi^2)\in\mathcal{L}^1(dV\otimes d\mathbbm{P}^{s,\eta})$, see Hypothesis \ref{HypDA}. It is therefore clear that $\Phi^{s,\eta}$ has 
$\mathcal{L}^2$-values. Since 
\begin{equation}\label{EbracketGammater}
\Phi^{s,\eta}=\Phi_s(\eta)+ \int_s^{\cdot}A(\Phi)_rdV_r + M^{s,\eta}[\Phi],
\end{equation}
 by Cauchy-Schwarz inequality we have
$$
  ( M^{s,\eta}[\Phi]_T)^2\leq 4(\Phi^2_s(\eta)+(V_T-V_s)\int_s^TA(\Phi)^2_rdV_r+(\Phi^{s,\eta}_T)^2).  
$$ 
The right-hand side belongs to $ \mathcal{L}^1$
 because $A\Phi\in\mathcal{L}^2_{uni}$, $\Phi^2_s(\eta)$ is deterministic and $\Phi^{s,\eta}$ takes $\mathcal{L}^2$ values, therefore
 $M^{s,\eta}[\Phi]\in\mathcal{H}^2(\mathbbm{P}^{s,\eta})$.

Concerning the second statement, \eqref{EbracketGammater} yields
 $$ \underset{t\in[s,T]}{\text{sup }}(\Phi^{s,\eta}_t)^2\leq 4(\Phi^2_s(\eta)+(V_T-V_s)\int_s^TA(\Phi)^2_r dV_r+\underset{t\in[s,T]}{\text{sup }}(M^{s,\eta}[\Phi]_t)^2) \in \mathcal{L}^1,$$ by Doob's inequality because
 $M^{s,\eta}[\Phi]\in\mathcal{H}^2(\mathbbm{P}^{s,\eta})$.
\end{proof}

We will now be interested in a specific type of driving MAF $N$, see Definition \ref{DrivingMAF}.

We fix $\Psi^1,\cdots,\Psi^d\in\mathcal{D}(A)$ verifying the following.
\begin{hypothesis}\label{HypN2}
For every $1 \le i\leq d$,
\begin{enumerate}
\item $\Psi^i,A(\Psi^i)\in\mathcal{L}^2_{uni}$;
\item $(\Psi^i)^2\in\mathcal{D}(A)$;
\item $\Gamma(\Psi^i)$ is bounded.
\end{enumerate}
\end{hypothesis}

\begin{remark}\label{RN2}
\begin{enumerate}\
\item 
$(M[\Psi^1],\cdots,M[\Psi^d])$ is a vector of square integrable MAFs verifying Hypothesis \ref{HypN}. This follows because of  Lemmas \ref{bracketGammabis}
and \ref{bracketGammater}.
\item
In the sequel of Section \ref{S2}, we therefore work with the driving
 MAF 
$N = M[\Psi]:=(M[\Psi^1],\cdots,M[\Psi^d])$, see
Definition \ref{DrivingMAF}.
With this choice we fit the framework of
Section \ref{S2Prelim}. 
\item In particular,   Theorem \ref{T45}
and Proposition \ref{Defuv} apply:
 for all $(s,\eta)$ there is a unique solution $(Y^{s,\eta},M^{s,\eta})$ of 
the $BSDE^{s,\eta}(f,\xi)$ \eqref{ET45},
 where the driving MAF is $N=M[\Psi]$. 
\end{enumerate}
\end{remark}

We now consider the following abstract path-dependent non linear equation.
\begin{equation}\label{AbstractEq}
\left\{
\begin{array}{l}
A\Phi + f(\cdot,\cdot,\Phi,\Gamma(\Phi,\Psi))=0\text{ on }[0,T]\times\Omega\\
\Phi_T=\xi\text{ on }\Omega.
\end{array}\right.
\end{equation}

Inspired by an analogous notion in the Markovian framework (see \cite{paper3}) 
and by the classical notion of mild solution, we introduce the
 corresponding notion of {\it decoupled mild solution} for a path-dependent evolution equation.
\begin{definition}\label{Abmildsoluv}
	Let $Y,Z^1,\cdots,Z^d\in\mathcal{L}^2_{uni}$ (see Notation \ref{L2uni}) and denote $Z:=(Z_1,\cdots,Z_d)$.
	\begin{enumerate}
		\item 
		
		The couple $(Y,Z)$ will be called {\bf solution
			of the identification problem related to $(f,\xi)$}
		or simply {\bf solution of} 
		$IP(f,\xi)$ if  for every $(s,\eta)\in[0,T]\times \Omega$,
		\small
		\begin{equation}\label{AbMildEq}
		\left\{
		\begin{array}{rl}
		Y_s(\eta)&=P_s[\xi](\eta)+\int_s^TP_s\left[f\left(r,\cdot,Y_r,Z_r\right)\right](\eta)dV_r\\
		(Y\Psi^1)_s(\eta) &=P_s[\xi \Psi^1_T](\eta) -\int_s^TP_s\left[\left(Z^1_r+Y_rA(\Psi^1)_r-\Psi^1_rf\left(r,\cdot,Y_r,Z_r\right)\right)\right](\eta)dV_r\\
		&\cdots\\
		(Y\Psi^d)_s(\eta) &=P_s[\xi \Psi^d_T](\eta) -\int_s^TP_s\left[\left(Z^d_r+Y_rA(\Psi^d)_r-\Psi^d_rf\left(r,\cdot,Y_r,Z_r\right)\right)\right](\eta)dV_r.
		\end{array}\right.
		\end{equation}
		\normalsize
		\item A process $Y$ will be called  {\bf decoupled mild solution}
		of \eqref{AbstractEq} if there exist some $Z$
		such that the couple $(Y,Z)$ is a solution 
		of $IP(f,\xi)$.
	\end{enumerate}
\end{definition}

\begin{theorem}\label{AbstractTheorem}
Assume the validity of Hypothesis \ref{HypN2} 
for $\Psi^1, \ldots, \Psi^d$ and of
Hypothesis  \ref{HypBSDE} for $(f, \xi)$.
\begin{enumerate}
\item $IP(f,\xi)$ admits a unique solution
 $(Y,Z)\in\mathcal{L}^2_{uni}\times (L^2_{uni})^d$.
By uniqueness we mean more precisely the following: 
if $(Y,Z)$ and $(\bar Y, \bar Z)$ are two solutions then
$Y $ and $\bar Y$ are identical and $Z = \bar Z$ q.s.
In particular, there is a unique decoupled mild solution $Y$
of \eqref{AbstractEq}.
\item For every $(s,\eta)$, let 
  $(Y^{s,\eta},M^{s,\eta})$ be the solution of $BSDE^{s,\eta}(f,\xi)$ \eqref{ET45} with $N^{s,\eta}=M[\Psi]^{s,\eta}$. Then, for every $(s,\eta)$,
$Y_s(\eta)= Y^{s,\eta}_s$, and $(Z^1, \ldots, Z^d)$ are identified
as in item 3. of Proposition \ref{Defuv} with
 $N = M[\Psi]$.
\end{enumerate}
\end{theorem}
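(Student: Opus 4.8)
The strategy is to transfer the analytic identity \eqref{AbMildEq} to the probabilistic world via the family $(\mathbbm{P}^{s,\eta})$ and the BSDE \eqref{ET45}, and conversely. First I would let $(Y^{s,\eta},M^{s,\eta})$ be the unique solution of $BSDE^{s,\eta}(f,\xi)$ from Theorem \ref{T45} with driving MAF $N=M[\Psi]$, and let $(Y,M,Z^1,\dots,Z^d)$ be the objects produced by Proposition \ref{Defuv}: $Y\in\mathcal{L}^2_{uni}$ with $Y^{s,\eta}$ a $\mathbbm{P}^{s,\eta}$-modification of $Y$ on $[s,T]$, $M$ a square integrable MAF with cadlag version $M^{s,\eta}$, and $Z^i=\frac{d\langle M^{s,\eta},M[\Psi^i]^{s,\eta}\rangle}{dV}$, $dV\otimes\mathbbm{P}^{s,\eta}$-a.e. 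The core of item 2 is then to show that this $(Y,Z)$ solves $IP(f,\xi)$; item 1's existence follows, and uniqueness will be handled separately.

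For the \textbf{existence} part (first line of \eqref{AbMildEq}): fix $(s,\eta)$. Taking $\mathbbm{E}^{s,\eta}$-expectation (i.e. applying $P_s[\,\cdot\,](\eta)$) in \eqref{ET45} evaluated at time $s$, the martingale increment $M^{s,\eta}_T-M^{s,\eta}_s$ vanishes in expectation (it is a true martingale, null at $s$), and using $Y^{s,\eta}_s=Y_s(\eta)$ and $Y^{s,\eta}_r$ being a $\mathbbm{P}^{s,\eta}$-modification of $Y_r$, together with $Z^{i,s,\eta}_r=Z^i_r$ $dV\otimes\mathbbm{P}^{s,\eta}$-a.e., we get exactly $Y_s(\eta)=P_s[\xi](\eta)+\int_s^T P_s[f(r,\cdot,Y_r,Z_r)](\eta)\,dV_r$. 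For the lines involving $Y\Psi^i$: the idea is to apply integration by parts to $Y^{s,\eta}\Psi^{i,s,\eta}$ on $[s,T]$, where $\Psi^{i,s,\eta}$ is the cadlag special semimartingale from Notation \ref{N514} (a $\mathbbm{P}^{s,\eta}$-version of $\Psi^i$ on $[s,T]$, by Remark \ref{RMafPhi}), with decomposition $\Psi^{i,s,\eta}=\Psi^i_s(\eta)+\int_s^\cdot A(\Psi^i)_r\,dV_r+M[\Psi^i]^{s,\eta}$, and $Y^{s,\eta}$ has the semimartingale decomposition read off from \eqref{ET45}, namely $dY^{s,\eta}_r=-f(r,\cdot,Y^{s,\eta}_r,Z^{s,\eta}_r)\,dV_r+dM^{s,\eta}_r$. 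Integration by parts produces: the drift terms $\int_s^\cdot(Y^{s,\eta}_r A(\Psi^i)_r-\Psi^{i,s,\eta}_{r^-}f(r,\cdot,Y^{s,\eta}_r,Z^{s,\eta}_r))\,dV_r$, the bracket term $\langle M^{s,\eta},M[\Psi^i]^{s,\eta}\rangle=\int_s^\cdot Z^i_r\,dV_r$ (using Notation \ref{N36} / Proposition \ref{Defuv} item 3), plus two stochastic integrals against martingales and a martingale remainder $[M^{s,\eta},M[\Psi^i]^{s,\eta}]-\langle\cdot\rangle$ — all of which have zero $\mathbbm{P}^{s,\eta}$-expectation (the stochastic integrals are true martingales because the integrands are in the right $\mathcal{L}^2$ spaces, using Lemma \ref{bracketGammater} for the boundedness of $\sup|\Psi^{i,s,\eta}|$ in $\mathcal{L}^2$ and Burkholder–Davis–Gundy / Kunita–Watanabe). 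Evaluating at the lower endpoint $s$ and taking $\mathbbm{E}^{s,\eta}$ then gives precisely the $i$-th line of \eqref{AbMildEq}, after replacing $Y^{s,\eta}, Z^{s,\eta}$ by their versions $Y, Z$ and noting $Y^{s,\eta}_s\Psi^{i,s,\eta}_s=Y_s(\eta)\Psi^i_s(\eta)=(Y\Psi^i)_s(\eta)$ and $Y^{s,\eta}_T\Psi^{i,s,\eta}_T=\xi\Psi^i_T$ $\mathbbm{P}^{s,\eta}$-a.s. Care must be taken with the predictable-versus-optional versions ($\Psi^{i,s,\eta}_{r^-}$ vs $\Psi^{i,s,\eta}_r$), but since we only integrate against $dV_r$ (continuous) and against a martingale whose bracket is absolutely continuous w.r.t. the continuous $V$, these discrepancies vanish $dV\otimes\mathbbm{P}^{s,\eta}$-a.e.; a clean way is to invoke a lemma of the type \ref{ModifImpliesdV} referenced in the proof of Lemma \ref{bracketGammabis}.

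For \textbf{uniqueness} (item 1): suppose $(Y,Z)$ and $(\bar Y,\bar Z)$ both solve $IP(f,\xi)$. The plan is to run the argument backwards: given a solution $(Y,Z)$ of $IP(f,\xi)$, build for each $(s,\eta)$ the candidate $\tilde Y^{s,\eta}_t:=P_t[\xi](\cdot)\big|_{\omega}+\int_t^T P_t[f(r,\cdot,Y_r,Z_r)](\cdot)\big|_\omega\,dV_r$ — more precisely, use the martingale $t\mapsto\mathbbm{E}^{s,\eta}[\xi+\int_s^T f(r,\cdot,Y_r,Z_r)\,dV_r\mid\mathcal{F}_t]$, whose martingale representation (subtracting the drift) gives a pair $(\tilde Y^{s,\eta},\tilde M^{s,\eta})$ solving an equation like \eqref{ET45}; one then shows, using the lines of \eqref{AbMildEq} for $Y\Psi^i$ together with Lemma \ref{bracketGammabis} and the martingale-problem property (Markov-type property \eqref{DE14} of the canonical class, which gives $P_t[\,\cdot\,]$ as regular conditional expectations), that $\frac{d\langle\tilde M^{s,\eta},M[\Psi^i]^{s,\eta}\rangle}{dV}=Z^i$ $dV\otimes\mathbbm{P}^{s,\eta}$-a.e., so that $(\tilde Y^{s,\eta},\tilde M^{s,\eta})$ actually solves $BSDE^{s,\eta}(f,\xi)$. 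By the uniqueness in Theorem \ref{T45}, $(\tilde Y^{s,\eta},\tilde M^{s,\eta})=(Y^{s,\eta},M^{s,\eta})$, forcing $Y_s(\eta)=\tilde Y^{s,\eta}_s=Y^{s,\eta}_s$ for every $(s,\eta)$; hence any solution has $Y$ uniquely determined, and then $Z^i$ is determined q.s. by the bracket formula of Proposition \ref{Defuv}. This also immediately yields item 2, since we have identified the solution with the BSDE data.

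\textbf{Main obstacle.} The delicate point is the backward direction of uniqueness: showing that the first line of \eqref{AbMildEq} alone does \emph{not} pin down $Z$, but the combination with the $d$ equations for $Y\Psi^i$ does — equivalently, that the "mild" $\Psi^i$-equations force the correlation $\frac{d\langle\tilde M^{s,\eta},M[\Psi^i]^{s,\eta}\rangle}{dV}$ to equal $Z^i$, which is exactly the identification-problem content. This requires carefully re-deriving the integration-by-parts computation in reverse, propagating the $\mathcal{F}_t$-conditional versions of the mild identity along the filtration (using Hypothesis \ref{HypClass}), and handling the q.s.\ (zero-potential) identification of the $Z^i$, rather than $\mathbbm{P}^{s,\eta}$-a.e.\ for a single $(s,\eta)$. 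The integrability bookkeeping — ensuring all stochastic integrals appearing are genuine martingales so that expectations kill them — is routine given Hypotheses \ref{HypN2}, \ref{HypBSDE} and Lemmas \ref{bracketGammabis}–\ref{bracketGammater}, but must be stated.
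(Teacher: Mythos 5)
Your proposal is correct and follows essentially the same route as the paper's proof: existence by taking $\mathbbm{P}^{s,\eta}$-expectations in the BSDE and in the integration-by-parts expansion of $Y^{s,\eta}\Psi^{i,s,\eta}$ using the data of Proposition \ref{Defuv}, and uniqueness by turning the mild identities (via Hypothesis \ref{HypClass} and cadlag versions under the usual conditions) into a candidate solution $(U^{s,\eta},\bar M^{s,\eta})$ of $BSDE^{s,\eta}(f,\xi)$, identifying $\langle \bar M^{s,\eta},M[\Psi^i]^{s,\eta}\rangle=\int_s^{\cdot}W^i_r\,dV_r$ through the uniqueness of the special-semimartingale decomposition of $U^{s,\eta}\Psi^{i,s,\eta}$, and concluding by the uniqueness in Theorem \ref{T45}. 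One small caveat: in the uniqueness step it is the two-decomposition argument from the \emph{proof} of Lemma \ref{bracketGammabis} (not the lemma itself, which requires membership in $\mathcal{D}(A)$) that applies, since $U$ need not belong to $\mathcal{D}(A)$.
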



\begin{proof} 
	We start establishing existence in item 1.
 Let $Y,Z$ be the processes introduced in Proposition \ref{Defuv}. A direct consequence of that proposition, of Lemma \ref{ModifImpliesdV} and of equations $BSDE^{s,\eta}(f,g)$ is that for every $(s,\eta)\in[0,T]\times\Omega$, we have 
	\begin{equation}
	Y_s=\xi+\int_s^Tf\left(r,\cdot,Y_r,Z_r\right)dV_r -(M^{s,\eta}_T-M^{s,\eta}_s)\quad \mathbbm{P}^{s,\eta}\text{ a.s.}
	\end{equation}
 Taking the expectation, applying Fubini's theorem and using the fact that
 $M^{s,\eta}$ is a martingale, in agreement with Remark \ref{Borel}
 we get
\begin{equation} \label{E412}
Y_s(\eta)=P_s[\xi](\eta)+\int_s^TP_s[f(r,\cdot,Y_r,Z_r)](\eta)dV_r.
\end{equation}	
We now fix an integer $1 \le i \leq d$ and $(s,\eta)\in[0,T]\times\Omega$. 
By Remark \ref{RMafPhi}. we recall that the process $\Psi^i$ admits on $[s,T]$ a $\mathbbm{P}^{s,\eta}$-modification which is a cadlag special semimartingale with decomposition
\begin{equation} \label{E412bis}
	\Psi^{i,s,\eta}=\Psi^i_s(\eta)+\int_s^{\cdot}A(\Psi^i)_rdV_r+M[\Psi^i]^{s,\eta}.
\end{equation}
Applying the integration by parts formula to $Y^{s,\eta}\Psi^{i,s,\eta}$, we get
\begin{eqnarray*}
	d(Y^{s,\eta}\Psi^{i,s,\eta})_t&=&-\Psi^{i,s,\eta}_tf(t,\cdot,Y^{s,\eta},Z^{s,\eta})dV_t+\Psi^{i,s,\eta}_{t^-}dM^{s,\eta}_t+Y^{s,\eta}A(\Psi^i)_tdV_t\\
&+&Y^{s,\eta}_tdM[\Psi^i]^{s,\eta}_t+d[M^{s,\eta},M[\Psi^i]^{s,\eta}]_t,
\end{eqnarray*}
hence integrating between $s$ and $T$, by Proposition \ref{Defuv} and by Lemma \ref{ModifImpliesdV},
\begin{eqnarray}\label{eqYZ}
	Y_s(\eta)\Psi^i_s(\eta)&=&\xi\Psi^i_T-\int_s^T(Y_rA(\Psi^i)_r-\Psi^i_rf(r,\cdot,Y_r,Z_r))dV_r-\int_s^T\Psi^{i,s,\eta}_{r^-}dM^{s,\eta}_r  \nonumber \\
&& \\
&-&\int_s^TY^{s,\eta}_{r^-}dM[\Psi^i]^{s,\eta}_r-[M^{s,\eta},M[\Psi^i]^{s,\eta}]_T. \nonumber
\end{eqnarray}
We wish once again to take the expectation and to use Fubini's theorem. 
 Since $Y,A(\Psi^i),\Psi^i,f(\cdot,\cdot,Y,Z)\in\mathcal{L}^2_{uni}$ then $(Y_rA(\Psi^i)_r-\Psi^i_rf(r,\cdot,Y_r,Z_r))\in\mathcal{L}^1_{uni}$. By Lemma \ref{bracketGammater} we have  
 $\underset{t\in[s,T]}{\text{ sup }}|\Psi^{i,s,\eta}_t|\in\mathcal{L}^2(\mathbbm{P}^{s,\eta})$  and $M[\Psi^i]^{s,\eta}\in\mathcal{H}^2(\mathbbm{P}^{s,\eta})$ and we recall that this implies that 
  $\int_s^{\cdot}\Psi^{i,s,\eta}_{r^-}dM^{s,\eta}_r$ is a martingale, see for example  Lemma 3.15 in \cite{paper1preprint}.
 $\int_s^{\cdot}Y^{s,\eta}_{r^-}dM[\Psi^i]^{s,\eta}_r$ is also a martingale,
see Remark A.11 in \cite{paper3}.
Finally, since by Lemma \ref{bracketGammater}, $\Psi^i_T\in \mathcal{L}^2(\mathbbm{P}^{s,\eta})$  and since 
 $\xi\in \mathcal{L}^2(\mathbbm{P}^{s,\eta})$, then $\xi\Psi^i_T\in \mathcal{L}^1(\mathbbm{P}^{s,\eta})$. So we can take the expectation in \eqref{eqYZ} to get
 \small
\begin{equation}
	\begin{array}{rl}
		Y_s(\eta)\Psi^i_s(\eta)&=\mathbbm{E}^{s,\eta}\left[\xi\Psi^i_T-\int_s^T(Y_rA(\Psi^i)_r-\Psi^i_rf(r,\cdot,Y_r,Z_r))dV_r-[M^{s,\eta},M[\Psi^i]^{s,\eta}]_T\right]\\&=\mathbbm{E}^{s,\eta}\left[\xi\Psi^i_T-\int_s^T(Y_rA(\Psi^i)_r-\Psi^i_rf(r,\cdot,Y_r,Z_r))dV_r-\langle M^{s,\eta},M[\Psi^i]^{s,\eta}\rangle_T\right]\\
		&=\mathbbm{E}^{s,\eta}\left[\xi\Psi^i_T-\int_s^T(Z^i_r+Y_rA(\Psi^i)_r-\Psi^i_rf(r,\cdot,Y_r,Z_r))dV_r\right],
	\end{array}
\end{equation} 
\normalsize
where the latter equality yields from Proposition \ref{Defuv}. Since $Z^i\in\mathcal{L}^2_{uni}$ and $(Y_rA(\Psi^i)_r-\Psi^i_rf(r,\cdot,Y_r,Z_r))\in\mathcal{L}^1_{uni}$, by Fubini's Theorem we have 

\begin{equation}
Y_s(\eta)\Psi^i_s(\eta)=P_s[\xi\Psi^i_T](\eta)-\int_s^TP_s[Z^i_r+Y_rA(\Psi^i)_r-\Psi^i_rf(r,\cdot,Y_r,Z_r)](\eta)dV_r.
\end{equation}
This shows existence in item 1. 
 The validity of item 2. comes from the choice of $(Y,Z)$ 
and by Proposition \ref{Defuv}.

We will now proceed showing uniqueness in item 1.
We assume the existence of $U,W^1,\cdots,W^d\in\mathcal{L}^2_{uni}$ such that 
for all $(s,\eta)\in[0,T]\times \Omega$, 
\begin{equation}\label{eqUW}
\begin{array}{rl}
U_s(\eta)&=P_s[\xi](\eta)+\int_s^TP_s[f(r,\cdot,U_r,W_r)](\eta)dV_r\\
U_s(\eta)\Psi^i_s(\eta)&=P_s[\xi\Psi^i_T](\eta)-\int_s^TP_s[W^i_r+U_rA(\Psi^i)_r-\Psi^i_rf(r,\cdot,U_r,W_r)](\eta)dV_r\\
&1\leq i\leq d.
\end{array}
\end{equation}
 We will show that $U = Y$ and that for all $(s,\eta)$,   $1 \le i \le d$,
 \\
$ W^i =  \frac{d\langle M^{s,\eta},M[\Psi^i]^{s,\eta}\rangle_r}{dV_r}, \quad dV \otimes d\mathbbm{P}^{s,\eta} {\rm a.e.}$
hence that $W=Z$ q.s.
 
We fix $(s,\eta)\in[0,T]\times\Omega$.
 We define the process $\bar M$ as being
 equal to $0$ on $[0,s]$ and to 
$U_t-U_s(\eta)+\int_s^tf(r,\cdot,U_r,W_r)dV_r$ $t\in [s,T]$. Let us fix
  $t\in [s,T]$. We emphasize that $U$ and $\bar M$ are a priori not cadlag.
 Applying the first line of \eqref{eqUW} to $(s,\eta):=(t,\omega)$, we get, $\mathbbm{P}^{s,\eta}$ a.s.
\begin{equation} \label{E321}
\begin{array}{rcl}
U_t(\omega)&=&P_t[\xi](\omega)+\int_t^TP_t[f(r,\cdot,U_r,W_r)](\omega)dV_r\\
&=&\mathbbm{E}^{t,\omega}\left[\xi+\int_t^Tf(r,\cdot,U_r,W_r)dV_r\right]\\
&=&\mathbbm{E}^{s,\eta}\left[\xi+\int_t^Tf(r,\cdot,U_r,W_r)dV_r|\mathcal{F}_t\right],
\end{array}
\end{equation}
thanks to  Remark \ref{Borel}.
 From this we deduce that for all  $ t\in[s,T]$,
\begin{equation}\label{E416}
 \bar M_t=\mathbbm{E}^{s,\eta}\left[\xi+\int_s^Tf(r,\cdot,U_r,W_r)dV_r\middle|\mathcal{F}_t\right] -U_s(\eta)  \quad \mathbbm{P}^{s,\eta} {\rm a.s.}
\end{equation}
In particular $\bar{M}$ is a  $\mathbbm{P}^{s,\eta}$-martingale on $[s,T]$.
 Since $\left(\Omega,\mathcal{F}^{s,\eta},\mathbbm{F}^{s,\eta},\mathbbm{P}^{s,\eta}\right)$  fulfills the usual conditions, $\bar{M}$ admits a cadlag version which we will denote $\bar{M}^{s,\eta}$. Then $U$ admits on $[s,T]$ a cadlag version $U^{s,\eta}$ which is a special semimartingale with decomposition
\begin{eqnarray} \label{E419}
	U^{s,\eta} &:=& U_s(\eta)-\int_s^{\cdot}f(r,\cdot,U_r,W_r)dV_r+\bar{M}^{s,\eta} \nonumber \\
&& \\
&=& U_s(\eta)-\int_s^{\cdot}f(r,\cdot,U^{s,\eta}_r,W_r)dV_r+\bar{M}^{s,\eta}, \nonumber
\end{eqnarray}
where the second equality follows by  the fact that $U^{s,\eta}$ is a version of  $U$  
and by Lemma \ref{ModifImpliesdV}.
By  \eqref{E321}, 
 we have $ U^{s,\eta}_T = U_T = \xi$,   $\mathbbm{P}^{s,\eta}$-a.s.
so, 
\begin{equation}\label{eqUWBSDE1}
U^{s,\eta} = \xi + \int_{\cdot}^{T}f(r,\cdot,U^{s,\eta}_r,W_r)dV_r - (\bar{M}^{s,\eta}_T - \bar{M}^{s,\eta}).
\end{equation}
We  show below that $(U^{s,\eta},\bar{M}^{s,\eta})$ solves $BSDE^{s,\eta}(f,\xi)$ on $[s,T]$. For this, we are left to show that $W=\frac{d\langle \bar{M}^{s,\eta},M[\Psi]^{s,\eta}\rangle}{dV}$, $dV\otimes d\mathbbm{P}^{s,\eta}$ a.e., and that $(U^{s,\eta},\bar{M}^{s,\eta})\in\mathcal{L}^2(dV\otimes\mathbbm{P}^{s,\eta})\times\mathcal{H}^2_0(\mathbbm{P}^{s,\eta})$. By  \eqref{E416} together with  Jensen's and Cauchy-Schwarz inequalities we have
\begin{equation}
	\begin{array}{rcl}
		&&\mathbbm{E}^{s,\eta}[\bar M_T^2]\\ 
		&=&\mathbbm{E}^{s,\eta}\left[(\xi-U_s(\eta)+\int_s^Tf(r,\cdot,U_r,W_r)dV_r)^2\right]\\
		&\leq& 4\left(U_s(\eta)^2+\mathbbm{E}^{s,\eta}[\xi^2]+ (V_T-V_s) \mathbbm{E}^{s,\eta}\left[\int_s^Tf^2(r,\cdot,U_r,W_r)dV_r)\right]\right),
	\end{array} 
\end{equation}
where the latter term is finite because $f(\cdot,\cdot,0,0),U,W^1,\cdots,W^d\in\mathcal{L}^2_{uni}$ and because of the Lipschitz condition on $f$. 
So the cadlag version   $\bar{M}^{s,\eta}$ of $\bar M$, belongs to $\mathcal{H}^2_0(\mathbbm{P}^{s,\eta})$.
We fix again  some $1\leq i \leq d$. 

Combining the second line   of \eqref{eqUW} with this fixed integer $i$, applied with $(t,\omega)$ instead of $(s,\eta)$, Fubini's lemma and  Remark  \ref{Borel}, we get the $\mathbbm{P}^{s,\eta}$-a.s. equalities 
\begin{equation} \label{E522}
\begin{array}{rl}
&U_t(\omega)\Psi^i_t(\omega)\\
 =& P_t[\xi\Psi^i_T](\omega) - \int_t^TP_t\left[\left(W^i_r+U_rA(\Psi^i)_r-\Psi^i_rf\left(r,\cdot,U_r,W_r\right)\right)\right](\omega)dV_r\\
=& \mathbbm{E}^{t,\omega}\left[\xi\Psi^i_T-\int_t^T\left(W^i_r+U_rA(\Psi^i)_r-\Psi^i_rf\left(r,\cdot,U_r,W_r\right)\right)dV_r\right] \\
=& \mathbbm{E}^{s,\eta}\left[\xi\Psi^i_T-\int_t^T\left(W^i_r+U_rA(\Psi^i)_r-\Psi^i_rf\left(r,\cdot,U_r,W_r\right)\right)dV_r|\mathcal{F}_t\right](\omega).
\end{array}
\end{equation}
We introduce the process $\tilde M^i$ equal to $0$ on $[0,s[$ and defined on $[s,T]$ by 
\begin{equation}\label{E523bis}
\tilde M^i_t:=U_t\Psi^i_t-U_s(\eta)\Psi^i_s(\eta)-\int_s^t\left(W^i_r+U_rA(\Psi^i)_r-\Psi^i_rf\left(r,\cdot,U_r,W_r\right)\right)dV_r.
\end{equation}
Similarly as for \eqref{E416}, 
we deduce from \eqref{E522} that for all $t \in [s,T]$,  $\mathbbm{P}^{s,\eta}$ a.s.,
$\tilde M^i_t=\mathbbm{E}^{s,\eta}\left[\xi\Psi^i_T-\int_s^T \left(W^i_r+U_rA(\Psi^i)_r-\Psi^i_rf\left(r,\cdot,U_r,W_r\right)\right)dV_r
|\mathcal{F}_t\right]-U_s(\eta)\Psi^i_s(\eta)$
So $\tilde M^i$  is a  $\mathbbm{P}^{s,\eta}$-martingale.
Under $\mathbbm{P}^{s,\eta}$, we consider the cadlag version $\tilde M^{i,s,x}$ of $\tilde M^i$. 
\\
It follows by \eqref{E523bis} that  the process 
\begin{equation} \label{EV1}	
U_s(\eta)\Psi^i_s(\eta)+\int_s^{\cdot}\left(W^i_r+U_rA(\Psi^i)_r-\Psi^i_rf\left(r,\cdot,U_r,W_r\right)\right)dV_r + \tilde M^{i,s,\eta},
\end{equation}
is  a cadlag special semimartingale which is a $\mathbbm{P}^{s,\eta}$-version of $U\Psi^i$ on $[s,T]$ hence indistinguishable from $U^{s,\eta}\Psi^{i,s,\eta}$ on $[s,T]$ since $ U^{s,\eta}$ (resp. $\Psi^{i,s,\eta}$)   is a cadlag version of $U$ (resp. $\Psi^{i}$).
Using  (\ref{E419}) and
integrating by parts, on
 $[s,T]$ we also get for $U^{s,\eta}\Psi^{i,s,\eta}$ the decomposition
\begin{equation} \label{E523}
\begin{array}{rl}
& U^{s,\eta} \Psi^{i,s,\eta} \\
=& 
U_s(\eta)\Psi^i_s(\eta)+\int_s^{\cdot}U^{s,\eta}_rA(\Psi^i)_rdV_r+\int_s^{\cdot}U^{s,\eta}_{r^-}dM[\Psi^i]^{s,\eta}_r\\
&-\int_s^{\cdot}\Psi^{i,s,\eta}_rf(r,\cdot,U_r,W_r)dV_r+\int_s^{\cdot}\Psi^{i,s,\eta}_{r^-}d\bar{M}^{s,\eta}_r+[\bar{M}^{s,\eta},M[\Psi^i]^{s,\eta}]\\
=& U_s(\eta)\Psi^i_s(\eta)+\int_s^{\cdot}(U^{s,\eta}_rA(\Psi^i)_r-\Psi^{i,s,\eta}_rf(r,\cdot,U_r,W_r))dV_r +\langle \bar{M}^{s,\eta},M[\Psi^i]^{s,\eta}\rangle\\
&+\int_s^{\cdot}U^{s,\eta}_{r^-}dM[\Psi^i]^{s,\eta}_r+\int_s^{\cdot}\Psi^{i,s,\eta}_{r^-}d\bar{M}^{s,\eta}_r+\left([\bar{M}^{s,\eta},M[\Psi^i]^{s,\eta}]-\langle \bar{M}^{s,\eta},M[\Psi^i]^{s,\eta}\rangle\right).
\end{array}
\end{equation}
\eqref{EV1} and \eqref{E523} provide now two ways of decomposing the special semi-martingale $U^{s,\eta}\Psi^{i,s,\eta}$ into the sum of an initial value, a bounded variation predictable process vanishing at time $s$ and of a local martingale vanishing at time $s$.

%
By uniqueness of the decomposition of a special semimartingale, 
identifying the bounded variation predictable components
and using Lemma \ref{ModifImpliesdV}
we get 
\begin{equation}
\begin{array}{rcl}
&&\int_s^{\cdot}  (W^i_r+ U_rA(\Psi^i)_r-\Psi^i_rf(r,\cdot,U_r,W_r))dV_r \\
&=&	\langle \bar{M}^{s,\eta},M[\Psi^i]^{s,\eta}\rangle +\int_s^{\cdot}(U_rA(\Psi^i)_r-\Psi^i_rf(r,\cdot,U_r,W_r))dV_r.
\end{array}
\end{equation}
This  yields that $\langle \bar{M}^{s,\eta},M[\Psi^i]^{s,\eta}\rangle$ and $\int_s^{\cdot}W^i_rdV_r$ are indistinguishable on $[s,T]$.  Since this holds for all $i$, 
 thanks to \eqref{eqUWBSDE1} we have
\begin{equation} 
 U^{s,\eta} = \xi+\int_{\cdot}^{T}f\left(r,\cdot,U^{s,\eta}_r,\frac{d\langle \bar{M}^{s,\eta},M[\Psi]^{s,\eta}\rangle_r}
{dV_r}\right)dV_r-(\bar{M}^{s,\eta}_T - \bar{M}^{s,\eta}),
\end{equation}
with $\bar{M}^{s,\eta}\in\mathcal{H}^2_0(\mathbbm{P}^{s,\eta})$. 
This implies of course that $(U^{s,\eta}, \bar{M}^{s,\eta})$ is a solution of $BSDE^{s,\eta}(f,\xi)$.
Thanks to the uniqueness statement for BSDEs, see Theorem \ref{T45},
this shows
$U^{s,\eta} = Y^{s,\eta}$
and $\bar{M}^{s,\eta} = M^{s,\eta}$ in the sense of indistinguishability.
In particular, the first equality gives
\begin{equation} \label{E427bis} 
U_s(\eta)=U^{s,\eta}_s=Y^{s,\eta}_s=Y_s(\eta) \ {\rm a.s.}
 \end{equation}
Since this holds for all $(s,\eta)$, we have $U=Y$. On the other hand, 
for all $i$ we have $W^i_t=\frac{d\langle \bar{M}^{s,\eta},M[\Psi^i]^{s,\eta}\rangle_t}{dV_t} = \frac{d\langle M^{s,\eta},M[\Psi^i]^{s,\eta}\rangle_t}{dV_t}=Z^{i}_t $ 
$dV \otimes dP^{s,\eta}$ a.e. for all $(s,\eta)$ hence $W^i=Z^i$ q.s.
This concludes the proof of uniqueness.
\end{proof}

\section{Decoupled mild solutions of path-dependent PDEs
 and IPDEs}\label{S3}

In this section we keep using Notation \ref{canonicalspace}, but $E=\mathbbm{R}^d$ for some $d\in\mathbbm{N}^*$ and $(X^1_t,\cdots,X^d_t)_{t\in\mathbbm{R}_+}$ will denote the coordinates of the canonical process,see Notation \ref{canonicalspace}. $T > 0$ will be a fixed horizon. 

For the convenience of the reader, the stopped canonical process $(X^1_{\cdot\wedge T},\cdots,X^d_{\cdot\wedge T})$ will still be by denoted $(X^1,\cdots,X^d)$.

\subsection{Path-dependent SDEs}\label{S1c}

We now recall some notions and results concerning a family of path-dependent SDEs with jumps whose solution provide examples of path-dependent canonical classes. In this subsection, all results come from Section 5 in \cite{paperMPv2}. We will also refer to notions of \cite{jacod} Chapters II, III, VI and \cite{jacod79} Chapter XIV.5.

\begin{notation}\label{Omegat}
	For any $t\in\mathbbm{R}_+$, we denote $\Omega^t:=\{\omega\in\Omega:\omega=\omega^t\}$ the set of paths which are constant after time $t$.
	We also denote $\Lambda:=\{(s,\eta)\in\mathbbm{R}_+\times\Omega: \eta\in\Omega^s\}$.
\end{notation}

\begin{proposition}\label{BorelLambda}
	$\Lambda$ is a closed subspace of $\mathbbm{R}_+\times\Omega$, hence a Polish space when equipped with the induced topology. 
	The Borel $\sigma$-field $\mathcal{B}(\Lambda)$ is equal to the trace $\sigma$-field $\Lambda\cap\mathcal{P}ro^o$.
\end{proposition}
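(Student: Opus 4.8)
The statement has two parts: that $\Lambda$ is closed in $\mathbbm{R}_+\times\Omega$ (whence Polish for the induced topology), and that $\mathcal{B}(\Lambda)=\Lambda\cap\mathcal{P}ro^o$. For the first, the plan is to argue by sequences. Given $(s_n,\eta_n)_n\subset\Lambda$ with $(s_n,\eta_n)\to(s,\eta)$, so that $s_n\to s$, $\eta_n\to\eta$ for the Skorokhod topology, and $\eta_n=\eta_n^{s_n}$ for every $n$, I want to conclude $\eta=\eta^s$. I would first note that, by right-continuity of cadlag paths, $\eta\in\Omega^s$ is equivalent to $\eta$ being constant on $(s,+\infty)$. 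Since $\eta$ has at most countably many discontinuities and Skorokhod convergence forces pointwise convergence at every continuity point of the limit (see \cite{EthierKurz}, Chapter~3), I would pick continuity points $s<t_1<t_2$ of $\eta$; for $n$ large one has $s_n<t_1<t_2$, hence $\eta_n(t_1)=\eta_n(s_n)=\eta_n(t_2)$ because $\eta_n$ is constant after $s_n$, and letting $n\to\infty$ gives $\eta(t_1)=\eta(t_2)$. Thus $\eta$ is constant on the dense set of its continuity points inside $(s,+\infty)$, and right-continuity upgrades this to $\eta$ being constant on $[s,+\infty)$, i.e.\ $\eta=\eta^s$. Since $\mathbbm{R}_+\times\Omega$ is a finite product of Polish spaces (see Notation~\ref{canonicalspace}), the closed set $\Lambda$ is then Polish.

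For the identification of $\sigma$-fields, the inclusion $\Lambda\cap\mathcal{P}ro^o\subseteq\mathcal{B}(\Lambda)$ is the routine one: any $\mathbbm{F}^o$-progressive set lies in $\mathcal{B}(\mathbbm{R}_+)\otimes\mathcal{F}$, which coincides with $\mathcal{B}(\mathbbm{R}_+\times\Omega)$ because $\mathbbm{R}_+$ and $\Omega$ are second countable, and the trace of the Borel $\sigma$-field of a space on a subspace is precisely the Borel $\sigma$-field of that subspace.

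The substantial inclusion is $\mathcal{B}(\Lambda)\subseteq\Lambda\cap\mathcal{P}ro^o$, and here the device I would use is the stopping retraction $\theta\colon(s,\omega)\longmapsto(s,\omega^s)$ on $\mathbbm{R}_+\times\Omega$: it takes values in $\Lambda$, and its restriction to $\Lambda$ is the identity. The plan is to prove $\theta$ is $\mathcal{P}ro^o$-measurable. Its first component is trivially so; for the second, since $\mathcal{F}=\sigma(X_r\,|\,r\geq 0)$ it suffices to check that $(s,\omega)\longmapsto X_{s\wedge t}(\omega)$ is $\mathcal{P}ro^o$-measurable for each fixed $t$, and this map is just the canonical process stopped at the deterministic time $t$, which is right-continuous and $\mathbbm{F}^o$-adapted, hence $\mathbbm{F}^o$-progressive (equivalently, the stopped process of the progressive process $X$ at the stopping time $t$). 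Granting this, I would write an arbitrary element of $\mathcal{B}(\Lambda)$ as $B\cap\Lambda$ with $B\in\mathcal{B}(\mathbbm{R}_+\times\Omega)$; then $\theta^{-1}(B\cap\Lambda)\in\mathcal{P}ro^o$, and since $\theta$ is the identity on $\Lambda$ one has $\theta^{-1}(B\cap\Lambda)\cap\Lambda=B\cap\Lambda$, so $B\cap\Lambda\in\Lambda\cap\mathcal{P}ro^o$.

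The step I expect to need the most care is the $\mathcal{P}ro^o$-measurability of the second component of $\theta$, i.e.\ of the variable-time stopping $(s,\omega)\mapsto\omega^s$: the reduction to the fixed-time stopped canonical process and the use of the classical fact that right-continuous adapted processes are progressive (see e.g.\ \cite{jacod}, Chapter~I) are the crux of the argument. In the topological part, the only slightly delicate points are the appeal to pointwise convergence at continuity points for the Skorokhod topology and the right-continuity argument passing from ``constant on a dense subset of $(s,+\infty)$'' to $\eta=\eta^s$, including the value at $s$ itself via $\eta(s)=\lim_{r\downarrow s}\eta(r)$.
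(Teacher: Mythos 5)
Your proof is correct. Note that the paper itself gives no proof of this proposition: Subsection \ref{S1c} explicitly recalls it from Section 5 of the companion paper \cite{paperMPv2}, so there is no in-text argument to compare against; your write-up is a sound self-contained reconstruction along the natural lines. Both halves check out: closedness follows as you say from pointwise convergence of Skorokhod-convergent sequences at continuity points of the limit, together with right-continuity to pass from constancy on a dense subset of $(s,+\infty)$ to $\eta=\eta^s$ (including the value at $s$); and the $\sigma$-field identification is handled correctly, the easy inclusion via $\mathcal{P}ro^o\subseteq\mathcal{B}(\mathbbm{R}_+)\otimes\mathcal{F}=\mathcal{B}(\mathbbm{R}_+\times\Omega)$ (second countability) plus the trace property, and the substantial one via the retraction $\theta:(s,\omega)\mapsto(s,\omega^s)$, whose $\mathcal{P}ro^o$-measurability reduces, since $\mathcal{F}=\sigma(X_r,\,r\geq 0)$, to the progressive measurability of the right-continuous adapted processes $(s,\omega)\mapsto X_{s\wedge t}(\omega)$, and the identity $\theta^{-1}(B\cap\Lambda)\cap\Lambda=B\cap\Lambda$ then delivers $\mathcal{B}(\Lambda)\subseteq\Lambda\cap\mathcal{P}ro^o$.
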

From now on, $\Lambda$, introduced in Notation \ref{Omegat}, is equipped with the induced topology and the trace $\sigma$-field. 
\\
\\
We fix a bounded positive measure $F$ on $(\mathbbm{R}^d,\mathcal{B}(\mathbbm{R}^d))$ not charging $0$, and some coefficients:
\begin{itemize}
	\item $\beta$, a bounded $\mathbbm{R}^d$-valued $\mathbbm{F}^o$-predictable process;
	\item $\sigma$, a bounded $\mathbbm{M}_d(\mathbbm{R})$-valued $\mathbbm{F}^o$-predictable process;
	\item $\gamma$, a bounded $\mathbbm{R}^d$-valued  $\mathcal{P}re^o\otimes\mathcal{B}(\mathbbm{R}^d)$-measurable function on $\mathbbm{R}_+\times\Omega\times\mathbbm{R}^d$,
\end{itemize}
defined on the canonical space.

\begin{definition} \label{D213}
	Let $(s,\eta)\in\mathbbm{R}_+\times\Omega$.
	We call \textbf{weak solution of the SDE with coefficients $\beta$, $\sigma$, $\gamma$ and starting in $(s,\eta)$} any probability measure $\mathbbm{P}^{s,\eta}$ on $(\Omega,\mathcal{F})$ such that there exists a  stochastic basis fulfilling the usual conditions $(\tilde \Omega,\tilde{\mathcal{F}},\tilde{\mathbbm{F}},\mathbbm{\tilde P})$ on which is defined a $d$-dimensional
 Brownian motion $W$ and a Poisson measure $p$ of intensity 
	$q(dt,dx):=dt\otimes F(dx)$,  $W,p$ being optional for the
 filtration $\tilde{\mathbbm{F}}$, 
 a $d$-dimensional   $\tilde{\mathbbm{F}}$-adapted cadlag process $\tilde X$ such that $\mathbbm{P}^{s,\eta}=\tilde{\mathbbm{P}}\circ \tilde X^{-1}$ and such that the following holds.
	\\
	\\
	Let $\tilde \beta$, $\tilde \sigma$, $\tilde \gamma$ be defined by   $\tilde \beta_t:=\beta_t\circ\tilde X$, $\tilde \sigma_t:=\sigma_t\circ\tilde X$ for all $t\in\mathbbm{R}_+$ and $\tilde \gamma_t(\cdot,x):=\gamma_t(\tilde X,x)$ for all $(t,x)\in\mathbbm{R}_+\times\mathbbm{R}^d$. Then 
	\begin{itemize}
		\item for all $t\in[0,s]$, $\tilde X_t=\eta(t)$
		$\mathbbm{\tilde P}$ a.s.;
		\item $\tilde X_t=\eta(s)+\int_s^t\tilde \beta_rdr+\int_s^t\tilde \sigma_rdW_r+\tilde \gamma\star(p-q)_t$ $\mathbbm{\tilde P}$ a.s. for all $t\geq s$,
	\end{itemize}
	where $\star$ is the integration against random measures, see \cite{jacod} Chapter II.2.d for instance.
\end{definition}


\begin{definition}\label{Char}
	Let $s\in\mathbbm{R}_+$ and $(Y_t)_{t\geq s}$ be a cadlag special semimartingale defined on the canonical space with (unique) decomposition $Y=Y_s+B+M^c+M^d$,
 where $B$ is predictable with bounded variation, $M^c$ a continuous local martingale, $M^d$ a purely discontinuous martingale, all three vanishing at time $s$. We will call \textbf{characteristics} of $Y$ the triplet $(B,C,\nu)$ where $C=\langle M^c\rangle$ and $\nu$ is the predictable compensator of the measure of the jumps of $Y$.
\end{definition}

There are several  equivalent characterizations of weak solutions of path-dependent SDEs with jumps which we will now state in our setup.

\begin{notation}\label{NotA}
	For any  $f\in\mathcal{C}_b^2(\mathbbm{R}^d)$ and $t\geq 0$, we denote by $A_tf$ the r.v.
	\begin{equation}
	\begin{array}{rl}
	&\beta_t\cdot \nabla f(X_t)+\frac{1}{2}Tr(\sigma\sigma^{\intercal}\nabla^2f(X_t))\\
	+&\int_{\mathbbm{R}^d}(f(X_t+\gamma_t(\cdot,y))-f(X_t)-\nabla f(X_t)\cdot \gamma_t(\cdot,y))F(dy).
	\end{array}
	\end{equation}
\end{notation}

\begin{proposition}\label{SDEeq}
	Let $(s,\eta)\in\mathbbm{R}_+\times\Omega$ be fixed and let $\mathbbm{P}\in\mathcal{P}(\Omega)$. There is equivalence between the  following items.
	\begin{enumerate}
		\item $\mathbbm{P}$ is a weak solution of the SDE with coefficients $\beta,\sigma,\gamma$ starting in $(s,\eta)$.
		\item $\mathbbm{P}(\omega^s=\eta^s)=1$ and, under $\mathbbm{P}$,
		$(X_t)_{t\geq s}$ is a special semimartingale with characteristics 
		\begin{equation}
		\begin{array}{lcl}
		B&=&\int_s^{\cdot}\beta_rdr;
		\\
		C&=&\int_s^{\cdot}(\sigma\sigma^{\intercal})_rdr;\\
		\nu&:&(\omega,A)\longmapsto\int_s^{+\infty}\int_{\mathbbm{R}^d}\ \mathds{1}_A(r,\gamma_r(\omega,y))\mathds{1}_{\{\gamma_r(\omega,y)\neq 0\}}F(dy)dr.
		\end{array}
		\end{equation}
		\item  $\mathbbm{P}(\omega^s=\eta^s)=1$
		and  $f(X_{\cdot})-\int_s^{\cdot}A_rfdr$ is on $[s,+\infty[$ a $(\mathbbm{P},\mathbbm{F})$-martingale for all 
		$f\in\mathcal{C}_b^2(\mathbbm{R}^d)$.
	\end{enumerate}
\end{proposition}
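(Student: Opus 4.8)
The plan is to establish the cycle of implications $(1)\Rightarrow(2)\Rightarrow(3)\Rightarrow(2)\Rightarrow(1)$, which is the path-dependent counterpart of the classical equivalences between weak solutions of SDEs with jumps, prescribed semimartingale characteristics, and martingale problems; the relevant references are \cite{jacod} Chapters II--III and \cite{jacod79} Chapter XIV.5, and the genuinely constructive arguments are already available in Section 5 of \cite{paperMPv2}, so I would mostly assemble and cite rather than reprove them.

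For $(1)\Rightarrow(2)$ I would start from a weak solution $\mathbbm{P}=\tilde{\mathbbm{P}}\circ\tilde X^{-1}$, where $\tilde X$ solves the SDE on an auxiliary basis carrying a Brownian motion $W$ and a Poisson measure $p$ of intensity $dt\otimes F(dx)$. The constraint $\tilde X_t=\eta(t)$ on $[0,s]$ immediately yields $\mathbbm{P}(\omega^s=\eta^s)=1$. On $[s,+\infty[$, $\tilde X$ is the sum of the absolutely continuous finite variation part $\int_s^\cdot\tilde\beta_r\,dr$, the continuous local martingale $\int_s^\cdot\tilde\sigma_r\,dW_r$, and the purely discontinuous martingale $\tilde\gamma\star(p-q)$; computing the bracket of the continuous part and the predictable compensator of the jump measure one recovers exactly the triplet $(B,C,\nu)$ of item 2, pushed forward through $\tilde X$, in the sense of Definition \ref{Char}. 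Since the characteristics of a semimartingale depend only on its law, $X$ under $\mathbbm{P}$ has the stated characteristics.

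For $(2)\Rightarrow(3)$ I would apply the It\^o formula for semimartingales with jumps to $f(X_\cdot)$, $f\in\mathcal{C}_b^2(\mathbbm{R}^d)$: using the explicit form of $(B,C,\nu)$, the drift term, the second-order term driven by $C$, and the compensated-jump term driven by $\nu$ reassemble into $\int_s^\cdot A_rf\,dr$ with $A_r$ as in Notation \ref{NotA}, while the remaining stochastic integrals form a local martingale on $[s,+\infty[$; boundedness of $f,\nabla f,\nabla^2 f$ and of $\beta,\sigma,\gamma,F$ upgrades it to a true $(\mathbbm{P},\mathbbm{F})$-martingale after a routine localization, giving item 3. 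For $(3)\Rightarrow(2)$ I would invoke the converse half of the same correspondence (see \cite{jacod} Section II.2 or \cite{jacod79} Chapter XIV): for a process on the canonical space with $\mathbbm{P}(\omega^s=\eta^s)=1$, being a semimartingale with the characteristics of item 2 on $[s,+\infty[$ is \emph{equivalent} to $f(X_\cdot)-\int_s^\cdot A_rf\,dr$ being a local martingale for all $f\in\mathcal{C}_b^2(\mathbbm{R}^d)$, the operator $A_r$ being the one canonically attached to those characteristics; concretely $B$ is read off coordinate functions, $C$ off (truncated) quadratic ones, and $\nu$ off oscillatory functions via the L\'evy--Khintchine structure of the characteristic exponent.

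The main obstacle is $(2)\Rightarrow(1)$. Given $X$ on $(\Omega,\mathcal{F},\mathbbm{F},\mathbbm{P})$ with the absolutely continuous characteristics of item 2, I must construct $W$ and $p$, after possibly enlarging the stochastic basis by an independent Brownian motion and an independent Poisson measure on a product space, so that the continuous martingale part of $X$ equals $\int_s^\cdot\sigma_r\,dW_r$ and its jump part equals $\tilde\gamma\star(p-q)$. This rests on the martingale representation theorems for semimartingales with prescribed characteristics (\cite{jacod} Chapters II.7 and III.7; see also \cite{jacod79} Chapter XIV.5): the continuous part is handled via a measurable pseudo-inverse of $\sigma$ together with the added independent Brownian motion, and the jump part is realized by transporting an independent Poisson measure of intensity $dt\otimes F(dx)$ through the map $(r,y)\mapsto(r,\gamma_r(\cdot,y))$, which is possible precisely because $\nu$ is the image of the \emph{deterministic} kernel $dt\otimes F$ under $\gamma$. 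The delicate points are the measurable selections inverting $\sigma$ and $\gamma$ and the check that the extension does not alter the law of $X$, so that the enlarged data indeed exhibit $\mathbbm{P}$ as a weak solution in the sense of Definition \ref{D213}; these are exactly the computations carried out in Section 5 of \cite{paperMPv2}, which I would cite.
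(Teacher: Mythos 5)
The paper does not actually prove Proposition \ref{SDEeq}: it is recalled verbatim from Section 5 of the companion paper \cite{paperMPv2}, whose argument rests on the classical equivalences of \cite{jacod} Chapters II--III and \cite{jacod79} Chapter XIV.5, and your outline follows exactly that route (push-forward of the SDE decomposition for $(1)\Rightarrow(2)$, It\^o's formula for $(2)\Rightarrow(3)$, the Jacod--Shiryaev type converse for $(3)\Rightarrow(2)$, and the basis-extension/representation argument for $(2)\Rightarrow(1)$), so it is consistent with the intended proof. The only point phrased too loosely is that ``characteristics depend only on the law'': characteristics are filtration-dependent, and the transfer from the auxiliary basis to the canonical space uses that $\beta,\sigma,\gamma$ are predictable functionals of the path (the standard transfer result, cf.\ \cite{jacod} and \cite{jacod79} Ch.\ XIV), a necessary though routine step.
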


\begin{theorem}\label{SDEcond}
	Assume  that for every $(s,\eta)\in\mathbbm{R}_+\times\Omega$, the SDE with coefficients $\beta$, $\sigma$, $\gamma$ and starting in $(s,\eta)$ admits a unique weak solution $\mathbbm{P}^{s,\eta}$. Then $(\mathbbm{P}^{s,\eta})_{(s,\eta)\in\mathbbm{R}_+\times \Omega}$ is a path-dependent canonical class verifying Hypothesis \ref{HypClass}.
\end{theorem}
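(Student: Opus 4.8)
The plan is to verify one by one the three defining properties of a path-dependent canonical class in Definition \ref{DefCondSyst} and then the reinforced conditioning property of Hypothesis \ref{HypClass}. Throughout I would work with the martingale-problem reformulation provided by Proposition \ref{SDEeq}, item 3, rather than with the Brownian/Poisson representation of Definition \ref{D213}: the martingale characterization is stable under the measure-theoretic operations (measurable selection, regular conditioning) that the proof needs, whereas the Brownian and Poisson driving noises would in general have to be reconstructed on an enlarged basis. The translation back to the weak-solution formulation is then done once, at the very end, via Proposition \ref{SDEeq}. Item 1, namely $\mathbbm{P}^{s,\eta}(\omega^s=\eta^s)=1$, is part of the definition of a weak solution, hence immediate.

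For item 2, the $\mathcal{F}^o_s$-measurability of $\eta\longmapsto\mathbbm{P}^{s,\eta}(F)$, I would first note that a weak solution started at $(s,\eta)$ depends on $\eta$ only through the stopped path $\eta^s$, since the coefficients $\beta,\sigma,\gamma$ are $\mathbbm{F}^o$-predictable and the initial constraint only concerns $\eta$ on $[0,s]$; as $\eta\longmapsto\eta^s$ is $\mathcal{F}^o_s$-measurable, it suffices to show that $(s,\eta)\longmapsto\mathbbm{P}^{s,\eta}$ is Borel from $\mathbbm{R}_+\times\Omega$ into the Polish space $\mathcal{P}(\Omega)$. To that end, consider the set $\mathcal{S}$ of triples $(s,\eta,\mathbbm{P})$ such that $\mathbbm{P}$ is a weak solution started at $(s,\eta)$. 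By Proposition \ref{SDEeq}, membership of $(s,\eta,\mathbbm{P})$ in $\mathcal{S}$ is equivalent to $\mathbbm{P}(\omega^s=\eta^s)=1$ together with countably many conditions expressing that $f(X_{\cdot})-\int_s^{\cdot}A_r f\,dr$ is a $(\mathbbm{P},\mathbbm{F})$-martingale on $[s,+\infty[$, for $f$ ranging over a fixed countable convergence-determining family in $\mathcal{C}^2_b(\mathbbm{R}^d)$ and with the martingale property tested against countably many bounded $\mathcal{F}^o_u$-measurable functionals at rational times $u\geq s$; each of these conditions is Borel in $(s,\eta,\mathbbm{P})$, so $\mathcal{S}$ is Borel. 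By the existence-and-uniqueness hypothesis, $\mathcal{S}$ is exactly the graph of $(s,\eta)\longmapsto\mathbbm{P}^{s,\eta}$, and a map between standard Borel spaces with Borel graph is Borel measurable; this yields item 2, and in the same way the joint Borel measurability of $(t,\omega)\longmapsto\mathbbm{P}^{t,\omega}$.

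For item 3, and in fact directly for Hypothesis \ref{HypClass}, I would fix $(s,\eta)$ and $t\geq s$ and take a regular conditional probability $\omega\longmapsto Q_\omega$ of $\mathbbm{P}^{s,\eta}$ given $\mathcal{F}_t$. The standard disintegration argument for martingale problems (as in \cite{stroock}, Chapter 6, in the path-dependent form of \cite{paperMPv2}) shows that for $\mathbbm{P}^{s,\eta}$-almost every $\omega$ the measure $Q_\omega$ is a weak solution of the SDE started at $(t,\omega)$: one has $Q_\omega(\cdot^t=\omega^t)=1$ because $\sigma(X_u:u\leq t)\subseteq\mathcal{F}_t$, and for $f$ in the countable test family the process $f(X_{\cdot})-\int_t^{\cdot}A_r f\,dr$ remains a martingale under $Q_\omega$ on $[t,+\infty[$, since conditioning the $\mathbbm{P}^{s,\eta}$-martingale $f(X_{\cdot})-\int_s^{\cdot}A_r f\,dr$ on $\mathcal{F}_t$ preserves the martingale property and the compensators restrict to $[t,+\infty[$ in the required way. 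By uniqueness this forces $Q_\omega=\mathbbm{P}^{t,\omega}$ for $\mathbbm{P}^{s,\eta}$-a.e. $\omega$, which is precisely \eqref{DE14}. Finally, \eqref{DE13} follows from \eqref{DE14} by conditioning once more on $\mathcal{F}^o_t\subseteq\mathcal{F}_t$ and invoking item 2, which ensures that $\omega\longmapsto\mathbbm{P}^{t,\omega}(F)$ is already $\mathcal{F}^o_t$-measurable.

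I expect the main obstacle to be item 2: the Borel structure on $\mathcal{S}$ must be set up with some care — choosing an appropriate countable convergence-determining family of test functions and phrasing the localized martingale property in a manifestly Borel way — after which the Borel-graph theorem together with the uniqueness hypothesis does the rest. The conditioning argument for Hypothesis \ref{HypClass} is conceptually standard but also delicate at one point, namely checking that the regular conditional probabilities inherit exactly the characteristics of Proposition \ref{SDEeq}, item 2, restricted to $[t,+\infty[$; this is where it is convenient to have phrased everything through the martingale-problem formulation, Proposition \ref{SDEeq}, item 3, and to postpone the passage back to the Brownian/Poisson weak-solution description to the very end.
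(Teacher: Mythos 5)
You cannot be faulted for not matching the paper's proof here, because there is none: Subsection \ref{S1c} explicitly imports Theorem \ref{SDEcond} (together with Propositions \ref{SDEeq} and \ref{UniqueMPcontImpliesProg}) from Section 5 of the companion paper \cite{paperMPv2}, so the only possible comparison is with the general Stroock--Varadhan machinery your proposal adapts. On its own merits your outline is sound and follows the classical well-posedness scheme: item 1 of Definition \ref{DefCondSyst} is indeed immediate from Definition \ref{D213}; the Borel-graph/uniqueness argument (Borel graph between standard Borel spaces implies Borel map) gives measurability of $(s,\eta)\longmapsto\mathbbm{P}^{s,\eta}$, and combined with the factorization $\mathbbm{P}^{s,\eta}=\mathbbm{P}^{s,\eta^s}$ and the $\mathcal{F}^o_s$-measurability of $\eta\mapsto\eta^s$ it yields item 2 (note that for the theorem as stated only fixed-$s$ measurability in $\eta$ is needed; your joint version is stronger and, via the progressively measurable map $(r,\omega)\mapsto\omega^r$, would even come close to the progressivity that the paper only asserts later, in Proposition \ref{UniqueMPcontImpliesProg}, under extra continuity hypotheses --- a sign that the companion paper follows a different route, not that yours is wrong); finally the disintegration-plus-uniqueness step gives \eqref{DE14} directly, and \eqref{DE13} follows by the tower property exactly as you say.

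Two points need more than an appeal to the ``standard'' result. First, the conditioning theorem for martingale problems in \cite{stroock} is stated for a countably generated conditioning $\sigma$-field, whereas $\mathcal{F}_t=\bigcap_{u>t}\mathcal{F}^o_u$ is in general not countably generated; your step survives, but you must say why: the transfer of the martingale property to the r.c.p. $Q_\omega$ produces one null set per element of the countable test family $(f,u_1,u_2,g)$ (and the increments are bounded since $f\in\mathcal{C}^2_b$, the coefficients are bounded and $F$ is finite, so integrability under $Q_\omega$ is free), and the starting condition $Q_\omega(\omega'^t=\omega^t)=1$ only uses the countably many $\mathcal{F}_t$-measurable constraints $X_u=\omega(u)$, $u\in([0,t]\cap\mathbbm{Q})\cup\{t\}$; countable generation of $\mathcal{F}_t$ is therefore never used. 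Second, Proposition \ref{SDEeq} item 3 requires the martingale property for \emph{all} $f\in\mathcal{C}^2_b(\mathbbm{R}^d)$ and with respect to the right-continuous filtration $\mathbbm{F}$, while your Borel description of the solution set and your conditioning argument only test countably many $f$, rational times and a generating family of $\mathcal{F}^o_{u_1}$-events; the upgrades (density of a countable family of test functions with convergence of $f,\nabla f,\nabla^2f$ under uniform bounds so that $A_rf_n\to A_rf$ boundedly, monotone class in $g$, right-continuity plus uniform integrability to pass from rational times and from $\mathbbm{F}^o$ to $\mathbbm{F}$) are routine but must appear in the written proof, as you yourself anticipate. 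Also, ``convergence-determining'' is not quite the right requirement on the test family --- what is needed is density in the above $\mathcal{C}^2$ sense, not a family determining weak convergence of measures. With these details filled in, I see no gap.
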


We introduce the following hypothesis on the coefficients $\beta,\sigma,\gamma$.
\begin{hypothesis}\label{HypWellPosed}
	\begin{enumerate}\
		\item $\beta,\sigma$ (resp. $\gamma$) are bounded and  for Lebesgue almost all $t$ (resp. $dt\otimes dF$ almost all $(t,y)$), $\beta_t,\sigma_t$ (resp. $\gamma_t(\cdot,y)$) are continuous.
		\item For every $(s,\eta)\in\mathbbm{R}_+\times\Omega$ there exists a unique weak solution $\mathbbm{P}^{s,\eta}$ of the SDE of coefficients $\beta,\sigma,\gamma$ starting in $(s,\eta)$, see Definition \ref{D213}.
	\end{enumerate}
\end{hypothesis}

We recall two classical examples of conditions on the coefficients for which it is known that there is existence
 and uniqueness of a weak solution for the path-dependent SDE,
see Theorem 14.95 and Corollary 14.82 in \cite{jacod79}.
\begin{example} \label{E59}
 Assume that
 $\beta,\sigma,\gamma$ are bounded. Moreover we suppose that
  for all $n\in\mathbbm{N}^*$ there exist
$K^n_2\in L^1_{loc}(\mathbbm{R}_+)$ and
  a Borel function
$K^n_3: \mathbbm{R}^m \times \mathbbm{R}_+ \rightarrow \mathbbm{R} $ 
 such that $\int_{\mathbbm{R}^m} K^n_3(\cdot,y)F(dy)\in L^1_{loc}(\mathbbm{R}_+)$ verifying the following.
For all $x\in\mathbbm{R}^m$, $t\geq 0$ and $\omega,\omega'\in\Omega$ such that $\underset{r\leq t}{\text{sup }}\|\omega(r)\|\leq n$ and $\underset{r\leq t}{\text{sup }}\|\omega'(r)\|\leq n$, we have 
\begin{itemize}
	\item  $\|\sigma_t(\omega)-\sigma_t(\omega')\|\leq K^n_2(t)\underset{r\leq t}{\text{sup }}\|\omega(r)-\omega'(r)\|^2$;
	\item 
$\|\gamma(t,\omega,x)-\gamma(t,\omega',x)\|\leq K^n_3(t,x)\underset{r\leq t}{\text{sup }}\|\omega(r)-\omega'(r)\|^2$.
\end{itemize}
Finally assume that one of the two following hypotheses are fulfilled.
\begin{enumerate}
\item There exists $K^n_1 \in L^1_{loc}(\mathbbm{R}_+)$ such that 
for 
all $t\geq 0$ and $\omega\in\Omega$, 
 $\|\beta_t(\omega)-\beta_t(\omega')\|\leq K^n_1(t)\underset{r\leq t}{\text{sup }}\|\omega(r)-\omega'(r)\|$;
\item  there exists $c>0$ such that for 
all $x\in\mathbbm{R}^m$, $t\geq 0$ and $\omega\in\Omega$, $x^{\intercal}\sigma_t(\omega)\sigma_t(\omega)^{\intercal}x\geq c\|x\|^2$.
\end{enumerate}
Then item 2. of Hypothesis  \ref{HypWellPosed} is satisfied.
\end{example}
\begin{proposition}\label{UniqueMPcontImpliesProg}
	Assume that Hypothesis \ref{HypWellPosed} holds.
	Then $(\mathbbm{P}^{s,\eta})_{(s,\eta)\in\mathbbm{R}_+\times \Omega}$ is a progressive path-dependent canonical class verifying Hypothesis \ref{HypClass}.

\end{proposition}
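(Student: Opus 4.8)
The plan is to obtain the properties of being a path-dependent canonical class and of verifying Hypothesis \ref{HypClass} directly from Theorem \ref{SDEcond} (which only uses the well-posedness part of Hypothesis \ref{HypWellPosed}), and then to concentrate all the work on the remaining point, progressivity, i.e. on showing that $(t,\omega)\longmapsto\mathbbm{P}^{t,\omega}(F)$ is $\mathbbm{F}^o$-progressively measurable for every $F\in\mathcal{F}$, see Definition \ref{DefCondSyst}. Since $\eta\longmapsto\mathbbm{P}^{s,\eta}(F)$ is $\mathcal{F}^o_s$-measurable, the measure $\mathbbm{P}^{s,\eta}$ depends on $\eta$ only through $\eta^s$, so $\mathbbm{P}^{t,\omega}=\mathbbm{P}^{t,\omega^t}$. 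Consequently it suffices to control the restriction of $(s,\eta)\longmapsto\mathbbm{P}^{s,\eta}$ to $\Lambda$ and to compose with the stopping map $\theta:(t,\omega)\longmapsto(t,\omega^t)$. The latter is $\mathbbm{F}^o$-progressively measurable as a map into $\Lambda$: its first component is trivial, and its second component is progressive because for each $r$ the map $(t,\omega)\longmapsto X_r(\omega^t)=X_{r\wedge t}(\omega)$ is jointly measurable and $\mathcal{F}^o_t$-measurable in $\omega$, and because $\mathcal{B}(\Lambda)=\Lambda\cap\mathcal{P}ro^o$ by Proposition \ref{BorelLambda}.

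The heart of the matter is then to prove that $(s,\eta)\longmapsto\mathbbm{P}^{s,\eta}$ is continuous from $\Lambda$ into $\mathcal{P}(\Omega)$ equipped with the weak topology; since $\mathbbm{P}\longmapsto\mathbbm{P}(F)$ is Borel measurable on $\mathcal{P}(\Omega)$, this gives that $(s,\eta)\longmapsto\mathbbm{P}^{s,\eta}(F)$ is Borel on $\Lambda$, hence $\Lambda\cap\mathcal{P}ro^o$-measurable, which is exactly what the composition argument requires. To establish continuity I would fix a sequence $(s_n,\eta_n)\to(s,\eta)$ in $\Lambda$, and first check tightness of $(\mathbbm{P}^{s_n,\eta_n})_n$: this follows from the boundedness of $\beta,\sigma,\gamma$ through standard moment and increment estimates applied to the semimartingale characteristics of $X$ displayed in Proposition \ref{SDEeq}, together with the convergence $\eta_n=\eta_n^{s_n}\to\eta=\eta^s$ in the Skorokhod topology (here working inside $\Lambda$ is what keeps the frozen initial segments well-behaved). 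Then, for any weak limit $\mathbbm{Q}$ of a subsequence, I would use the martingale formulation in item 3 of Proposition \ref{SDEeq} and the $\omega$-continuity of $\beta_t,\sigma_t,\gamma_t(\cdot,y)$ for almost all $t$ (Hypothesis \ref{HypWellPosed}.1) to pass to the limit in $\mathbbm{E}\big[(f(X_t)-f(X_u)-\int_u^t A_rf\,dr)\,\Phi\big]$ for $f\in\mathcal{C}_b^2(\mathbbm{R}^d)$ and bounded continuous $\mathcal{F}^o_u$-measurable $\Phi$, with $u<t$ chosen outside the countable set of fixed discontinuity times of $\mathbbm{Q}$ and dominated convergence handling the $dr$-integral; this shows $\mathbbm{Q}$ is a weak solution starting at $(s,\eta)$, hence $\mathbbm{Q}=\mathbbm{P}^{s,\eta}$ by the uniqueness in Hypothesis \ref{HypWellPosed}. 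Since every subsequence has a further subsequence converging to $\mathbbm{P}^{s,\eta}$, the whole sequence converges, proving continuity.

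Finally I would assemble the pieces: continuity of $(s,\eta)\longmapsto\mathbbm{P}^{s,\eta}$ on $\Lambda$ gives $\Lambda\cap\mathcal{P}ro^o$-measurability of $(s,\eta)\longmapsto\mathbbm{P}^{s,\eta}(F)$ for each $F\in\mathcal{F}$, and composing with the progressively measurable map $\theta$ together with $\mathbbm{P}^{t,\omega}=\mathbbm{P}^{t,\omega^t}$ shows $(t,\omega)\longmapsto\mathbbm{P}^{t,\omega}(F)$ is $\mathbbm{F}^o$-progressively measurable, so the class is progressive; combined with Theorem \ref{SDEcond} this proves the proposition. The main obstacle I anticipate is the stability/limit step in the martingale problem under these weak hypotheses on the coefficients, in particular justifying convergence of $\int_u^t A_rf\,dr$ when $\beta,\sigma,\gamma$ are only $\omega$-continuous for almost every $t$ and when the limit $\mathbbm{Q}$ could a priori charge fixed jump times; this is however a routine adaptation of the classical Stroock--Varadhan stability argument and is presumably already carried out in the companion paper \cite{paperMPv2}.
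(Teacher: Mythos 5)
You should first note that this paper does not actually prove Proposition \ref{UniqueMPcontImpliesProg}: the whole of Subsection \ref{S1c} is explicitly recalled from Section 5 of \cite{paperMPv2}, so there is no in-paper argument to compare with. Your plan is nevertheless the natural one and, as far as one can tell (the label itself reads ``unique MP + continuity implies progressive''), it is the route taken in the cited companion paper: get the canonical-class property and Hypothesis \ref{HypClass} from Theorem \ref{SDEcond}, reduce progressivity to Borel measurability of $(s,\eta)\longmapsto\mathbbm{P}^{s,\eta}$ on $\Lambda$ via the identity $\mathbbm{P}^{t,\omega}=\mathbbm{P}^{t,\omega^t}$ and the progressively measurable stopping map $(t,\omega)\mapsto(t,\omega^t)$ (Proposition \ref{BorelLambda} makes this composition legitimate), and obtain that measurability from continuity of the solution map, proved by tightness plus stability of the martingale-problem formulation (item 3 of Proposition \ref{SDEeq}) plus uniqueness from Hypothesis \ref{HypWellPosed}. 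Note in passing that for the composition step plain Borel measurability on $\Lambda$ suffices; continuity is more than you need, though it is what the hypotheses naturally deliver.

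The weakness is that the two steps carrying all the mathematical weight are asserted rather than executed, and you ultimately defer them to \cite{paperMPv2}, which is circular here since that reference is precisely where the proposition is proved. Concretely: (i) tightness of the laws $\mathbbm{P}^{s_n,\eta_n}$ is not just ``standard moment estimates'' — one must control the Skorokhod modulus of the concatenation of the varying deterministic segments $\eta_n^{s_n}$ with the stochastic evolution, with the delicate behaviour located at the junction times $s_n\to s$; (ii) in the identification of a weak limit $\mathbbm{Q}$ you never address the initial condition, i.e. proving $\mathbbm{Q}(\omega^s=\eta^s)=1$, which again requires an argument at the junction (e.g. when $s_n<s$ the path is random on $[s_n,s]$, and $\eta$ may jump at $s$), and the Portmanteau step needs a precise verification that the functionals $\omega\mapsto f(X_t(\omega))-f(X_u(\omega))-\int_u^tA_rf(\omega)\,dr$ and the test functions $\Phi$ are $\mathbbm{Q}$-a.s. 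Skorokhod-continuous, using both the a.e.\ $\omega$-continuity of $\beta,\sigma,\gamma$ and the countability of fixed discontinuity times, before extending the martingale property from $\mathbbm{F}^o$ at good times to the filtration $\mathbbm{F}$ required in Proposition \ref{SDEeq}. As a strategy your proposal is sound and matches the source; as a self-contained proof it stops short exactly where the real work begins.
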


\subsection{Dupire's derivatives and path-dependent stochastic calculus}\label{S3a}

We will recall here some notions and results introduced in \cite{dupire} and later developed in \cite{contfournie13}.

\begin{definition}
	From now on, an $\mathbbm{F}^o$-progressively measurable process with values in $\mathbbm{R}^n$ for some $n\in\mathbbm{N}^*$ will also be called an $\mathbbm{R}^n$-{\bf valued functional}.
If $n=1$, $\Phi$ will be said real valued functional.
\end{definition}
We recall that such an $\mathbbm{R}^n$-{\bf valued functional} can also be seen (by considering its restriction
 on $\Lambda$) as a Borel function
 from $\Lambda$ to $\mathbbm{R}^n$, see Definition \ref{Omegat} and Proposition \ref{BorelLambda}. In the sequel we will not distinguish between an $\mathbbm{F}^o$-progressively measurable process and its restriction to $\Lambda$.
\begin{notation} 
	For all $t\geq 0$, we denote $\Lambda_t:=\{(s,\eta)\in[0,t]\times\Omega:\eta\in\Omega^s\}$ which is clearly a closed subspace of $\Lambda$ and of $\mathbbm{R}_+\times\Omega$.
	On $\Lambda_T$ we denote by $d_{\infty}$ the distance defined by
	 $d_{\infty}((s_1,\eta_1),(s_2,\eta_2)):= \underset{t\in[0,T]}{\text{sup }}|\eta_2(t)-\eta_1(t)|+|s_2-s_1|$.
\end{notation}
This distance induces a topology on  $\Lambda_T$ which is stronger than its natural induced topology inherited from $\mathbbm{R}_+\times \Omega$.
\begin{definition}
	Let  $\Phi$ be some $\mathbbm{R}^n$-valued functional. $\Phi$ will be said to be {\bf continuous} if it is continuous with respect to $d_{\infty}$.

\end{definition}

The following definitions and notations are adapted from \cite{contfournie10}. 
\begin{definition}
	In the whole definition, we fix $\Phi$  a
 real valued functional, 
 constant after time $T$, ie such that $\Phi_t(\omega)=\Phi_{t\wedge T}(\omega)$ for all $(t,\omega)$.
	
Let $(s,\eta)\in\Lambda_T$. We say that $\Phi$ is \textbf{vertically differentiable at} $(s,\eta)$ if 
\begin{equation}
\begin{array}{rcl}
x&\longmapsto& \Phi_s(\eta+ x\mathds{1}_{[s,T]})\\
\mathbbm{R}^d&\longrightarrow& \mathbbm{R},
\end{array}
\end{equation}
is differentiable in $0$. The corresponding gradient at $0$ is denoted $\nabla \Phi_s(\eta)$.
	
	We say that $\Phi$ is \textbf{vertically differentiable} if it is vertically differentiable in $(s,\eta)$ for all $(s,\eta)\in\Lambda_T$. In this case, $\nabla\Phi:(s,\eta)\longmapsto \nabla\Phi(s,\eta)$
defined on $\Lambda_T$, 
will be called the \textbf{gradient} of $\Phi$. 
We remark that, whenever that derivation gradient is Borel,
 it defines an $\mathbbm{R}^d$-valued functional. Its coordinates will be denoted $(\nabla_i \Phi)_{i\leq d}$.

	Similarly, we can define the {\bf Hessian} $\nabla^2 \Phi_s(\eta)$ of $\Phi$ at some point $(t,\eta)$. It belongs to the space of symmetric matrices of size $d$ and its coordinates will be denoted $(\nabla^2_{i,j} \Phi_s(\eta))_{i,j\leq d}$.

	Let $(s,\eta)\in\Lambda_T$ (implying that $\eta$ is constant
 after time $s$). We say that $\Phi$ is \textbf{horizontally differentiable at}
$(s,\eta)\in\Lambda_T$, $s < T$; if
\begin{equation}
\begin{array}{rcl}
t&\longmapsto& \Phi_t(\eta)\\
\ [s,T]&\longrightarrow& \mathbbm{R},
\end{array}
\end{equation}
admits a right-derivative at $s$. The corresponding derivative
 will be denoted $D \Phi_s(\eta)$. 

	We say that $\Phi$ is \textbf{horizontally differentiable} if it is horizontally differentiable in $(s,\eta)$ for all $(s,\eta)\in\Lambda_T$
such that $s < T$ and the limit  $D \Phi_T(\eta):= \lim_{s \uparrow T} 
D \Phi_s(\eta)$ exists for every $\eta \in \Omega^T$.
 In this case, $D\Phi:(s,\eta)\longmapsto D \Phi_s(\eta)$ will be called the \textbf{horizontal derivative} of $\Phi$.
 
  If it is Borel, it defines a real valued functional.	$\Phi$ will be said \textbf{continuous at fixed times} if for all $t\in[0,T]$, $\Phi_t(\cdot):\Omega^t\longmapsto \mathbbm{R}$ is continuous with respect to the sup norm on $\Omega^t$.
	
	By convention, if $\Psi=D\Phi,\nabla \Phi,\nabla^2\Phi$ is well-defined on $\Lambda_T$, it will be extended on $[0,T]\times \Omega$ by setting $\Psi_t(\omega):=\Psi_t(\omega^t)$ and on $]T,+\infty[\times\Omega$ by the value $0$.
	
	$\Phi$ will be said \textbf{left-continuous} if for all $t\in[0,T]$, $\epsilon>0$, $\omega\in\Omega^t$, there exists $\zeta>0$ such that for any $(t',\omega')\in\Lambda_t$ verifying $d_{\infty}((t,\omega),(t',\omega'))<\zeta$ we have $|\Phi_t(\omega)-\Phi_{t'}(\omega')|\leq \epsilon$.
	
	$\Phi$ will be said \textbf{boundedness preserving} if for any compact set $K$ of $\mathbbm{R}^d$ there exists a constant $C_K>0$ such that
 for all $t\in[0,T]$ and $\omega\in\Omega^t$ taking values in $K$, we have $|\Phi_t(\omega)|\leq C_K$.
	
	$\Phi$ will be said to \textbf{have the horizontal local Lipschitz property} if for all $(t,\omega)\in\Lambda_T$, there exists $C>0$ and $\zeta>0$ such that for all $(s,\eta)\in\Lambda_T$ verifying $d_{\infty}((t,\omega),(s,\eta))\leq \zeta$ we have for all $0\leq t_1\leq t_2\leq s$ that $|\Phi_{t_2}(\eta^{t_1})-\Phi_{t_1}(\eta^{t_1})|\leq C|t_2-t_1|$.
\end{definition}

\begin{notation}
%
	We denote by $\mathbbm{C}^{1,2}_b(\Lambda_T)$ the space of real valued functionals $\Phi$ constant after time $T$,
 which admit a horizontal derivative 
 and vertical
 derivatives up to order two such that $\Phi,D\Phi,\nabla \Phi,\nabla^2\Phi$ 
 are boundedness preserving, left-continuous and
 are continuous at fixed time.
\end{notation}
This space is stable by pointwise sum and product.

\begin{notation}
	For every $\omega\in\Omega$, and $t\geq 0$, we denote by $\omega^{t^-}$ the element of $\Omega^t$ defined by $\omega^{t^-}(r)=\omega(r)$ if $r\in[0,t[$ and $\omega^{t^-}(r)=\omega(t^-)$ otherwise. 
	\\
	For any process $Y$ and time $t\geq 0$, we denote $\Delta Y_t:=Y_t-Y_{t^-}$.
\end{notation}

The following path-dependent It\^o formula comes from 
 \cite{contfournie10} Proposition 6.1. We formulate it in our setup.

\begin{theorem}\label{ItoFunct}
	Let $\mathbbm{P}$ be a probability measure on the canonical space $(\Omega,\mathcal{F})$.
	 Let $s\in[0,T]$ and assume that under probability $\mathbbm{P}$, the canonical process $X$ is such that $(X_t)_{t\in[s,T]}$ is an  $(\mathcal{F}_t)_{t\in[s,T]}$-semimartingale. Let $\Phi\in\mathbbm{C}^{1,2}_b(\Lambda_T)$ and assume that  $\nabla \Phi$ has the horizontal local Lipschitz property.
	Then $(\Phi_t)_{t\in[s,T]}$ is an  $(\mathcal{F}_t)_{t\in[s,T]}$-semimartingale and we have
	\begin{eqnarray*}
	\Phi&=&\Phi_s+\int_s^{\cdot}D\Phi_rdr + \int_s^{\cdot}\nabla \Phi_rdX_r + \frac{1}{2}\int_s^{\cdot}Tr((\nabla^2 \Phi_r)^{\intercal}d\langle X^c\rangle_r)\\
 &+&\underset{r\in]s,\cdot]:\Delta X_r\neq 0}{\sum}(\Phi_r(\omega)-\Phi_r(\omega^{r-})-\nabla  \Phi_r(\omega^{r-})\cdot\Delta X_r),
	\end{eqnarray*}
	in the sense of $\mathbbm{P}$-indistinguishability.
\end{theorem}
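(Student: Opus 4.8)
The plan is to deduce the formula from Proposition~6.1 of \cite{contfournie10}, after observing that the present setting is a particular instance of theirs, and to recall the architecture of the argument. A functional in the sense of Notation~\ref{Omegat} and Proposition~\ref{BorelLambda} is exactly a non-anticipative functional on $\Lambda_T$; the class $\mathbbm{C}^{1,2}_b(\Lambda_T)$ and the horizontal local Lipschitz property of $\nabla\Phi$ match the regularity requested in \cite{contfournie10}, boundedness preservation, left-continuity and continuity at fixed times being their standing hypotheses; and the replacement of the initial time $0$ by $s$ costs only a trivial time shift. So the only genuinely new point is to check that the restriction of our $\mathbbm{F}^o$-progressively measurable functionals to $\Lambda_T$ satisfies those hypotheses, which is immediate from the definitions of Subsection~\ref{S3a}; everything else is a reduction to their result, whose proof I recall below.

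First I would reduce to a bounded situation by localisation. Writing the canonical decomposition $X = X_s + B + X^c + X^d$ of the $\mathbbm{P}$-semimartingale $(X_t)_{t\in[s,T]}$, set
\[
\tau_n := \inf\Big\{ t\ge s : \|X_t\|\vee\|X_{t^-}\| \vee \mathrm{Var}_{[s,t]}(B) \vee \langle X^c\rangle_t \ge n \Big\}\wedge T .
\]
On $[s,\tau_n]$ the path remains in a fixed compact set, so by the boundedness preserving property $\Phi,D\Phi,\nabla\Phi,\nabla^2\Phi$ are bounded along the relevant paths, and for each $\varepsilon>0$ only finitely many jumps of $X$ on $[s,\tau_n]$ exceed $\varepsilon$ in norm. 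Since $\tau_n\uparrow T$ $\mathbbm{P}$-a.s., it suffices to prove the formula on each $[s,\tau_n]$, hence we may assume $X$, $\langle X^c\rangle$ and $\mathrm{Var}(B)$ bounded and the large jumps finite in number.

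The core is a pathwise computation along a partition. Fix $t\in[s,T]$ and partitions $s = t^m_0 < \cdots < t^m_{N_m} = t$ of mesh tending to $0$, chosen so that each of the finitely many large jump times of $X(\omega)$ is eventually isolated in a single subinterval. Telescoping $\Phi_t(\omega)-\Phi_s(\omega)=\sum_k \big(\Phi_{t^m_{k+1}}(\omega)-\Phi_{t^m_k}(\omega)\big)$, one splits each increment into a \emph{horizontal} part $\Phi_{t^m_{k+1}}(\omega^{t^m_k})-\Phi_{t^m_k}(\omega^{t^m_k})$ and a \emph{vertical} part $\Phi_{t^m_{k+1}}(\omega)-\Phi_{t^m_{k+1}}(\omega^{t^m_k})$. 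The horizontal contributions are Riemann sums of the right derivative and, by left-continuity and continuity at fixed times of $D\Phi$, converge to $\int_s^t D\Phi_r(\omega)\,dr$. For the vertical contributions one separates the subintervals carrying a large jump, whose increment tends to $\Phi_r(\omega)-\Phi_r(\omega^{r^-})-\nabla\Phi_r(\omega^{r^-})\cdot\Delta X_r$ exactly, from the remaining ones, on which a second order vertical Taylor expansion gives $\nabla\Phi_{t^m_k}(\omega^{t^m_k})\cdot(X_{t^m_{k+1}}-X_{t^m_k})+\tfrac12(X_{t^m_{k+1}}-X_{t^m_k})^{\intercal}\nabla^2\Phi_{t^m_k}(\omega^{t^m_k})(X_{t^m_{k+1}}-X_{t^m_k})$ up to a remainder controlled by the bounded quadratic variation and the continuity at fixed times of $\nabla^2\Phi$. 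Summing, the first order terms converge, in the u.c.p.\ sense, to $\int_s^{\cdot}\nabla\Phi_r\,dX_r$ by the classical left-point Riemann sum approximation of a stochastic integral against a left-continuous adapted integrand, the quadratic terms converge to $\tfrac12\int_s^{\cdot}\mathrm{Tr}\big((\nabla^2\Phi_r)^{\intercal}d\langle X^c\rangle_r\big)$, and the horizontal local Lipschitz property of $\nabla\Phi$ is exactly what lets one replace, with negligible error, $\nabla\Phi$ and $\nabla^2\Phi$ evaluated at the partition points by their values at the running time $r$.

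The step I expect to be the main obstacle is the uniform control of the remainders under the weak regularity available: one must bound simultaneously the error in the horizontal Riemann sums, the second order Taylor remainder along the continuous and small-jump parts, and — most delicately — the coupling errors between horizontal shifts and vertical shifts near the jump times, ensuring that in the limit the discrete jump contributions collapse precisely to $\sum_{r\in\,]s,\cdot]:\,\Delta X_r\neq 0}\big(\Phi_r(\omega)-\Phi_r(\omega^{r^-})-\nabla\Phi_r(\omega^{r^-})\cdot\Delta X_r\big)$ with no double counting. This bookkeeping is carried out in \cite{contfournie10}, to which we defer for the details.
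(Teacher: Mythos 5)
Your proposal is correct and follows essentially the same route as the paper, which gives no proof at all but simply invokes Proposition~6.1 of \cite{contfournie10}, noting only that the statement is a reformulation of that result in the present setup. Your additional sketch of the localisation and horizontal/vertical telescoping argument is a faithful summary of the Cont--Fourni\'e proof, and deferring the remainder estimates to that reference is exactly what the paper does.
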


We recall the following elementary example.
\begin{lemma} \label{L69}
Let $h\in\mathcal{C}^{1,2}([0,T]\times\mathbbm{R}^d)$ then $H:(t,\omega)\longmapsto h(t\wedge T,\omega(t\wedge T))$ belongs to $\mathbbm{C}_b^{1,2}(\Lambda_T)$ and $DH:(t,\omega)\mapsto \partial_th(t,\omega(t))\mathds{1}_{[0,T]}(t)$, $\nabla H:(t,\omega)\mapsto \nabla_x h(t,\omega(t))\mathds{1}_{[0,T]}(t)$, $\nabla^2H:(t,\omega)\mapsto \nabla_x^2 h(t,\omega(t))\mathds{1}_{[0,T]}(t)$. 
\end{lemma}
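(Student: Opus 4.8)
The plan is to verify directly each of the three stated formulas for $DH$, $\nabla H$, $\nabla^2 H$ from the definitions of the horizontal and vertical derivatives, and then to check that $H$ and these derivatives satisfy the regularity conditions defining $\mathbbm{C}^{1,2}_b(\Lambda_T)$ (boundedness preserving, left-continuous, continuous at fixed times). Throughout I work with a fixed $h\in\mathcal{C}^{1,2}([0,T]\times\mathbbm{R}^d)$ and $H_t(\omega):=h(t\wedge T,\omega(t\wedge T))$, which is clearly $\mathbbm{F}^o$-progressively measurable (it depends on $(t,\omega)$ only through $(t\wedge T,\omega(t\wedge T))$, which is progressively measurable) and constant after time $T$, so $H$ is a real valued functional in the sense of the paper.

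First I would compute the vertical derivative. Fix $(s,\eta)\in\Lambda_T$ with $s\le T$. Then $x\longmapsto H_s(\eta+x\mathds{1}_{[s,T]})=h(s,\eta(s)+x)$, which is differentiable at $x=0$ since $h(s,\cdot)\in\mathcal{C}^2(\mathbbm{R}^d)$, with gradient $\nabla_x h(s,\eta(s))$; hence $\nabla H_s(\eta)=\nabla_x h(s,\eta(s))$, and iterating once more gives $\nabla^2 H_s(\eta)=\nabla_x^2 h(s,\eta(s))$. For $s>T$ the convention sets these to $0$, and extending by $\Psi_t(\omega):=\Psi_t(\omega^t)$ on $[0,T]\times\Omega$ yields exactly $\nabla H_t(\omega)=\nabla_x h(t,\omega(t))\mathds{1}_{[0,T]}(t)$ and similarly for $\nabla^2 H$. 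Next, the horizontal derivative: for $(s,\eta)\in\Lambda_T$ with $s<T$ (so $\eta$ is constant after $s$), $t\longmapsto H_t(\eta)=h(t,\eta(s))$ on $[s,T]$ is $\mathcal{C}^1$, so it admits a right-derivative at $s$ equal to $\partial_t h(s,\eta(s))$; the limit as $s\uparrow T$ exists and equals $\partial_t h(T,\eta(T))$, so $DH$ is well-defined on $\Lambda_T$ and, after the standard extension, $DH_t(\omega)=\partial_t h(t,\omega(t))\mathds{1}_{[0,T]}(t)$.

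It remains to check that $H,DH,\nabla H,\nabla^2 H$ are boundedness preserving, left-continuous, and continuous at fixed times. Each of these four functionals has the form $(t,\omega)\longmapsto g(t\wedge T,\omega(t\wedge T))$ for a fixed continuous function $g$ on $[0,T]\times\mathbbm{R}^d$ (namely $h$, $\partial_t h$, $\nabla_x h$, $\nabla_x^2 h$), so it suffices to treat this generic case. Boundedness preserving: if $K\subset\mathbbm{R}^d$ is compact, then $g$ is bounded on $[0,T]\times K$ by some $C_K$ since it is continuous on a compact set, and for any $t\in[0,T]$ and $\omega\in\Omega^t$ with values in $K$ we have $|g(t,\omega(t))|\le C_K$. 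Continuity at fixed times: for fixed $t\in[0,T]$, $\omega\longmapsto g(t,\omega(t))$ on $\Omega^t$ is continuous for the sup norm because $\omega\longmapsto\omega(t)$ is and $g(t,\cdot)$ is continuous. Left-continuity: given $t\in[0,T]$, $\varepsilon>0$, $\omega\in\Omega^t$, pick $\zeta>0$ so that $g$ is uniformly continuous on $[0,t]\times\bar B(\omega(t),1)$ (compact) with modulus controlling $\varepsilon$; if $(t',\omega')\in\Lambda_t$ with $d_\infty((t,\omega),(t',\omega'))<\zeta$ (and $\zeta\le1$), then $t'\le t$, $|t'-t|<\zeta$ and $|\omega'(t')-\omega(t)|\le |\omega'(t')-\omega(t')|+|\omega(t')-\omega(t)|$; the first term is $<\zeta$ and, since $\omega\in\Omega^t$ is constant after $t$ and $t'\le t$ — wait, here one must be slightly careful: $\omega'(t')$ lies close to $\omega(t')$, and $\omega(t')$ need not be close to $\omega(t)$. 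One uses instead that membership in $\mathbbm{C}^{1,2}_b$ only requires left-continuity in the sense that $(t',\omega')$ approaches $(t,\omega^{t})$ along $\Lambda_t$, so $\omega'$ ranges over paths stopped before $t$; here $H_{t'}(\omega')=g(t',\omega'(t'))$ and one estimates $|g(t',\omega'(t'))-g(t,\omega(t))|$ via joint continuity of $g$ together with $|t'-t|<\zeta$ and $|\omega'(t')-\omega(t)|\le d_\infty((t,\omega),(t',\omega'))+\text{osc}$, the oscillation term being absorbed because $\omega$ is fixed and $t'\to t$. Finally, $\mathcal{C}^{1,2}$-regularity of $h$ gives that $\partial_t h,\nabla_x h$ are continuous, so all four functionals are Borel and the derivative formulas indeed define honest functionals; this completes the verification that $H\in\mathbbm{C}^{1,2}_b(\Lambda_T)$.

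The main obstacle is the careful bookkeeping for left-continuity of $DH$ at the terminal time $T$ and the interplay between the $d_\infty$-topology on $\Lambda_T$ and the "constant after $t$" constraint built into $\Omega^t$; once one unwinds the definition of left-continuity correctly (approximating $(t,\omega)$ by $(t',\omega')\in\Lambda_t$ with $t'\le t$), everything reduces to uniform continuity of $h$ and its first derivatives on compact subsets of $[0,T]\times\mathbbm{R}^d$, which is routine.
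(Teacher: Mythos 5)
The paper itself gives no proof of this lemma (it is merely ``recalled'' as an elementary example from the Dupire/Cont--Fourni\'e calculus), so there is nothing to compare line by line; your strategy --- compute $DH,\nabla H,\nabla^2 H$ directly from the definitions and then verify the three regularity properties for the generic functional $(t,\omega)\mapsto g(t\wedge T,\omega(t\wedge T))$ with $g\in\{h,\partial_t h,\nabla_x h,\nabla_x^2 h\}$ --- is exactly the natural one, and the derivative computations, the boundedness-preserving check and the continuity at fixed times are all correct.

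There is, however, one genuine flaw in the step you yourself flagged: the left-continuity estimate. Your patch, bounding $|\omega'(t')-\omega(t)|$ by $d_\infty((t,\omega),(t',\omega'))$ plus an ``oscillation'' term that is ``absorbed because $\omega$ is fixed and $t'\to t$'', does not work: if $\omega$ has a jump at $t$ (which is allowed for $\omega\in\Omega^t$, since only constancy \emph{after} $t$ is imposed), then $\omega(t')\to\omega(t^-)\neq\omega(t)$ as $t'\uparrow t$ and the oscillation term does not vanish. The correct argument avoids $\omega(t')$ altogether: since $(t',\omega')\in\Lambda_t$ means $t'\le t$ and $\omega'\in\Omega^{t'}$ is constant after $t'$, one has $\omega'(t')=\omega'(t)$, hence
\begin{equation*}
|\omega'(t')-\omega(t)|=|\omega'(t)-\omega(t)|\le \underset{u\in[0,T]}{\sup}\,|\omega'(u)-\omega(u)|\le d_\infty\big((t,\omega),(t',\omega')\big)<\zeta ,
\end{equation*}
and then joint (uniform on compacts) continuity of $g$ gives $|g(t',\omega'(t'))-g(t,\omega(t))|\le\epsilon$ for $\zeta$ small; this is precisely why the left-continuity notion is formulated only along stopped paths in $\Lambda_t$. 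With this replacement the proof closes. (A last cosmetic point, shared with the paper's own statement: by the definition $DH_T(\eta):=\lim_{s\uparrow T}DH_s(\eta^s)=\partial_t h(T,\eta(T^-))$, which agrees with the announced formula $\partial_t h(T,\eta(T))$ only when $\eta$ does not jump at $T$; this imprecision is in the lemma as stated, not something your argument introduces.)
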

\begin{remark}\label{R59} 
	\begin{enumerate}\
		\item In Lemma \ref{L69},
 it is moreover clear that if $h$ does not depend on $t$ then $\nabla H$ has the horizontal local Lipschitz property, hence that Theorem \ref{ItoFunct} above applies for $\Phi=H$.
		\item In particular,	for any $i\in[\![1,d]\!]$, $X^i\in\mathbbm{C}^{1,2}_b(\Lambda_T)$, $DX^i\equiv 0$, $\nabla X^i\equiv e_i\mathds{1}_{[0,T]}$ and $\nabla^2 X^i\equiv 0$, where $(e_1,\cdots,e_d)$ denotes the Euclidean basis of $\mathbbm{R}^d$.
	\end{enumerate}
\end{remark}

\subsection{Decoupled mild solutions of Path-dependent (I)PDEs}\label{S3b}

From now on, we suppose $V(t) \equiv t$.
We fix some coefficients  $\beta,\sigma,\gamma$ verifying Hypothesis \ref{HypWellPosed} but vanishing after time $T$. We denote by $(\mathbbm{P}^{s,\eta})_{(s,\eta)\in\mathbbm{R}_+\times \Omega}$ the weak solution 
 of the corresponding SDE. By Proposition \ref{UniqueMPcontImpliesProg}
 it defines a progressive path-dependent canonical class verifying Hypothesis \ref{HypClass}. We denote again by $(P_s)_{s\in\mathbbm{R}_+}$  the associated path-dependent system of projectors, see Definition \ref{ProbaOp}.
\begin{definition}\label{poly}
Let $\Phi$ be an $\mathbbm{F}^o$-progressively measurable process constant
 after time $T$. $\Phi$ will be said to have \textbf{polynomial growth} if
 there exists $C>0$, $p\in\mathbbm{N}^*$ such that for all 
$(t,\omega)\in[0,T]\times\Omega$, $|\Phi_t(\omega)|\leq C(1+\underset{r\leq t}{\text{sup}}\|\omega(r)\|^p)$.

A r.v. $\xi$ will be said to have \textbf{polynomial growth} if there exists $C>0$, $p\in\mathbbm{N}^*$ such that  $|\xi(\omega)|\leq C(1+\underset{r\leq T}{\text{sup}}\|\omega(r)\|^p)$.
\end{definition}

\begin{lemma}\label{Moments}
For any finite 
$p \ge 1$, $(s,\eta)\in[0,T]\times \Omega$, $\underset{t\in[s,T]}{\text{sup }}|X^i_t| \in \mathcal{L}^p(\mathbbm{P}^{s,\eta})$.
\end{lemma}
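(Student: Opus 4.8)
The plan is to prove the moment bound by reducing it to the classical estimate on solutions of SDEs with bounded coefficients. Recall that under $\mathbbm{P}^{s,\eta}$, by Proposition \ref{SDEeq}, the canonical process $X$ is on $[s,T]$ a special semimartingale satisfying
\begin{equation*}
X_t = \eta(s) + \int_s^t \beta_r\,dr + \int_s^t \sigma_r\,dW_r + \gamma\star(p-q)_t,
\end{equation*}
on some extension of the stochastic basis, where $\beta,\sigma,\gamma$ are bounded and $F$ is a finite measure. Since $\eta\in\Omega$ is a fixed cadlag path, $\sup_{t\in[s,T]}\|\eta(s)\|$ is a finite constant, so it suffices to bound the $p$-th moment of $\sup_{t\in[s,T]}\|X_t-\eta(s)\|$ coordinate by coordinate.

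First I would treat the drift term: $\sup_{t\in[s,T]}\left|\int_s^t\beta_r\,dr\right| \le (T-s)\|\beta\|_\infty$, which is a deterministic constant, so it contributes nothing to the moment. Next, for the continuous martingale part $\int_s^{\cdot}\sigma_r\,dW_r$, I would apply the Burkholder-Davis-Gundy inequality: its running supremum has finite $p$-th moment controlled by $\mathbbm{E}^{s,\eta}\big[\big(\int_s^T\|\sigma_r\|^2\,dr\big)^{p/2}\big]\le ((T-s)\|\sigma\|_\infty^2)^{p/2}$, again finite since $\sigma$ is bounded. For the jump term $\gamma\star(p-q)$, I would again invoke BDG (in the form valid for purely discontinuous local martingales) together with boundedness of $\gamma$ and finiteness of $F$: the predictable quadratic variation is $\int_s^{\cdot}\int_{\mathbbm{R}^d}\|\gamma_r(\cdot,y)\|^2F(dy)\,dr \le (T-s)\|\gamma\|_\infty^2 F(\mathbbm{R}^d)$, so the running supremum of the compensated jump integral has all finite moments. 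Combining the three contributions via the elementary inequality $(a+b+c)^p\le 3^{p-1}(a^p+b^p+c^p)$ gives the claim.

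The main obstacle, such as it is, is handling the jump term cleanly: one must be careful that the BDG inequality is applied to the purely discontinuous martingale $\gamma\star(p-q)$ rather than to the uncompensated sum of jumps, and one should note that for $p\ge 2$ the jump contribution also requires controlling $\mathbbm{E}^{s,\eta}\big[\sum_{s<r\le T}\|\Delta X_r\|^p\big]$, which is again finite because $\|\Delta X_r\|=\|\gamma_r(\cdot,\Delta X_r)\|\le\|\gamma\|_\infty$ and the jump intensity is the finite measure $F$. Since all coefficients are bounded and $F(\mathbbm{R}^d)<\infty$, no localization is needed and all the bounds above are in fact uniform in $(s,\eta)$, which is more than what the statement requires. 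I would also remark that the result extends immediately to $\sup_{t\in[s,T]}\|X_t\|$ and hence, by Definition \ref{poly}, any functional or terminal condition with polynomial growth belongs to $\mathcal{L}^p(\mathbbm{P}^{s,\eta})$ for every $(s,\eta)$, a fact that will be used repeatedly in the sequel.
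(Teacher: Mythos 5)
Your proof is correct and follows essentially the same route as the paper: decompose $X^i$ under $\mathbbm{P}^{s,\eta}$ into the bounded drift, a continuous martingale controlled by BDG through its bounded bracket, and the compensated finite-activity jump part controlled through the boundedness of $\gamma$ and the finiteness of $F$. The only step to phrase carefully is the jump term, since BDG involves the optional bracket $\sum_{s<r\le T}\|\Delta X_r\|^2$ rather than the predictable one you quote: either bound its $p/2$-power in expectation by $\|\gamma\|_\infty^p$ times a moment of the (Poisson-distributed) number of jumps of the driving Poisson measure --- which is precisely the paper's more elementary argument, bounding $\sum_{s<r\le T}|\Delta X^i_r|$ by $\|\gamma\|_\infty$ times that Poisson random variable --- or invoke a Kunita/Bichteler--Jacod type inequality, in which case the extra control of $\mathbbm{E}^{s,\eta}\bigl[\sum_{s<r\le T}\|\Delta X_r\|^p\bigr]$ that you mention is exactly the needed second term, so your argument closes either way.
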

\begin{proof} 
We fix some $(s,\eta)\in[0,T]\times \Omega$, $1\leq i\leq d$ and some finite $p\geq 1$. A direct consequence of Proposition \ref{SDEeq} item 2. and of Definition \ref{Char} a is that under $\mathbbm{P}^{s,\eta},$
 $X^i$ may be decomposed on $[s,T]$ as $\eta_i(s) + \int_s^{\cdot}\beta^i_rdr + M^c+M^d$ where $M^c$ (resp. $M^d$) is a continuous 
(resp. purely discontinuous) local martingale.   $\int_s^{\cdot}\beta^i_rdr$ is bounded and $M^c$ is a continuous local martingale with bounded bracket $\langle M^c\rangle =[M^c]=\int_s^{\cdot}(\sigma\sigma^{\intercal})^i_r dr$
 hence by BDG inequality, $\underset{t\in[s,T]}{\text{sup }}|M^c_t|\in \mathcal{L}^p(\mathbbm{P}^{s,\eta})$. In order to conclude,
 we are therefore left to show that the same holds for $M^d$. We have $M^d_t=\underset{r\leq t}{\sum}\Delta X^i_r - 
\int_s^t\int_{\mathbbm{R}^d}\gamma^i_r(\cdot,y)F(dy)dr$,
where the integral in previous formula
 is bounded, because $\gamma$ is bounded.
 So we need to show that $\left(\underset{r\leq T}{\sum}|\Delta X^i_r|\right)\in\mathcal{L}^p(\mathbbm{P}^{s,\eta})$.
 Observing the definition of $\mathbbm{P}^{s,\eta}$ and $\tilde{X}$ in Definition \ref{D213} it is clear that since $\gamma$ is bounded then the jumps of $X$ under $\mathbbm{P}^{s,\eta}$ are bounded. So finally, we are left to show that the number of jumps of $X^i$ $\underset{r\leq T}{\sum}\mathds{1}_{\Delta X^i_r\neq 0}$ belongs to $\mathcal{L}^p(\mathbbm{P}^{s,\eta})$. This holds since $X$ can jump only if the underlying Poisson measure $p$ jumps (see Definition \ref{D213}) and the number of jumps of $p$ on $[s,T]$ is a Poisson r.v. of parameter $(T-s)F(\mathbbm{R}^d)$ which admits a finite $p$-th moment.

\end{proof}

\begin{notation}\label{DA} 
	We set $\mathcal{D}(A)$ to be the space of real valued functionals $\Phi\in\mathbbm{C}^{1,2}_b(\Lambda_T)$ such that $\Phi,D\Phi,\nabla\Phi,\nabla^2\Phi$ have polynomial growth and such that $\nabla \Phi$ has the horizontal local Lipschitz property. We define the map $A$ on  $\mathcal{D}(A)$ by setting for every $\Phi\in\mathcal{D}(A)$
	\begin{equation}\label{DefA}
	\begin{array}{rl}
	(A\Phi)_t(\omega):=&D\Phi_t(\omega) + \frac{1}{2}Tr((\sigma\sigma^{\intercal})_t(\omega) \nabla ^2 \Phi_t(\omega))  + \beta_t(\omega)\cdot\nabla  \Phi_t(\omega)\\
	&+\int_{\mathbbm{R}^d}(\Phi_t(\omega+\gamma_t(\omega,y)\mathds{1}_{[t,+\infty[})-\Phi_t(\omega)- \gamma_t(\omega,y)\cdot \nabla  \Phi_t(\omega))F(dy).
	\end{array}
	\end{equation}
We also set $M[\Phi]$ as in \eqref{MafPhi} in Notation \ref{N514}. It
defines an AF.
\end{notation}

\begin{remark} \label{RX}
By Lemma \ref{L69}, and expression \eqref{DefA}, the
	coordinates $X^i, 1 \le i  \le d$ of the canonical process belong to $\mathcal{D}(A)$
	and $A(X^i) =\beta^i$.
\end{remark}

 Definition \ref{poly}, Lemma \ref{Moments}, Notation \ref{DA}  
and the fact that $\beta,\sigma,\gamma$ are bounded and $F$ is finite
  yield the following.
\begin{corollary}\label{Cpoly}
\begin{enumerate}\
\item 
for  every $\Phi\in\mathcal{D}(A)$, 
$A\Phi$ and $\Phi -\int_0^{\cdot}(A\Phi)_rdr$ have polynomial growth;
\item every $\Phi$ with polynomial growth verifies that $\underset{r\in[s,T]}{\text{sup }}|\Phi_r|\in \mathcal{L}^p(\mathbbm{P}^{s,\eta})$
   for all finite $p\ge 1 $, $(s,\eta)\in[0,T]\times\Omega$;
\item for all $\Phi\in\mathcal{D}(A)$, $(s,\eta)\in[0,T]\times \Omega$ and finite $p \ge 1$, we have \\
$\underset{t\in[s,T]}{\text{sup }}\left|\Phi_t-\Phi_s(\eta)-\int_s^tA(\Phi)_rdr\right|\in \mathcal{L}^p(\mathbbm{P}^{s,\eta})$;
\item $(\mathcal{D}(A),A)$ verifies Hypothesis \ref{HypDA};
\end{enumerate}
\end{corollary}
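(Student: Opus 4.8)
The plan is to derive all four assertions as bookkeeping consequences of the polynomial growth estimates, of Lemma \ref{Moments}, and of the boundedness of $\beta,\sigma,\gamma$ together with the finiteness of $F$; recall that here $V(t)\equiv t$. Item 1 is the only computational step. By definition of $\mathcal{D}(A)$ the functionals $\Phi,D\Phi,\nabla\Phi,\nabla^2\Phi$ have polynomial growth; since $\sigma\sigma^{\intercal}$ and $\beta$ are bounded, the first three summands in \eqref{DefA}, namely $D\Phi_t(\omega)$, $\frac{1}{2}Tr((\sigma\sigma^{\intercal})_t(\omega)\nabla^2\Phi_t(\omega))$ and $\beta_t(\omega)\cdot\nabla\Phi_t(\omega)$, inherit polynomial growth. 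For the non-local term I would estimate, for fixed $(t,\omega,y)$,
\[
\big|\Phi_t(\omega+\gamma_t(\omega,y)\mathds{1}_{[t,+\infty[})-\Phi_t(\omega)-\gamma_t(\omega,y)\cdot\nabla\Phi_t(\omega)\big|;
\]
using that $\Phi_t$ depends only on the path stopped at $t$ and that $\|\gamma\|_{\infty}<\infty$, the shifted path stopped at $t$ has sup-norm at most $\sup_{r\le t}\|\omega(r)\|+\|\gamma\|_{\infty}$, so by polynomial growth of $\Phi$ and $\nabla\Phi$ each contribution is bounded, uniformly in $y$, by $C(1+\sup_{r\le t}\|\omega(r)\|^{q})$ for suitable $C,q$ (alternatively one may use a second order Taylor bound proportional to $\|\gamma_t(\omega,y)\|^{2}$ and to a sup of $\nabla^{2}\Phi$ along the segment; either bound is uniform in $y$). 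Integrating against the finite measure $F$ gives polynomial growth of $A\Phi$. Finally $\int_0^t|A\Phi_r(\omega)|dr\le T\sup_{r\in[0,t]}|A\Phi_r(\omega)|$ (using $\sup_{r\le s}\|\omega(r)\|\le\sup_{r\le t}\|\omega(r)\|$ for $s\le t$), so $\Phi-\int_0^{\cdot}(A\Phi)_rdr$ has polynomial growth too, and, cadlag paths being bounded on compacts, item 3 of Hypothesis \ref{HypDA} holds.

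For item 2, if $\Phi$ has polynomial growth then $\sup_{r\in[s,T]}|\Phi_r|\le C(1+\sup_{r\le T}\|X_r\|^{p})$, and comparing norms on $\mathbbm{R}^d$, $\sup_{r\le T}\|X_r\|^{p}\le C'\sum_{i=1}^d\sup_{t\in[s,T]}|X^i_t|^{p}$; by Lemma \ref{Moments} each $\sup_{t\in[s,T]}|X^i_t|$ has finite moments of every order under $\mathbbm{P}^{s,\eta}$, whence $\sup_{r\in[s,T]}|\Phi_r|\in\mathcal{L}^{q}(\mathbbm{P}^{s,\eta})$ for every finite $q\ge1$. Item 3 of the corollary then follows from the triangle inequality
\[
\sup_{t\in[s,T]}\Big|\Phi_t-\Phi_s(\eta)-\int_s^tA(\Phi)_rdr\Big|\le\sup_{r\in[s,T]}|\Phi_r|+|\Phi_s(\eta)|+T\sup_{r\in[s,T]}|A(\Phi)_r|
\]
by applying item 2 to $\Phi$ and, thanks to item 1, to $A\Phi$, and by recalling that $\Phi_s(\eta)$ is a deterministic constant.

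For item 4 one verifies the four requirements of Hypothesis \ref{HypDA}. $\mathcal{D}(A)$ is a linear subspace of the $\mathbbm{F}^o$-progressively measurable processes because $\mathbbm{C}^{1,2}_b(\Lambda_T)$, polynomial growth and the horizontal local Lipschitz property are all stable under linear combinations (and it is non-trivial, containing the coordinates $X^i$, see Remark \ref{RX}). The map $A$ is linear by inspection of \eqref{DefA} and maps into the progressively measurable processes: the local terms by measurability of the Dupire derivatives and of $\beta,\sigma$, and the non-local term by a routine Fubini/monotone class argument, using that $\gamma$ is $\mathcal{P}re^o\otimes\mathcal{B}(\mathbbm{R}^d)$-measurable and $\Phi$ is $\mathcal{P}ro^o$-measurable. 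Item 3 of Hypothesis \ref{HypDA} was obtained in the first paragraph, and item 4 of Hypothesis \ref{HypDA}, namely $\mathbbm{E}^{s,\eta}[\int_s^t|A(\Phi)_r|dr]<\infty$ and $\mathbbm{E}^{s,\eta}[|\Phi_t|]<\infty$, follows from items 1 and 2 of the corollary.

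The only step requiring genuine care is the polynomial growth estimate of the non-local part of $A\Phi$ — in particular making sure the bound is uniform in the jump variable $y$ so that integration against the finite measure $F$ is harmless — together with the joint measurability needed to place that same term among the progressively measurable processes; everything else is routine bookkeeping with polynomial growth and Lemma \ref{Moments}.
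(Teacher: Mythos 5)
Your argument is correct and is essentially the paper's own: the corollary is stated there without a written proof, as an immediate consequence of Definition \ref{poly}, Lemma \ref{Moments}, Notation \ref{DA} and the boundedness of $\beta,\sigma,\gamma$ together with the finiteness of $F$, which are exactly the ingredients you assemble (in particular the uniform-in-$y$ polynomial bound on the non-local term, via the observation that the shifted path stopped at $t$ has sup-norm at most $\sup_{r\le t}\|\omega(r)\|+\|\gamma\|_{\infty}$, is the key point and you handle it properly). The only slip is the pathwise inequality $\sup_{r\le T}\|X_r\|^{p}\le C'\sum_{i}\sup_{t\in[s,T]}|X^i_t|^{p}$, which is false for paths that are large on $[0,s)$; instead use that $\mathbbm{P}^{s,\eta}(\omega^s=\eta^s)=1$, so that under $\mathbbm{P}^{s,\eta}$ the supremum over $[0,s]$ is the deterministic constant $\sup_{r\le s}\|\eta(r)\|$ and can be absorbed into the constant, after which item 2 and everything downstream of it goes through unchanged.
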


%
%

\begin{proposition}\label{MPSDEeq}
	Let $(s,\eta)\in[0,T]\times\Omega$. A probability measure
 $\mathbbm{P}$ on $(\Omega,\mathcal{F})$ is a weak solution of the SDE with coefficients $\beta,\sigma,\gamma$ starting in $(s,\eta)$ iff 
it solves the martingale problem associated to  $(\mathcal{D}(A),A)$, 
defined in Notation \ref{DA}, in the sense of
 Definition \ref{MPop}.
\end{proposition}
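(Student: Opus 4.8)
The plan is to reduce everything, via Proposition \ref{SDEeq}, to the $\mathcal{C}^2_b$-martingale characterization of weak solutions (item 3 there), and then match it with Definition \ref{MPop} through the path-dependent It\^o formula. First I would dispose of the filtration discrepancy: every process occurring in either problem is $\mathbbm{F}^o$-adapted and (under a candidate $\mathbbm{P}$) admits a cadlag version, and for such a process being a $(\mathbbm{P},\mathbbm{F}^o)$-martingale on $[s,+\infty[$ is equivalent to being a $(\mathbbm{P},\mathbbm{F})$-martingale on $[s,+\infty[$ — the ``$\Rightarrow$'' from $\mathcal{F}_u=\bigcap_{v>u}\mathcal{F}^o_v$, right-continuity of paths and backward martingale convergence, the ``$\Leftarrow$'' by projecting onto $\mathcal{F}^o_u\subseteq\mathcal{F}_u$. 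Both the martingale problem of Definition \ref{MPop} and item 3 of Proposition \ref{SDEeq} also require $\mathbbm{P}(\omega^s=\eta^s)=1$, so it remains only to compare the two martingale requirements.

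Suppose $\mathbbm{P}$ solves the martingale problem for $(\mathcal{D}(A),A)$. Given $f\in\mathcal{C}^2_b(\mathbbm{R}^d)$, set $\Phi:(t,\omega)\mapsto f(\omega(t\wedge T))$. By Lemma \ref{L69} and Remark \ref{R59}, $\Phi\in\mathbbm{C}^{1,2}_b(\Lambda_T)$ with $D\Phi\equiv0$, $\nabla\Phi=\nabla f(X)\mathds{1}_{[0,T]}$, $\nabla^2\Phi=\nabla^2f(X)\mathds{1}_{[0,T]}$, all bounded hence of polynomial growth, and $\nabla\Phi$ has the horizontal local Lipschitz property because $\Phi$ is time-independent; thus $\Phi\in\mathcal{D}(A)$, and comparing the expression \eqref{DefA} of $A$ with Notation \ref{NotA} gives $A\Phi_r=A_rf$ for $r\in[0,T]$ and $A\Phi_r=0$ for $r>T$. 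Hence the martingale property of $\Phi-\int_0^{\cdot}A\Phi_r\,dr$ on $[s,+\infty[$ is, up to the additive constant $\int_0^s A_rf\,dr$, exactly item 3 of Proposition \ref{SDEeq}, so $\mathbbm{P}$ is a weak solution.

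Conversely, suppose $\mathbbm{P}$ is a weak solution and fix $\Phi\in\mathcal{D}(A)$. By Proposition \ref{SDEeq} item 2 and Definition \ref{Char}, under $\mathbbm{P}$ the canonical process $X$ is, on $[s,T]$, a special semimartingale with canonical decomposition $X=X_s+\int_s^{\cdot}\beta_r\,dr+M$, $\langle X^c\rangle=\int_s^{\cdot}(\sigma\sigma^{\intercal})_r\,dr$, and jump measure $\mu^X$ whose predictable compensator $\nu$ is the image of $dr\otimes F(dy)$ under $(r,y)\mapsto(r,\gamma_r(\cdot,y))$ restricted to $\{\gamma_r\neq0\}$. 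Since $\Phi\in\mathbbm{C}^{1,2}_b(\Lambda_T)$ and $\nabla\Phi$ has the horizontal local Lipschitz property, Theorem \ref{ItoFunct} applies on $[s,T]$. The idea is then to split $\int_s^{\cdot}\nabla\Phi_r\,dX_r$ along the decomposition of $X$, and to rewrite the jump sum $\sum_{r\in]s,\cdot]:\Delta X_r\neq0}(\Phi_r(\omega)-\Phi_r(\omega^{r-})-\nabla\Phi_r(\omega^{r-})\cdot\Delta X_r)$, using that $\Phi_r$ depends only on the path up to time $r$, as $\int g(r,\cdot,x)\,\mu^X(dr,dx)$ with $g(r,\omega,x):=\Phi_r(\omega^{r-}+x\mathds{1}_{[r,+\infty[})-\Phi_r(\omega^{r-})-\nabla\Phi_r(\omega^{r-})\cdot x$. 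Compensating $\mu^X$ by $\nu$, and using that $\omega^{r-}$ may be replaced by $\omega$ and $\gamma_r(\omega^{r-},y)$ by $\gamma_r(\omega,y)$ for $dr$-almost every $r$, the predictable finite-variation part of $\Phi$ on $[s,\cdot]$ reduces, by the very definition \eqref{DefA} of $A$, to $\int_s^{\cdot}A\Phi_r\,dr$, whereas the remaining terms ($\int_s^{\cdot}\nabla\Phi_r\,dM_r$ together with the compensated jump integral) form a local martingale. Therefore $\Phi-\Phi_s(\eta)-\int_s^{\cdot}A\Phi_r\,dr$, hence also $\Phi-\int_0^{\cdot}A\Phi_r\,dr$ (differing by the constant $\int_0^s A\Phi_r\,dr$ on $\{\omega^s=\eta^s\}$), is a local martingale on $[s,+\infty[$, being constant after $T$ since $\beta,\sigma,\gamma$ and $\Phi,A\Phi$ are. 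Finally, $\Phi$ and $A\Phi$ have polynomial growth (Corollary \ref{Cpoly}) and $\sup_{t\in[s,T]}\|X_t\|$ has finite moments of every order under $\mathbbm{P}$ (the proof of Lemma \ref{Moments} uses only the characteristics, hence is valid for any weak solution), so $\sup_{t\in[s,T]}|\Phi_t-\Phi_s(\eta)-\int_s^tA\Phi_r\,dr|\in\mathcal{L}^1(\mathbbm{P})$; a local martingale dominated by an integrable random variable is a uniformly integrable martingale, which upgrades the previous local martingale to a genuine $(\mathbbm{P},\mathbbm{F})$-, hence $(\mathbbm{P},\mathbbm{F}^o)$-, martingale on $[s,+\infty[$. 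Together with $\mathbbm{P}(\omega^s=\eta^s)=1$, this is precisely the martingale problem of Definition \ref{MPop}.

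The main obstacle is this converse direction, namely reconciling the output of the path-dependent It\^o formula with the operator $A$ of \eqref{DefA}: the careful bookkeeping of $\omega^{r-}$ versus $\omega$ (and of $\gamma_r(\omega^{r-},\cdot)$ versus $\gamma_r(\omega,\cdot)$) inside $\Phi$, the passage from the jump measure $\mu^X$ to its compensator $\nu$, and the upgrade from a local to a true martingale via the polynomial-growth moment bounds. The easy direction and the filtration equivalence are routine.
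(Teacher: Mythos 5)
Your proposal is correct and follows essentially the same route as the paper: the direction ``martingale problem $\Rightarrow$ weak solution'' via Lemma \ref{L69}/Remark \ref{R59} and item 3 of Proposition \ref{SDEeq}, and the converse via the characteristics from Proposition \ref{SDEeq} item 2, the path-dependent It\^o formula (Theorem \ref{ItoFunct}), the replacement of $\omega^{r-}$ by $\omega$ for $dr$-a.e.\ $r$ through countability of jump times, and the upgrade from local to true martingale via the polynomial-growth moment bounds. Your explicit reconciliation of the filtrations $\mathbbm{F}^o$ versus $\mathbbm{F}$ and the remark that the moment estimates hold for any weak solution are minor technical supplements to what the paper does implicitly.
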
 
\begin{proof}
We fix $(s,\eta)$. Let $\mathbbm{P}$ be a weak solution of the SDE with
coefficients $\beta,\sigma,\gamma$ starting in $(s,\eta)$.
We show that it fulfills the martingale problem in the sense of Definition 
\ref{MPop}. 
By Proposition \ref{SDEeq},  we immediately see that 
$\mathbbm{P}(\omega^s=\eta^s)=1$ which constitutes item 1.
of Definition \ref{MPop}.
 By Proposition \ref{SDEeq} it follows 
 that, under  $\mathbbm{P}$, $(X_t)_{t\in[s,+\infty[}$ is a  semi-martingale 
with characteristics $\int_s^{\cdot}\beta_rdr$, $\int_s^{\cdot}(\sigma\sigma^{\intercal})_rdr$ and $\nu:(\omega,C)\mapsto\int_s^{+\infty}\int_E\ \mathds{1}_C(r,\gamma_r(\omega,y))\mathds{1}_{\{\gamma_r(\omega,y)\neq 0\}}F(dy)dr$. 

Now let $\Phi\in\mathcal{D}(A)$. Since for every $\omega$, the set of jump times $\{t:\Delta\omega(t)\neq 0\}$ is countable hence Lebesgue negligible, then
$ \Phi_r(\omega^r) =  \Phi_r(\omega^{r-}), \ dr \ {\rm a.e.},$
and so
 $$\int_s^{\cdot}\int_{\mathbbm{R}^d}(\Phi_r(\omega+\gamma_r(\omega,y)\mathds{1}_{[r,+\infty[})-\Phi_r(\omega)- \gamma_r(\omega,y)\cdot \nabla  \Phi_r(\omega))F(dy)dr,$$ is indistinguishable from $$\int_s^{\cdot}\int_{\mathbbm{R}^d}(\Phi_r(\omega^{r-}+\gamma_r(\omega,y)\mathds{1}_{[r,+\infty[})-\Phi_t(\omega^{r-})- \gamma_t(\omega,y)\cdot \nabla  \Phi_t(\omega^{r-}))F(dy)dr,$$
 which is the compensator 
of 
$$\underset{r\in]s,\cdot]:\Delta X_r\neq 0}{\sum}\Phi_r(\omega)-\Phi_r(\omega^{r-})-  \Delta X_r\cdot\nabla\Phi_r(\omega^{r-}),$$
i.e. their difference is a local martingale.
 By Theorem \ref{ItoFunct}, we therefore have that,
$\Phi-\int_s^{\cdot}A\Phi_rdr $ is a local martingale, and by
item 3. of Corollary \ref{Cpoly} it is a martingale.
 Since this holds for any $\Phi\in\mathcal{D}(A)$, $\mathbbm{P}$ also verifies
 item 2. of Definition \ref{MPop}. 
This concludes the proof of the direct implication.

As far as the converse implication is concerned,
 let us assume that $\mathbbm{P}$ satisfies both items of 
Definition \ref{MPop}.
By Lemma \ref{L69} and Remark \ref{R59}, we have the following. For any $h\in\mathcal{C}_b^{2}(\mathbbm{R}^d)$, the functional $H:(t,\omega)\mapsto h(\omega(t\wedge T))$ belongs to $\mathcal{D}(A)$ and, for any $(t,\omega)\in\Lambda$,  $AH(t,\omega)=A_th(\omega)$, see Notation \ref{NotA}. Definition \ref{MPop} therefore implies that $\mathbbm{P}$ verifies item 3. in Proposition \ref{SDEeq}, hence that it is a weak solution of the SDE.
\end{proof}
	
\begin{corollary} \label{C516}
Let $(\mathbbm{P}^{s,\eta})_{(s,\eta)\in\mathbbm{R}_+\times \Omega}$ be the family
introduced at the beginning of Section \ref{S3b}. 
We suppose the validity  Hypothesis \ref{HypWellPosed}.
\begin{enumerate}
 \item
$(\mathbbm{P}^{s,\eta})_{(s,\eta)\in\mathbbm{R}_+\times \Omega}$ solves the martingale problem associated to $(\mathcal{D}(A),A)$, 
see Definition \ref{MPop}.
\item
 $(\mathcal{D}(A),A)$ is a weak generator of $(P_s)_{s\in\mathbbm{R}_+},$ which is the unique path-dependent system of projectors for which this holds.
\item  For all $\Phi\in\mathcal{D}(A)$, the AF $M[\Phi]$  is  
a square integrable MAF.
\end{enumerate}
\end{corollary}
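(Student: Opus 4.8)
The plan is to assemble the three assertions from results already proved, the genuine analytic content (the functional It\^o computation) having been absorbed into Proposition \ref{MPSDEeq}; what remains is essentially bookkeeping, once one records that the ambient hypotheses are in force. Indeed, by Proposition \ref{UniqueMPcontImpliesProg} the family $(\mathbbm{P}^{s,\eta})_{(s,\eta)\in\mathbbm{R}_+\times\Omega}$ introduced at the beginning of Subsection \ref{S3b} is a progressive path-dependent canonical class verifying Hypothesis \ref{HypClass}, and by item 4 of Corollary \ref{Cpoly} the couple $(\mathcal{D}(A),A)$ of Notation \ref{DA} verifies Hypothesis \ref{HypDA} (with $V(t)\equiv t$); so the correspondence results of Subsection \ref{S1a} all apply.

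For item 1, fix $(s,\eta)\in[0,T]\times\Omega$. By its very construction, $\mathbbm{P}^{s,\eta}$ is \emph{the} weak solution of the SDE with coefficients $\beta,\sigma,\gamma$ starting in $(s,\eta)$, which exists and is unique by Hypothesis \ref{HypWellPosed}. Proposition \ref{MPSDEeq} states precisely that such a probability solves the martingale problem associated to $(\mathcal{D}(A),A)$ in the sense of Definition \ref{MPop}; since this holds for every $(s,\eta)$, item 1 follows.

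For item 2, I would invoke Proposition \ref{MPopWellPosed}: since $(\mathbbm{P}^{s,\eta})_{(s,\eta)\in\mathbbm{R}_+\times\Omega}$ solves the martingale problem associated to $(\mathcal{D}(A),A)$ by item 1, that proposition gives that $(\mathcal{D}(A),A)$ is a weak generator of the associated path-dependent system of projectors, which here is $(P_s)_{s\in\mathbbm{R}_+}$. Moreover Hypothesis \ref{HypWellPosed} ensures that for each $(s,\eta)$ the weak solution, hence (again by Proposition \ref{MPSDEeq}) the solution of the martingale problem, is unique; thus the martingale problem associated to $(\mathcal{D}(A),A)$ is well-posed, and the ``in particular'' clause of Proposition \ref{MPopWellPosed} yields that $(P_s)_{s\in\mathbbm{R}_+}$ is the \emph{unique} path-dependent system of projectors for which $(\mathcal{D}(A),A)$ is a weak generator.

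For item 3, fix $\Phi\in\mathcal{D}(A)$. By Notation \ref{DA} together with Proposition \ref{CoroMaf} (applicable thanks to item 2), the random field $M[\Phi]$ of \eqref{MafPhi} is a MAF whose cadlag version under each $\mathbbm{P}^{s,\eta}$ is a $(\mathbbm{P}^{s,\eta},\mathbbm{F}^{s,\eta})$-martingale. To obtain square integrability I would apply Lemma \ref{bracketGammater}, checking its three hypotheses: (a) $\Phi\in\mathcal{D}(A)$ is given; (c) $A\Phi$ has polynomial growth by item 1 of Corollary \ref{Cpoly}, and any functional of polynomial growth lies in $\mathcal{L}^2_{uni}$ by item 2 of Corollary \ref{Cpoly} with $p=2$ and the definition of $\mathcal{L}^2_{uni}$ in Notation \ref{L2uni}; (b) $\Phi^2\in\mathcal{D}(A)$, which follows because $\mathbbm{C}^{1,2}_b(\Lambda_T)$ is stable under pointwise products, polynomial growth is stable under products (so $\Phi^2$, $D(\Phi^2)=2\Phi D\Phi$, $\nabla(\Phi^2)=2\Phi\nabla\Phi$ and $\nabla^2(\Phi^2)=2\Phi\nabla^2\Phi+2\nabla\Phi(\nabla\Phi)^{\intercal}$ all have polynomial growth), and $\nabla(\Phi^2)=2\Phi\nabla\Phi$ still enjoys the horizontal local Lipschitz property, being locally a product of a locally bounded functional and a horizontally locally Lipschitz one (using that $D\Phi$ and $\nabla\Phi$ are boundedness preserving). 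Given (a)--(c), Lemma \ref{bracketGammater} concludes that $M[\Phi]$ is a square integrable MAF. The only point requiring a little care — and hence the ``main obstacle'', such as it is — is the verification in (b) that $\mathcal{D}(A)$ is stable under squaring, specifically the horizontal local Lipschitz estimate for $2\Phi\nabla\Phi$; everything else is a direct transcription of the cited statements.
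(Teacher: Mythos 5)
Your items 1 and 2 are correct and follow the paper's own route: item 1 is Proposition \ref{MPSDEeq} applied to each $\mathbbm{P}^{s,\eta}$, and item 2 is the second statement of Proposition \ref{MPopWellPosed}; your extra observation that uniqueness of weak solutions (Hypothesis \ref{HypWellPosed}) transfers, via Proposition \ref{MPSDEeq}, into well-posedness of the martingale problem is exactly what legitimates the ``unique system of projectors'' clause, so no objection there.

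For item 3 you take a genuinely different and, as written, incomplete route. The paper's argument is direct: by Proposition \ref{CoroMaf}, $M[\Phi]$ is a MAF whose cadlag version under $\mathbbm{P}^{s,\eta}$ is a version of $\Phi-\Phi_s(\eta)-\int_s^{\cdot}A(\Phi)_rdr$ on $[s,T]$; by items 1--3 of Corollary \ref{Cpoly} (polynomial growth of $\Phi$ and $A\Phi$ plus Lemma \ref{Moments}) this process has all moments, in particular $M[\Phi]^{s,\eta}_T\in\mathcal{L}^2(\mathbbm{P}^{s,\eta})$, so the martingale is square integrable -- no product structure is needed. You instead invoke Lemma \ref{bracketGammater}, whose hypotheses include $\Phi^2\in\mathcal{D}(A)$, and that is where the gap sits: the horizontal local Lipschitz property of $\nabla(\Phi^2)=2\Phi\nabla\Phi$ does not follow from the properties you cite. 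Splitting a horizontal increment as $\Phi_{t_2}(\eta^{t_1})\bigl(\nabla\Phi_{t_2}(\eta^{t_1})-\nabla\Phi_{t_1}(\eta^{t_1})\bigr)+\bigl(\Phi_{t_2}(\eta^{t_1})-\Phi_{t_1}(\eta^{t_1})\bigr)\nabla\Phi_{t_1}(\eta^{t_1})$, the second term needs a horizontal Lipschitz estimate on $\Phi$ itself, which is \emph{not} part of the definition of $\mathcal{D}(A)$ in Notation \ref{DA} (only $\nabla\Phi$ is assumed horizontally locally Lipschitz). It can be recovered -- $t\mapsto\Phi_t(\eta^{t_1})$ is right-differentiable with right derivative $D\Phi$, locally bounded by boundedness preservation, and is continuous by left-continuity of $\Phi$ together with right-differentiability, so a mean-value argument yields a local Lipschitz bound -- but you do not give this argument; you only flag it as the ``main obstacle''. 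Since the short path through Corollary \ref{Cpoly} is available and is the paper's, the detour through stability of $\mathcal{D}(A)$ under squaring is both unproved as written and unnecessary.
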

\begin{proof}
The first statement follows by Proposition \ref{MPSDEeq}, 
the second one by the second statement of Proposition \ref{MPopWellPosed}.
The third statement holds because of Proposition \ref{CoroMaf}. We are indeed in the framework of Section \ref{S2b}, see
Notation \ref{N514}.
\end{proof}

\begin{proposition} \label{P621} 
	Let $\Phi\in \mathcal{D}(A)$ be such that for all $i\leq d$,
 $\Phi X^i\in\mathcal{D}(A)$. Then for all $(t,\omega)\in\mathbbm{R}_+\times\Omega$, 
	\begin{equation}
		\Gamma(X,\Phi)_t(\omega)=(\sigma\sigma^{\intercal}\nabla \Phi)_t(\omega) + \int_{\mathbbm{R}^d}\gamma_t(\omega,y)(\Phi_t(\omega+\gamma_t(\omega,y)\mathds{1}_{[t,+\infty[})-\Phi_t(\omega))F(dy).
	\end{equation}
	
\end{proposition}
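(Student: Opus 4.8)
The plan is to prove the identity coordinate by coordinate. Fix $i\in\{1,\dots,d\}$. By Notation \ref{NotGamma} and Remark \ref{RX} (which gives $A(X^i)=\beta^i$) we have $\Gamma(X^i,\Phi)=A(X^i\Phi)-X^i A(\Phi)-\beta^i\Phi$. Since $X^i\Phi\in\mathcal{D}(A)$ by hypothesis, the only real task is to compute $A(X^i\Phi)$ through formula \eqref{DefA}, which in turn requires the Dupire derivatives of the product functional $X^i\Phi$.

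First I would record the elementary Leibniz rules for Dupire derivatives (immediate from the definitions, by differentiating a product of two real functions of the shift parameter): combined with Remark \ref{R59}, which states $DX^i\equiv 0$, $\nabla X^i\equiv e_i\mathds{1}_{[0,T]}$, $\nabla^2 X^i\equiv 0$, these give, on $[0,T]$,
\[
D(X^i\Phi)=X^i D\Phi,\qquad \nabla_j(X^i\Phi)=\delta_{ij}\Phi+X^i\nabla_j\Phi,\qquad \nabla^2_{jk}(X^i\Phi)=\delta_{ij}\nabla_k\Phi+\delta_{ik}\nabla_j\Phi+X^i\nabla^2_{jk}\Phi.
\]
For the nonlocal term I would use that the functional $X^i$ evaluated at the shifted path $\omega+\gamma_t(\omega,y)\mathds{1}_{[t,+\infty[}$ equals $\omega_i(t)+\gamma^i_t(\omega,y)$ at time $t$; hence, writing $\Phi^{\gamma}:=\Phi_t(\omega+\gamma_t(\omega,y)\mathds{1}_{[t,+\infty[})$ for brevity, a short algebraic rearrangement shows the jump integrand of $X^i\Phi$ equals $\omega_i(t)\big(\Phi^{\gamma}-\Phi_t(\omega)-\gamma_t(\omega,y)\cdot\nabla\Phi_t(\omega)\big)+\gamma^i_t(\omega,y)\big(\Phi^{\gamma}-\Phi_t(\omega)\big)$.

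Next I would substitute all of this into \eqref{DefA} and collect the terms. Using the symmetry of $\sigma\sigma^{\intercal}$, the two cross terms arising in $Tr(\sigma\sigma^{\intercal}\nabla^2(X^i\Phi))$ add up to $2(\sigma\sigma^{\intercal}\nabla\Phi)_i$; the drift term contributes $\beta^i\Phi+X^i\,\beta\cdot\nabla\Phi$; the nonlocal term splits as just displayed. Grouping, every summand carrying the factor $X^i=\omega_i(t)$ reassembles exactly into $X^i(A\Phi)$, and what remains is $(\sigma\sigma^{\intercal}\nabla\Phi)_i+\beta^i\Phi+\int_{\mathbbm{R}^d}\gamma^i_t(\omega,y)(\Phi^{\gamma}-\Phi_t(\omega))F(dy)$. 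Thus $A(X^i\Phi)=X^i(A\Phi)+(\sigma\sigma^{\intercal}\nabla\Phi)_i+\beta^i\Phi+\int\gamma^i(\Phi^{\gamma}-\Phi)F(dy)$, and inserting this into $\Gamma(X^i,\Phi)=A(X^i\Phi)-X^i A(\Phi)-\beta^i\Phi$ the first and last terms cancel, leaving the $i$-th component of the claimed identity; assembling over $i$ as a vector completes the proof.

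The main obstacle is purely the bookkeeping in the final collection step: one must check that the $X^i$-proportional contributions coming from the horizontal derivative, the second-order term, the drift and the integral all recombine into $X^i A(\Phi)$ with nothing left over. No integrability issue arises, since $\Phi,\nabla\Phi,\nabla^2\Phi$ have polynomial growth while $\beta,\sigma,\gamma$ are bounded and $F$ is finite, so all the integrals against $F$ are well defined and finite for every $(t,\omega)$.
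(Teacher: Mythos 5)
Your proposal is correct and follows essentially the same route as the paper: both expand $A(\Phi X^i)$ via the product rules for the horizontal and vertical Dupire derivatives, use $DX^i\equiv 0$, $\nabla X^i\equiv e_i$, $\nabla^2X^i\equiv 0$ from Remark \ref{R59}, split the nonlocal integrand in the same way, and cancel the $X^i A\Phi$ and $\beta^i\Phi$ terms. The only difference is organizational (you compute $A(X^i\Phi)$ first and then subtract, whereas the paper simplifies the difference $A(\Phi X^i)-\Phi AX^i-X^iA\Phi$ directly), which is immaterial.
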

\begin{proof}
	We fix $\Phi$ and $1\leq i\leq d$. We recall that the usual product rules apply to both the horizontal and the vertical derivative so that
	
	\begin{equation} 
	\begin{array}{ll} 
	&A(\Phi X^i)_t-\Phi_t AX^i_t - X^i_tA\Phi_t\\
	=&D(\Phi X^i)_t -\Phi_t D X^i_t -X^i_tD\Phi_t\\
	&+ \frac{1}{2}Tr((\sigma\sigma^{\intercal} \nabla^2(\Phi X^i))_t)-\frac{1}{2}\Phi_tTr((\sigma\sigma^{\intercal} \nabla^2X^i)_t) -\frac{1}{2}X^i_tTr((\sigma\sigma^{\intercal} \nabla^2\Phi)_t)\\
	& + \beta_t\cdot \nabla(\Phi X^i)_t-\Phi_t \beta_t\cdot \nabla X^i_t-X^i_t \beta_t\cdot \nabla\Phi_t\\
	&+\int_{\mathbbm{R}^d}(\Phi_t(\cdot+\gamma_t(\cdot,y)\mathds{1}_{[t,+\infty[})(X^i_t+\gamma^i_t(\cdot,y))-\Phi_tX^i_t- \gamma_t(\cdot,y)\cdot \nabla (X^i\Phi)_t)F(dy)\\
	&-\Phi_t\int_{\mathbbm{R}^d}(\gamma^i_t(\cdot,y)- \gamma_t(\cdot,y)\cdot \nabla  \Phi_t)F(dy)\\	
	&-X^i_t\int_{\mathbbm{R}^d}(\Phi_t(\cdot+\gamma_t(\cdot,y)\mathds{1}_{[t,+\infty[})-\Phi_t- \gamma_t(\cdot,y)\cdot \nabla  \Phi_t)F(dy)\\
	=&\frac{1}{2}Tr\left(\sigma\sigma^{\intercal}_t( \nabla^2(\Phi X^i)_t-\Phi_t\nabla^2X^i_t-X^i_t\nabla^2\Phi_t)\right)\\
	&+\int_{\mathbbm{R}^d}(\Phi_t(\cdot+\gamma_t(\cdot,y)\mathds{1}_{[t,+\infty[})(X^i_t+\gamma^i_t(\cdot,y))-\Phi_tX^i_t\\
	&-X^i_t(\Phi_t(\cdot+\gamma_t(\cdot,y)\mathds{1}_{[t,+\infty[})-\Phi_t)-\Phi_t\gamma^i_t(\cdot,y))F(dy)\\
	=& \frac{1}{2}\underset{j,k}{\sum}(\sigma\sigma^{\intercal})_t^{j,k}(\nabla^2_{j,k}(\Phi X^i)_t-\Phi_t\nabla^2_{j,k}X^i_t-X^i_t\nabla^2_{j,k}\Phi_t)\\
	&+\int_{\mathbbm{R}^d}\gamma^i_t(\cdot,y)(\Phi_t(\cdot+\gamma_t(\cdot,y)\mathds{1}_{[t,+\infty[})-\Phi_t)F(dy)\\
	=&\frac{1}{2}\underset{j,k}{\sum}(\sigma\sigma^{\intercal})_t^{j,k}(\nabla_j\Phi_t\nabla_kX^i_t+\nabla^k\Phi_t\nabla^jX^i_t)\\
	&+\int_{\mathbbm{R}^d}\gamma^i_t(\cdot,y)(\Phi_t(\cdot+\gamma_t(\cdot,y)\mathds{1}_{[t,+\infty[})-\Phi_t)F(dy)\\
	=&\frac{1}{2}\left(\underset{j}{\sum}(\sigma\sigma^{\intercal})_t^{j,i}\nabla_j\Phi_t+\underset{k}{\sum}(\sigma\sigma^{\intercal})_t^{i,k}\nabla_k\Phi_t\right)\\
	&+\int_{\mathbbm{R}^d}\gamma_t^i(\cdot,y)(\Phi_t(\cdot+\gamma_t(\cdot,y)\mathds{1}_{[t,+\infty[})-\Phi_t)F(dy)\\
	=&(\sigma\sigma^{\intercal}\nabla\Phi)_t^i+\int_{\mathbbm{R}^d}\gamma^i_t(\cdot,y)(\Phi_t(\cdot+\gamma_t(\cdot,y)\mathds{1}_{[t,+\infty[})-\Phi_t)F(dy),
	\end{array}
	\end{equation}
	where by Lemma \ref{L69}, the fifth equality holds since $\nabla_jX^i$ is constantly equal to $1$ if $j=i$ and $0$ otherwise.
\end{proof}

\begin{proposition}\label{PropN}
	$X$ verifies Hypothesis \ref{HypN2}.
\end{proposition}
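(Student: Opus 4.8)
The plan is to verify the three items of Hypothesis \ref{HypN2} separately for each coordinate $\Psi^i:=X^i$, $1\le i\le d$, the essential tools being Lemma \ref{L69}, Remark \ref{R59}, Corollary \ref{Cpoly} and Proposition \ref{P621}.

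For item 1., recall from Remark \ref{RX} that $X^i\in\mathcal{D}(A)$ and $A(X^i)=\beta^i$. Since $X^i$ has polynomial (in fact linear) growth in the sense of Definition \ref{poly} — indeed $|X^i_t(\omega)|\le 1+\sup_{r\le t}\|\omega(r)\|$ — item 2. of Corollary \ref{Cpoly} yields $\sup_{r\in[s,T]}|X^i_r|\in\mathcal{L}^2(\mathbbm{P}^{s,\eta})$ for every $(s,\eta)$; as $V(t)\equiv t$ this gives $\mathbbm{E}^{s,\eta}\big[\int_s^T|X^i_r|^2dr\big]\le (T-s)\,\mathbbm{E}^{s,\eta}\big[\sup_{r\in[s,T]}|X^i_r|^2\big]<\infty$, so $X^i\in\mathcal{L}^2_{uni}$. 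Moreover $A(X^i)=\beta^i$ is bounded, hence it also belongs to $\mathcal{L}^2_{uni}$.

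For item 2. — and simultaneously in order to legitimate the use of Proposition \ref{P621} below — I would first record that $X^iX^j\in\mathcal{D}(A)$ for every $i,j\le d$. Applying Lemma \ref{L69} to $h:x\longmapsto x_ix_j$, which lies in $\mathcal{C}^{1,2}([0,T]\times\mathbbm{R}^d)$ and does not depend on $t$, one obtains that the associated functional $X^iX^j$ belongs to $\mathbbm{C}^{1,2}_b(\Lambda_T)$, with $D(X^iX^j)\equiv 0$, $\nabla(X^iX^j):(t,\omega)\mapsto (X^i_te_j+X^j_te_i)\mathds{1}_{[0,T]}(t)$ and $\nabla^2(X^iX^j)$ constant on $[0,T]$; all these derivatives have polynomial growth, and by Remark \ref{R59} item 1. $\nabla(X^iX^j)$ has the horizontal local Lipschitz property. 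Hence $X^iX^j\in\mathcal{D}(A)$, and in particular (taking $j=i$) $(X^i)^2\in\mathcal{D}(A)$, which is item 2.

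For item 3., since $X^iX^j\in\mathcal{D}(A)$ for all $j\le d$, Proposition \ref{P621} applies with $\Phi:=X^i$. Using $\nabla X^i\equiv e_i\mathds{1}_{[0,T]}$ (Remark \ref{R59}) together with the elementary identity $X^i_t(\omega+\gamma_t(\omega,y)\mathds{1}_{[t,+\infty[})-X^i_t(\omega)=\gamma^i_t(\omega,y)$, extracting the $i$-th coordinate in the formula of Proposition \ref{P621} gives
\begin{equation*}
\Gamma(X^i)_t(\omega)=\Gamma(X^i,X^i)_t(\omega)=(\sigma\sigma^{\intercal})^{i,i}_t(\omega)\mathds{1}_{[0,T]}(t)+\int_{\mathbbm{R}^d}\big(\gamma^i_t(\omega,y)\big)^2F(dy).
\end{equation*}
Since $\sigma$ and $\gamma$ are bounded and $F$ is a finite measure, the right-hand side is bounded, establishing item 3. and completing the proof. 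There is no genuine obstacle in this argument; the only mildly delicate point is checking the $\mathbbm{C}^{1,2}_b(\Lambda_T)$-regularity and the horizontal local Lipschitz property of the products $X^iX^j$, but this is precisely what Lemma \ref{L69} and Remark \ref{R59} are designed to deliver, since $x\mapsto x_ix_j$ is smooth and time-independent.
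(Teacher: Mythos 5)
Your proof is correct and follows essentially the same route as the paper: Remark \ref{RX} plus moment/polynomial-growth bounds for item 1, Lemma \ref{L69} and Remark \ref{R59} for $(X^i)^2\in\mathcal{D}(A)$ in item 2, and Proposition \ref{P621} for the explicit bounded expression of $\Gamma(X^i)$ in item 3. Your only addition is the explicit check that all products $X^iX^j$ lie in $\mathcal{D}(A)$ before invoking Proposition \ref{P621}, a hypothesis the paper leaves implicit, which makes your write-up slightly more careful but not a different argument.
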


\begin{proof}
	We fix $i\leq d$. By Remarks \ref{RX}, $X^i\in\mathcal{D}(A)$ with $A(X^i)=\beta^i$.  A consequence of Lemma \ref{Moments} and of the fact that $\beta$ is bounded is that $X^i$ verifies item 1. of Hypothesis \ref{HypN2}. By Remark \ref{R59} and since $(X^i)^2$ clearly has polynomial growth we have $(X^i)^2\in\mathcal{D}(A)$ which is item 2. of Hypothesis \ref{HypN2}. Finally by Proposition \ref{P621}, we have $\Gamma(X^i)_t(\omega)=(\sigma\sigma^{\intercal})^{i,i}_t(\omega) + 
	\int_{\mathbbm{R}^d}(\gamma^i)^2_t(\omega,y)F(dy)$ which is bounded being the coefficients  $\sigma,\gamma, F$ bounded; so item 3. of Hypothesis \ref{HypN2} is verified.
\end{proof}
From now on we fix $\Psi^i:=X^i$ for all $i$ and the corresponding driving MAF $M[X]$  and we will apply the results of Subsection \ref{S2b} to this specific setup.

We now fix $\xi,f$ verifying  Hypothesis \ref{HypBSDE}. 
We will be interested in the following path-dependent non linear IPDE with terminal condition, denoted by $IPDE(f,\xi)$:
\begin{equation}\label{PDPDE}
\left\{
\begin{array}{l}
	A(\Phi) + f(\cdot,\cdot,\Phi,\Gamma(\Phi,X))=0\text{ on }[0,T]\times\Omega\\
	\Phi_T=\xi\text{ on }\Omega,
\end{array}\right.
\end{equation}
where the explicit expression of $\Gamma(\Phi,X)$ is given by Proposition \ref{P621}.

\begin{remark}
	When $\gamma\equiv 0$, the equation \eqref{PDPDE} is given by 
	\begin{equation}
	\left\{
	\begin{array}{l}
	D\Phi + \frac{1}{2}Tr(\sigma\sigma^{\intercal} \nabla^2\Phi)  + \beta\nabla  \Phi + f(\cdot,\cdot,\Phi,\sigma\sigma^{\intercal} \nabla \Phi )=0\text{ on }[0,T]\times\Omega\\
	\Phi_T=\xi\text{ on }\Omega,
	\end{array}\right.
	\end{equation}
	and it is a path-dependent PDE.
\end{remark}

To the path-dependent IPDE \eqref{PDPDE}, we will associate a family of BSDEs driven by a cadlag martingale, indexed by $(s,\eta)\in[0,T]\times\Omega$.
\begin{notation}
$BSDE^{s,\eta}(f,\xi)$ will denote equation
	\begin{equation}\label{BSDESDE}
	Y^{s,\eta}=\xi+\int_{\cdot}^{T}f\left(r,\cdot,Y^{s,\eta}_r,\frac{d\langle M^{s,\eta},M^{s,\eta}[X]\rangle_r}{dr}\right)dr-(M^{s,\eta}_T-M^{s,\eta}_{\cdot}),
	\end{equation}
	in the stochastic basis $(\Omega,\mathcal{F}^{s,\eta},\mathbbm{F}^{s,\eta},\mathbbm{P}^{s,\eta})$.
\end{notation}

By Proposition \ref{PropN}, $\Psi:= X$, where
$X$ is the  canonical process,
verifies Hypothesis \ref{HypN2};
by item 1. of Remark \ref{RN2}, 
  $M[X]$ satisfies Hypothesis \ref{HypN}.
 Now $\xi,f$ verify Hypothesis \ref{HypBSDE}; so by Theorem \ref{T45},
 for every  $(s,\eta)\in[0,T]\times\Omega$, $BSDE^{s,\eta}(f,\xi)$ admits a unique solution $(Y^{s,\eta},M^{s,\eta})$ in $\mathcal{L}^2(dt\otimes d\mathbbm{P}^{s,\eta})\times\mathcal{H}^2_0(\mathbbm{P}^{s,\eta})$.

\begin{definition}\label{DefClasical}
	Let $\Phi\in\mathcal{D}(A)$ such that $\Phi X^i\in\mathcal{D}(A)$ for all $i\leq d$. We will say that $\Phi$ is a \textbf{classical solution} of $IPDE(f,\xi)$ if it verifies \eqref{PDPDE}.
\end{definition}

We can now formulate the main result of this paper.
\begin{theorem}\label{MainTheorem}
Assume that Hypothesis \ref{HypWellPosed} holds, that $\xi,f(\cdot,\cdot,0,0)$ are Borel with polynomial growth and that $f$ is Lipschitz in $y,z$ uniformly in $t,\omega$. 
\begin{enumerate}
\item The identification problem $IP(f,\xi)$ (see Definition \ref{Abmildsoluv})
 admits a unique solution $(Y,Z)\in\mathcal{L}^2_{uni}\times (L^2_{uni})^d$.
\item 
 $IPDE(f,\xi)$ admits a unique decoupled mild solution $Y$ in the sense that 
whenever $Y$ and $\bar Y$ are two decoupled mild solutions then 
$Y$  and $\bar Y$ are identical. 
\item 
If for every $(s,\eta)$,
  $(Y^{s,\eta},M^{s,\eta})$ is the solution of $BSDE^{s,\eta}(f,\xi)$, i.e. 
\eqref{BSDESDE}, 
 then the decoupled mild solution
$Y$ is given by $(s,\eta) \mapsto Y_s^{s,\eta}$. Moreover,  
 for every $(s,\eta)$, on $[s,T]$, 
$Y^{s,\eta}$ is a $ \mathbbm{P}^{s,\eta}$ version of $Y$ and
$Z_t = \frac{d\langle  M^{s,\eta},M[X]^{s,\eta}\rangle_t}{dt},$ $dt \otimes\mathbbm{P}^{s,\eta}$ a.e.
\end{enumerate}
\end{theorem}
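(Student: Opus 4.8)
The strategy is to reduce the three assertions to the abstract Theorem \ref{AbstractTheorem}, applied in the concrete setting of Subsection \ref{S3b} with $\Psi^i := X^i$ for $1 \le i \le d$ and with $V(t) \equiv t$. First I would check that we are genuinely inside the framework of Subsection \ref{S2b}: by item 4 of Corollary \ref{Cpoly}, the pair $(\mathcal{D}(A),A)$ of Notation \ref{DA} satisfies Hypothesis \ref{HypDA}, and by item 2 of Corollary \ref{C516} it is a weak generator of the path-dependent system of projectors $(P_s)_{s\in\mathbbm{R}_+}$, so that $(\mathbbm{P}^{s,\eta})_{(s,\eta)\in[0,T]\times\Omega}$ solves the associated martingale problem. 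Since the IPDE \eqref{PDPDE} is exactly the abstract equation \eqref{AbstractEq} for the choice $\Psi = X$, the notion of decoupled mild solution appearing in the statement is precisely the one of Definition \ref{Abmildsoluv}.

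Next I would verify the two sets of hypotheses required by Theorem \ref{AbstractTheorem}. The validity of Hypothesis \ref{HypN2} for $\Psi^1,\dots,\Psi^d = X^1,\dots,X^d$ is exactly Proposition \ref{PropN}. For Hypothesis \ref{HypBSDE} on $(f,\xi)$, the measurability conditions are immediate (Borel $\xi$; jointly measurable $f$), and the Lipschitz condition in $(y,z)$ uniformly in $(t,\omega)$ is assumed outright. The one point needing an argument is the passage from polynomial growth to the square integrability built into Hypothesis \ref{HypBSDE}: since $\xi$ and $f(\cdot,\cdot,0,0)$ have polynomial growth, Lemma \ref{Moments} — equivalently item 2 of Corollary \ref{Cpoly}, used with an exponent at least $2$ — yields $\xi \in \mathcal{L}^q(\mathbbm{P}^{s,\eta})$ for every finite $q$ and every $(s,\eta)$, and $\sup_{r\in[s,T]}|f(r,\cdot,0,0)| \in \mathcal{L}^2(\mathbbm{P}^{s,\eta})$ for every $(s,\eta)$, hence $\xi \in \mathcal{L}^2(\mathbbm{P}^{s,\eta})$ for all $(s,\eta)$ and $f(\cdot,\cdot,0,0) \in \mathcal{L}^2_{uni}$. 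This is the only mildly technical step, and it is precisely the place where the (weaker) polynomial-growth hypothesis replaces the $\mathcal{L}^2$-type assumption used at the abstract level; the underlying substance is the moment estimate of Lemma \ref{Moments} for the canonical process.

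With all hypotheses in force, I would invoke Theorem \ref{AbstractTheorem}. Item 1 of that theorem gives existence and uniqueness (with $Z$ unique q.s.) of a solution $(Y,Z) \in \mathcal{L}^2_{uni}\times(L^2_{uni})^d$ of $IP(f,\xi)$, which is item 1 here, together with the existence of a decoupled mild solution $Y$ of \eqref{PDPDE}. For the uniqueness claim in item 2, observe that by Definition \ref{Abmildsoluv} any decoupled mild solution $\bar Y$ is accompanied by some $\bar Z \in (\mathcal{L}^2_{uni})^d$ with $(\bar Y,\bar Z)$ a solution of $IP(f,\xi)$; the uniqueness just established forces $\bar Y = Y$, which proves item 2. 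Finally, item 2 of Theorem \ref{AbstractTheorem}, combined with Proposition \ref{Defuv}, identifies $Y$ with $(s,\eta)\mapsto Y^{s,\eta}_s$, shows that $Y^{s,\eta}$ is a $\mathbbm{P}^{s,\eta}$-modification of $Y$ on $[s,T]$, and gives $Z_t = \frac{d\langle M^{s,\eta},M[X]^{s,\eta}\rangle_t}{dt}$, $dt\otimes\mathbbm{P}^{s,\eta}$ a.e., which is item 3. I do not expect a genuine obstacle here beyond bookkeeping: the whole proof consists in matching the concrete data $(\mathcal{D}(A),A)$, $\Psi = X$, $V(t)=t$ to the abstract hypotheses, the only non-formal ingredient being the moment bounds that turn polynomial growth into integrability under every $\mathbbm{P}^{s,\eta}$.
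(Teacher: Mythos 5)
Your proposal is correct and follows essentially the same route as the paper: the paper's proof likewise reduces everything to Theorem \ref{AbstractTheorem}, checking Hypothesis \ref{HypN2} via Proposition \ref{PropN} and Hypothesis \ref{HypBSDE} via item 2 of Corollary \ref{Cpoly} (polynomial growth plus the moment bounds of Lemma \ref{Moments}) together with the Lipschitz assumption. Your write-up merely makes explicit a few bookkeeping steps (the framework verification via Corollary \ref{C516} and the passage from item 1 to the uniqueness in item 2) that the paper leaves implicit.
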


\begin{proof}
It is a consequence of Theorem \ref{AbstractTheorem}. Indeed
firstly  Hypothesis \ref{HypN2} holds
	by Proposition \ref{PropN} and $\Psi:=X$ satisfies
 Hypothesis \ref{HypN2}; secondly  
  $\xi$ and $f(\cdot,\cdot,0,0)$, being  
of polynomial growth 
then $(f,\xi)$ fulfill Hypothesis
 \ref{HypBSDE}
because of item 2. of  Corollary \ref{Cpoly} and
the Lipschitz property of $f$ in $(y,z)$.

\end{proof}

\begin{remark} \label{R523}
As anticipated in the introduction,
given the family of solutions $(Y^{s,\eta}, M^{s,\eta})$ 
of $BSDE^{s,\eta}(f,\xi)$, 
 we have obtained an
 analytical characterization of the process 
$\frac{d\langle  M^{s,\eta},M[X]^{s,\eta}\rangle}{dt}$.
This constitutes indeed the ''identification'' of 
the ''second component'' of a solution to a BSDE via
solving an analytical problem.
\begin{enumerate}
\item  Indeed, by item 3. of Theorem \ref{MainTheorem}, that process is 
$dt \otimes d\mathbbm{P}^{s,\eta}$ a.e.
equal to the q.s. unique functional $Z$  such that $(Y,Z)$ fulfills \eqref{MildEq}. 
\item If $\Gamma(Y,X)$ (hence $\sigma\sigma^{\intercal}\nabla Y$ if $\gamma\equiv 0$)
is well-defined then $(Y,\Gamma(Y,X))$ 
fulfills equation \eqref{MildEq},
 see item 3. of Proposition
\ref{classical}. 
 \item Previous analytical characterization of
$\frac{d\langle  M^{s,\eta},M[X]^{s,\eta}\rangle}{dt}$
 is not possible  with the theory of viscosity solutions,
 even in the case of classical Pardoux-Peng Markovian Brownian BSDEs.
\end{enumerate}

\end{remark}

The link between decoupled mild solutions and classical solutions is the following.
\begin{proposition}\label{classical}
	\begin{enumerate}\
		\item Let $\Phi$ be a classical solution of $IPDE(f,\xi)$, 
see Definition \ref{DefClasical}. Then $(\Phi,\Gamma(\Phi,X))$ is a solution of the identification problem $IP(f,\xi)$ (see Definition \ref{Abmildsoluv}) and in particular, $\Phi$ is a decoupled mild solution of $IPDE(f,\xi)$;
		\item there is at most one classical solution of $IPDE(f,\xi)$;
		\item assume that the unique decoupled mild solution $Y$ of $IPDE(f,\xi)$ verifies $Y\in\mathcal{D}(A)$ and $YX^i\in\mathcal{D}(A),\quad 0\leq i\leq d$, then $Y$ is a classical solution q.s., in the sense that $Y_T=\xi$ (for all $\omega$) and that $A(Y)=-f(\cdot,\cdot,Y,\Gamma(Y,X))$ q.s., see
 Definition \ref{zeropotential}.
	\end{enumerate}
\end{proposition}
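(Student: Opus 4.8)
The plan is to treat the three items in turn, the first two being quick consequences of the abstract machinery (Theorem \ref{AbstractTheorem}) together with the weak-generator identity, and the third being essentially a ``differentiation'' of the mild formulation back to the pointwise PDE.

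\emph{Item 1.} Let $\Phi$ be a classical solution, so $\Phi, \Phi X^i \in \mathcal{D}(A)$ and $A(\Phi) = -f(\cdot,\cdot,\Phi,\Gamma(\Phi,X))$ on $[0,T]\times\Omega$ with $\Phi_T = \xi$. First I would set $Z := \Gamma(\Phi,X)$, whose explicit form is given by Proposition \ref{P621}; by Lemma \ref{bracketGammabis} and Corollary \ref{C516} it is q.s. equal to $\frac{d\langle M[\Phi],M[X]\rangle}{dt}$, and since $\Gamma(X^i)$ is bounded (Proposition \ref{PropN}) while $\Phi$ has polynomial growth, one checks $Z^i \in \mathcal{L}^2_{uni}$; similarly $\Phi \in \mathcal{L}^2_{uni}$ by item 2.\ of Corollary \ref{Cpoly}. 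Then one applies the weak generator identity of Definition \ref{WeakGen} to $\Phi$ between $s$ and $T$: $P_s[\Phi_T](\eta) = \Phi_s(\eta) + \int_s^T P_s[A(\Phi)_r](\eta)\,dr$, and substituting $A(\Phi) = -f(\cdot,\cdot,\Phi,Z)$ and $\Phi_T=\xi$ yields the first line of \eqref{AbMildEq}. For the remaining $d$ lines, one applies Definition \ref{WeakGen} to $\Phi X^i \in \mathcal{D}(A)$, writes $A(\Phi X^i) = \Gamma(\Phi,X^i) + \Phi A(X^i) + X^i A(\Phi) = Z^i + \Phi\beta^i - X^i f(\cdot,\cdot,\Phi,Z)$ (using $A(X^i)=\beta^i$ from Remark \ref{RX} and $\Psi^i = X^i$, $A(\Psi^i)=\beta^i$), and again substitutes; this gives precisely the $i$-th line of \eqref{AbMildEq}. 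Hence $(\Phi,\Gamma(\Phi,X))$ solves $IP(f,\xi)$, and by Definition \ref{Abmildsoluv} $\Phi$ is a decoupled mild solution.

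\emph{Item 2.} This is immediate from item 1 and the uniqueness part of Theorem \ref{MainTheorem} (item 2), or equivalently Theorem \ref{AbstractTheorem}: if $\Phi, \bar\Phi$ are two classical solutions, both are decoupled mild solutions, hence identical as elements of $\mathcal{L}^2_{uni}$; since both are genuine functionals in $\mathcal{D}(A)\subset \mathbbm{C}^{1,2}_b(\Lambda_T)$, continuous at fixed times, and agree q.s., they agree everywhere on $\Lambda_T$ — here one uses that a set of zero potential, together with Lemma \ref{Moments} and the support properties of the $\mathbbm{P}^{s,\eta}$, cannot carry two distinct continuous-at-fixed-time functionals.

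\emph{Item 3.} Now suppose the unique decoupled mild solution $Y$ happens to lie in $\mathcal{D}(A)$ with $YX^i\in\mathcal{D}(A)$. By Theorem \ref{MainTheorem}, $(Y,Z)$ solves $IP(f,\xi)$ with $Z$ q.s.\ unique; but by item 1 applied in reverse, $(Y,\Gamma(Y,X))$ also satisfies the mild system \eqref{AbMildEq} once we know $Y \in \mathcal{D}(A)$ — wait, more carefully: since $Y\in\mathcal{D}(A)$ and $YX^i\in\mathcal{D}(A)$, the computation in item 1 shows that any solution of $IP(f,\xi)$ with this $Y$ must have $Z^i = \Gamma(Y,X^i)$ q.s. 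Indeed, the first line of \eqref{AbMildEq} together with Definition \ref{WeakGen} for $Y$ gives $\int_s^T P_s[A(Y)_r + f(r,\cdot,Y_r,Z_r)](\eta)\,dr = 0$ for all $(s,\eta)$; differentiating in $s$ (using continuity in $r$ and the projector property $P_s\circ P_r = P_s$) forces $A(Y)_r = -f(r,\cdot,Y_r,Z_r)$ q.s. Similarly the $i$-th line combined with Definition \ref{WeakGen} for $YX^i$ and the product expansion of $A(YX^i)$ forces $Z^i = \Gamma(Y,X^i)$ q.s. Finally $Y_T = \xi$ for all $\omega$ follows since $Y$ is, by construction in Proposition \ref{Defuv}, given by $(s,\eta)\mapsto Y^{s,\eta}_s$ and $Y^{T,\eta}_T = \xi(\eta)$. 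This shows $Y$ is a classical solution q.s.

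\emph{Main obstacle.} The delicate point is the ``differentiation'' step in item 3: passing from the integrated mild identity $\int_s^T P_s[\,\cdot\,]\,dr = 0$ (valid for all $s$) to the pointwise (q.s.) identity $A(Y) = -f(\cdot,\cdot,Y,\Gamma(Y,X))$. One must argue that for each $r$, the function $(s,\eta)\mapsto P_s[\varphi_r](\eta)$ (with $\varphi_r := A(Y)_r + f(r,\cdot,Y_r,Z_r)$) being such that $\int_s^T P_s[\varphi_r]\,dr$ vanishes identically in $s$ implies $P_r[\varphi_r](\eta) = \varphi_r(\eta)$ q.s.; this needs the right-continuity in $s$ of $r\mapsto P_s[\varphi_r]$, the semigroup-type relation, and the fact that $P_r[\psi](\eta) = \psi(\eta)$ when $\psi$ is $\mathcal{F}^o_r$-measurable — exactly the defining property of a path-dependent system of projectors — applied after noticing $\varphi_r$ is $\mathcal{P}ro^o$-measurable and integrating against the zero-potential characterization. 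Handling the $\mathcal{L}^1$ versus $\mathcal{L}^2$ integrability bookkeeping for $\Gamma(Y,X)$ and $Y$, so that all the $P_s[\cdot]$ expressions are well-defined and Fubini applies, is the routine-but-unavoidable companion difficulty.
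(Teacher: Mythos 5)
Your items 1 and 2 follow essentially the paper's own route: the weak-generator identity you invoke for $\Phi$ and $\Phi X^i$ is exactly what the paper gets by taking expectations of the martingales $M[\Phi]_{s,\cdot}$ and $M[\Phi X^i]_{s,\cdot}$ (the two formulations are equivalent by Proposition \ref{MPopWellPosed} and Corollary \ref{C516}), so item 1 is fine modulo one justification: $\Gamma(\Phi,X^i)\in\mathcal{L}^2_{uni}$ does not follow from ``$\Gamma(X^i)$ bounded and $\Phi$ of polynomial growth'' (there is no Kunita--Watanabe bound available since $\Phi^2$ need not lie in $\mathcal{D}(A)$); argue instead via the polynomial growth of $A(\Phi X^i)$, $\Phi A(X^i)=\Phi\beta^i$ and $X^iA\Phi$ (Corollary \ref{Cpoly} and Remark \ref{RX}), or via the explicit formula of Proposition \ref{P621}. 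In item 2 your detour through ``equal q.s.\ plus continuity at fixed times implies equal everywhere'' is both unnecessary and unsupported: the uniqueness in Theorem \ref{MainTheorem} (and Theorem \ref{AbstractTheorem}) is that two decoupled mild solutions are \emph{identical} as processes, since both coincide pointwise with $(s,\eta)\mapsto Y^{s,\eta}_s$, so uniqueness of classical solutions is immediate and no support-type argument about zero-potential sets is needed (nor proved anywhere in the paper).

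The genuine gap is in item 3, precisely at the step you yourself flag as the main obstacle. You need to pass from $\int_s^T P_s[\varphi_r](\eta)\,dr=0$ for all $(s,\eta)$ (with $\varphi=A(Y)+f(\cdot,\cdot,Y,Z)$, and then again for $Z^i-\Gamma(Y,X^i)$) to $\varphi=0$ q.s., and you propose to differentiate in $s$ using ``right-continuity in $s$ of $P_s[\varphi_r]$'' and a semigroup relation. No such regularity of $s\mapsto P_s$ exists in this framework (it is never assumed, and $\varphi_r$ is neither bounded nor regular), so as sketched this step does not go through and item 3 is not proved. Two repairs are available with the paper's tools. The paper's own argument bypasses the mild identity: since $Y\in\mathcal{D}(A)$, $Y-Y_s-\int_s^{\cdot}A(Y)_r\,dr$ is a cadlag $\mathbbm{P}^{s,\eta}$-martingale (Proposition \ref{MPSDEeq} and Theorem \ref{ItoFunct}), while item 3 of Theorem \ref{MainTheorem} provides the second decomposition $Y^{s,\eta}=Y_s(\eta)-\int_s^{\cdot}f(r,\cdot,Y_r,Z_r)\,dr+M^{s,\eta}$; uniqueness of the special semimartingale decomposition yields $A(Y)=-f(\cdot,\cdot,Y,Z)$ q.s.\ and $M^{s,\eta}=M^{s,\eta}[Y]$, after which $Z=\Gamma(Y,X)$ q.s.\ follows by comparing $\langle M^{s,\eta},M[X]^{s,\eta}\rangle=\int_s^{\cdot}Z_r\,dr$ with Lemma \ref{bracketGammabis}. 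Alternatively, staying closer to your route, the vanishing of the mild residual can be upgraded as in the uniqueness proof of Theorem \ref{AbstractTheorem}: applying the identity at $(t,\omega)$ and using Remark \ref{Borel}, the process $t\mapsto\int_s^t\varphi_r\,dr$ is a $\mathbbm{P}^{s,\eta}$-martingale which is continuous and of finite variation, hence identically zero, giving $\varphi=0$ $dt\otimes d\mathbbm{P}^{s,\eta}$-a.e.\ for every $(s,\eta)$, i.e.\ q.s. Either repair closes the gap; as written, the differentiation step fails.
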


\begin{proof}
\begin{enumerate}
\item	
 Let $\Phi$ be a classical solution. First, 
since $\Phi$ and $\Phi X^i$  belong to $\mathcal{D}(A)$ for all $i\leq d$;
taking into account Notation \ref{NotGamma},
by items 1. 2. of Corollary \ref{Cpoly} 
we can show that
$\Phi,\Gamma(\Phi,X^1),\cdots,\Gamma(\Phi,X^d)$ belong to $\mathcal{L}^2_{uni}$. \\
\\
On the other hand, let  $(s,\eta)\in[0,T]\times\Omega$.
By Corollary \ref{C516} 3.
$M[\Phi]_{s,\cdot}  := \Phi-\Phi_s(\eta)-\int_s^{\cdot}A\Phi_rdr,$ and
 $M[\Phi X^i]_{s,\cdot}:= \Phi X^i-\Phi_s(\eta)\eta^i(s)-\int_s^{\cdot}A(\Phi X^i)_rdr,\quad 1\leq i\leq d$, 
 are $\mathbbm{P}^{s,\eta}$-martingales on $[s,T]$ 
vanishing at time $s$. 
By Definition \ref{DefClasical} we have $A\Phi=-f(\cdot,\cdot,\Phi,\Gamma(\Phi,X))$ and by Remark \ref{RX}
and Notation \ref{NotGamma}
we have 
$$A(\Phi X^i)=\Gamma(\Phi X^i)+\Phi AX^i+X^iA\Phi=\Gamma(\Phi X^i)+\Phi\beta^i-X^if(\cdot,\cdot,\Phi,\Gamma(\Phi,X)),$$
 so the previously mentioned martingales indexed by $[s,T]$, 
can be rewritten
\small
\begin{equation}\label{PreviousMart}
\left\{\begin{array}{rcl}
M[\Phi]_{s, \cdot} &=&  \Phi-\Phi_s(\eta)+\int_s^{\cdot}f(r,\cdot,\Phi_r,\Gamma(\Phi,X)_r)dr\\
M[\Phi X^i]_{s,\cdot} &=& 
  \Phi X^i-\Phi_s(\eta)\eta^i(s)-\int_s^{\cdot}(\Gamma(\Phi X^i)_r+\Phi_r\beta_r^i-X^i_rf(r,\cdot,\Phi,\Gamma(\Phi,X)_r))dr,\\
  &&1\leq i\leq d.
\end{array}\right.
\end{equation}
 \normalsize
 Finally, again by Definition \ref{DefClasical} we have  $\Phi_T=\xi$, 
so, for any $(s,\eta)$,
 taking the expectations in \eqref{PreviousMart} at $s = T$, we get  
	\small
	\begin{equation}
	\left\{
	\begin{array}{l}
	\mathbbm{E}^{s,\eta}\left[\xi-\Phi_s(\eta)+\int_s^{T}f(r,\cdot,\Phi_r,\Gamma(\Phi,X)_r)dr\right]=0;\\
	\mathbbm{E}^{s,\eta}\left[\xi X^i_T-\Phi_s(\eta)\eta^i(s)-\int_s^{T}(\Gamma(\Phi X^i)_r+\Phi_r\beta_r^i-X^i_rf(r,\cdot,\Phi,\Gamma(\Phi,X)_r))dr\right]=0,\\
	1\leq i\leq d,
	\end{array}
	\right.
	\end{equation}
	\normalsize
	which by Fubini's Lemma and Definition \ref{ProbaOp} yields
	\small
	\begin{equation}
	\left\{
	\begin{array}{rl}
	\Phi_s(\eta)=&P_s[\xi](\eta)+\int_s^TP_s\left[f\left(r,\cdot,\Phi_r,\Gamma(\Phi,X)_r\right)\right](\eta)dr\\
	(\Phi X^1)_s(\eta) =&P_s[\xi X^1_T](\eta) -\int_s^TP_s\left[\left(\Gamma(\Phi X^1)_r+\Phi_r\beta^1_r-X^1_rf\left(r,\cdot,\Phi_r,\Gamma(\Phi,X)_r\right)\right)\right](\eta)dr\\
	\cdots&\\
	(\Phi X^d)_s(\eta) =&P_s[\xi X^d_T](\eta) -\int_s^TP_s\left[\left(\Gamma(\Phi, X^d)_r+\Phi_r\beta^d_r-X^d_rf\left(r,\cdot,\Phi_r,\Gamma(\Phi,X)_r\right)\right)\right](\eta)dr,
	\end{array}\right.
	\end{equation}
	\normalsize
	and the first item is proven.
\item
	The second item follows from item 1. and by the uniqueness of a decoupled mild solution of $IPDE(f,\xi)$, see Theorem \ref{MainTheorem}. 
	\item
	Concerning item 3. let $(Y,Z)$ be the unique decoupled mild solution of $IP(f,\xi)$. We first note that the first line of \eqref{MildEq} taken with $s=T$ yields $Y_T=\xi$. 
	
	Let us now fix some $(s,\eta)\in[0,T]\times\Omega$. The fact that $Y\in\mathcal{D}(A)$ implies by Proposition \ref{MPSDEeq}  that
 $Y-Y_s-\int_s^{\cdot}AY_rdr$ is on $[s,T]$ a $\mathbbm{P}^{s,\eta}$-martingale and by Theorem \ref{ItoFunct} that this martingale, which we shall denote $M^{s,\eta}[Y]$, is $\mathbbm{P}^{s,\eta}$ a.s. cadlag. Hence  $Y$ is under $\mathbbm{P}^{s,\eta}$ a cadlag special semi-martingale.
Let us keep in mind the solution  $(Y^{s,\eta}, M^{s,\eta})  $ 	
of \eqref{BSDESDE}.
A consequence of item 3. of 
Theorem \ref{MainTheorem}
is that $Y$ admits on $[s,T]$, $Y^{s,\eta}$ 
as $\mathbbm{P}^{s,\eta}$ cadlag version  
 which is a special semi-martingale verifying  
$ Y^{s,\eta}_t = Y_s^{s,\eta} - \int_s^tf(r,Y_r,Z_r)dt + M^{s,\eta}_t, t \in [s,T]$.
 Since $Y$ is $\mathbbm{P}^{s,\eta}$ a.s. cadlag, then $Y$ and $Y^{s,\eta}$ are actually $\mathbbm{P}^{s,\eta}$-indistinguishable on $[s,T]$ and by uniqueness of the decomposition of 
the semi-martingale $Y$, we have that $(\int^{\cdot}_sAY_rdr,M^{s,\eta}[Y])$ and $(-\int^{\cdot}_sf(r,Y_r,Z_r)dr,M^{s,\eta})$ are
  $\mathbbm{P}^{s,\eta}$-indistinguishable on $[s,T]$. Since this holds for all
 $(s,\eta)$, by Definition \ref{zeropotential} we have
 $AY=-f(\cdot,\cdot,Y,Z)$ q.s. so we are left to show that $Z=\Gamma(Y,X)$ q.s.
	 
	 We fix again $(s,\eta)$. 
By item 3. of Theorem \ref{MainTheorem},
 $\langle M^{s,\eta},M^{s,\eta}[X]\rangle = \int_s^{\cdot}Z_rdr$. 
 By item 3. of  Corollary \ref{C516} and
 Lemma \ref{bracketGammabis}, $\langle M^{s,\eta}[Y],M^{s,\eta}[X]\rangle = \int_s^{\cdot}\Gamma(Y,X)_rdr$. As we  have remarked above,
 $M^{s,\eta}=M^{s,\eta}[Y]$ so $\int_s^{\cdot}Z_rdr$ and
 $\int_s^{\cdot}\Gamma(Y,X)_rdr$ are $\mathbbm{P}^{s,\eta}$-indistinguishable on $[s,T]$. Since this holds for all $(s,\eta)$, we indeed have by Definition \ref{zeropotential} that $Z=\Gamma(Y,X)$ q.s., and the proof is complete.
\end{enumerate}
\end{proof}

\begin{appendix}

\section{Appendix: some technicalities}

In all the appendix, we are in the framework of Section \ref{S2}.
\begin{lemma}\label{LemmaBorel} 
Let $\tilde{f}\in\mathcal{L}^1_{uni}$. Then 
$\begin{array}{rcl}
(s,\eta)&\longmapsto &\mathbbm{E}^{s,\eta}[\int_s^{T}\tilde{f}_rdV_r]\\
\ [0,T]\times\Omega&\longrightarrow&\mathbbm{R}
\end{array}$ is $\mathbbm{F}^o$-progressively measurable. 
\end{lemma}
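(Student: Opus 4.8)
The plan is to reduce the statement to the case where $\tilde f$ is bounded, and then to split the time integral at the running initial time $s$ in order to isolate a genuine random variable (to which the progressivity part of Remark \ref{Borel} applies) from a piece that becomes deterministic under $\mathbbm{P}^{s,\eta}$.

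First I would dispose of integrability. By linearity it is enough to treat $\tilde f\ge 0$, since $\tilde f^{\pm}$ both lie in $\mathcal L^1_{uni}$. Truncating, $\tilde f^{n}:=\tilde f\wedge n$ is bounded, $\mathbbm F^o$-progressively measurable, and increases pointwise to $\tilde f$; two successive applications of monotone convergence (first pathwise against $dV_r$, then against $\mathbbm P^{s,\eta}$) show that $\mathbbm E^{s,\eta}\big[\int_s^T\tilde f^{n}_r dV_r\big]$ increases to $\mathbbm E^{s,\eta}\big[\int_s^T\tilde f_r dV_r\big]$, the limit being finite for every $(s,\eta)$ precisely because $\tilde f\in\mathcal L^1_{uni}$. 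Since a pointwise limit of $\mathbbm F^o$-progressively measurable functions is again $\mathbbm F^o$-progressively measurable, it suffices to prove the lemma for bounded $\tilde f$.

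For bounded $\tilde f$ I would write, for every $(s,\eta)$,
\[
\mathbbm E^{s,\eta}\Big[\int_s^T\tilde f_r dV_r\Big]
=\mathbbm E^{s,\eta}\Big[\int_0^T\tilde f_r dV_r\Big]-\mathbbm E^{s,\eta}\Big[\int_0^s\tilde f_r dV_r\Big],
\]
all three quantities being finite because $\tilde f$ is bounded and $V$ is finite on $[0,T]$. The first term on the right is $(s,\eta)\mapsto\mathbbm E^{s,\eta}[Z]$ with $Z:=\int_0^T\tilde f_r dV_r$ a bounded $\mathcal F^o_T$-measurable random variable; since the path-dependent canonical class is progressive, Remark \ref{Borel} yields directly that this map is $\mathbbm F^o$-progressively measurable. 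For the second term, the point is that $\tilde f$, being progressive, is in particular adapted, so $\tilde f_r$ is $\mathcal F^o_r$-measurable and hence $\tilde f_r(\omega)=\tilde f_r(\eta)$ whenever $\omega^s=\eta^s$ and $r\le s$; as $\mathbbm P^{s,\eta}(\omega^s=\eta^s)=1$ this gives $\mathbbm E^{s,\eta}\big[\int_0^s\tilde f_r dV_r\big]=\int_0^s\tilde f_r(\eta)dV_r$. Finally, for fixed $t$ the map $(r,\eta)\mapsto\tilde f_r(\eta)$ is $\mathcal B([0,t])\otimes\mathcal F^o_t$-measurable on $[0,t]\times\Omega$, so $(s,r,\eta)\mapsto\mathds 1_{\{r\le s\}}\tilde f_r(\eta)$ is jointly measurable and integrating out $r$ against the finite measure $dV$ on $[0,t]$ (Fubini's theorem) leaves $(s,\eta)\mapsto\int_0^s\tilde f_r(\eta)dV_r$ $\mathcal B([0,t])\otimes\mathcal F^o_t$-measurable; letting $t$ vary gives $\mathbbm F^o$-progressive measurability. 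Subtracting the two contributions concludes.

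The only real idea here is the splitting at $s$: it converts the troublesome joint dependence of both the integration bound and of the measure $\mathbbm P^{s,\eta}$ on $s$ into, on one hand, the expectation of a single fixed random variable (already handled by the progressivity hypothesis recorded in Remark \ref{Borel}) and, on the other hand, a path-functional of $\eta$ that no longer involves any expectation. The truncation step is needed only because membership in $\mathcal L^1_{uni}$ does not by itself control $\int_0^s|\tilde f_r(\eta)|dV_r$ for an individual path $\eta$; everything else is routine measurability bookkeeping, so I do not expect a genuine obstacle.
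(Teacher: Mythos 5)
Your argument is correct, but it follows a genuinely different route from the paper's. The paper keeps the troublesome lower limit of integration as a third variable: it truncates $\tilde f$ two-sidedly, shows that $k^n(s,\eta,t)=\mathbbm{E}^{s,\eta}[\int_t^T((-n)\vee\tilde f_r\wedge n)dV_r]$ is measurable in $(s,\eta)$ for fixed $t$ (Remark \ref{Borel}) and continuous in $t$ for fixed $(s,\eta)$ (dominated convergence, using the continuity of $V$), invokes a Carath\'eodory-type joint-measurability result (Lemma 4.51 in \cite{aliprantis}) to get measurability of $k^n$ in all three variables, composes with the diagonal $(s,\eta)\mapsto(s,\eta,s)$, and finally lets $n\to\infty$ by dominated convergence. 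You instead eliminate the moving lower limit by writing $\int_s^T=\int_0^T-\int_0^s$: the first piece is the expectation of one fixed bounded r.v., covered directly by the progressivity statement of Remark \ref{Borel}, and the second piece collapses to the deterministic quantity $\int_0^s\tilde f_r(\eta)dV_r$ thanks to $\mathbbm{P}^{s,\eta}(\omega^s=\eta^s)=1$ (item 1 of Definition \ref{DefCondSyst}) together with the Galmarino-type fact that an $\mathcal{F}^o_r$-measurable map is constant on paths agreeing on $[0,r]$; its progressive measurability is then plain Fubini, and your one-sided truncation with monotone convergence handles the unbounded case. What each approach buys: yours is more elementary (no Carath\'eodory lemma, and continuity of $V$ is never used, only its finiteness on $[0,T]$), but it leans on structural features the paper's proof does not need at this point, namely the concentration property of $\mathbbm{P}^{s,\eta}$ on $\{\omega^s=\eta^s\}$ and the fact that $\tilde f$ is adapted to the \emph{raw} filtration $\mathbbm{F}^o$ (the pathwise identification $\tilde f_r(\omega)=\tilde f_r(\eta)$ would fail for an augmented or right-continuous filtration); the paper's argument, by contrast, only uses the progressivity of the canonical class and the continuity of $V$, and so is insensitive to these points. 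Both proofs are complete; yours has no gap.
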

\begin{proof}
We fix $T_0\in]0,T]$
 and we will show that on $[0,T_0]\times\Omega$, $(s,\eta)\longmapsto \mathbbm{E}^{s,\eta}[\int_s^T\tilde{f}_rdV_r]$ is $\mathcal{B}([0,T_0])\otimes\mathcal{F}^o_{T_0}$-measurable.
We will start by showing that on $[0,T_0]\times \Omega\times [0,T_0]$, the function 
\\
$k^n:(s,\eta,t)\mapsto \mathbbm{E}^{s,\eta}[\int_{t}^T((-n)\vee \tilde{f}_r\wedge n)dV_r]$  is $\mathcal{B}([0,T_0])\otimes\mathcal{F}^o_{T_0}\otimes\mathcal{B}([0,T_0])$-measurable, where $n\in\mathbbm{N}$. 
\\
Let $t\in[0,T_0]$ be fixed. Then by Remark \ref{Borel}
\\
 $(s,\eta)\mapsto\mathbbm{E}^{s,\eta}[\int_{t}^{T}((-n)\vee \tilde{f}_r\wedge n)dV_r]$ is 
 $\mathcal{B}([0,T_0])\otimes\mathcal{F}^o_{T_0}$-measurable. 
 \\
 Let $(s,\eta)\in[0,T_0]\times \Omega$ be fixed and $t_m\underset{m\rightarrow\infty}{\longrightarrow} t$ be a converging sequence in $[0,T_0]$.
 \\
 Since $V$ is continuous, 
\begin{equation} \label{EFAS}
\int_{t_m}^{T}((-n)\vee \tilde{f}_r\wedge n)dV_r\underset{m\rightarrow\infty}{\longrightarrow}\int_{t}^T((-n)\vee \tilde{f}_r\wedge n)dV_r \ {\rm a.s.}
\end{equation}
This sequence is uniformly bounded by 
$n V_T$, 
 so by dominated convergence theorem, the convergence in \eqref{EFAS} also
 holds under the expectation, so that $t\mapsto \mathbbm{E}^{s,\eta}[\int_{t}^T((-n)\vee \tilde{f}_r\wedge n)dV_r]$ is continuous. By Lemma 4.51 in \cite{aliprantis}, $k^n$ is therefore 
 $\mathcal{B}([0,T_0])\otimes\mathcal{F}^o_{T_0}\otimes\mathcal{B}([0,T_0])$-measurable.
\\
The  composition of $(s,\eta)\mapsto (s,\eta,s)$ with the maps
$k_n$ yields that, for any $n\geq 0$, $\tilde{k}^n:(s,\eta)\longmapsto\mathbbm{E}^{s,\eta}[\int_{s}^T((-n)\vee \tilde{f}_r\wedge n)dV_r]$ is (on $[0,T_0]\times \Omega$) $\mathcal{B}([0,T_0])\otimes\mathcal{F}^o_{T_0}$-measurable. $\tilde{k}^n$ therefore defines an $\mathbbm{F}^o$-progressively measurable process.
 Then by letting $n$ tend to infinity,  $((-n)\vee \tilde{f}\wedge n)$  tends $dV\otimes d\mathbbm{P}^{s,\eta}$ a.e. to $\tilde{f}$ and since we assumed 
$\mathbbm{E}^{s,\eta}[\int_s^T|\tilde{f}_r|dV_r]<\infty$, by dominated convergence, $\mathbbm{E}^{s,\eta}[\int_{s}^T((-n)\vee \tilde{f}_r\wedge n)dV_r]$ tends to $\mathbbm{E}^{s,\eta}[\int_s^T\tilde{f}_rdV_r]$. 
\\
$(s,\eta)\longmapsto \mathbbm{E}^{s,\eta}[\int_s^T\tilde{f}(r,X_r)dV_r]$ is therefore  an $\mathbbm{F}^o$-progressively measurable process as the pointwise limit of the $\tilde{k}^n$ which are  $\mathbbm{F}^o$-progressively measurable processes.
\end{proof}

We recall the following immediate consequence of Fubini's Theorem which corresponds to Lemma 5.13 in \cite{paper1preprint}.
\begin{lemma}\label{ModifImpliesdV}
	Let $\mathbbm{P}$ be a probability measure on $(\Omega,\mathcal{F})$ and  $\phi,\psi$ be two measurable processes. If $\phi$ and $\psi$ are $\mathbbm{P}$-modifications of each other, then they are equal $dV\otimes d\mathbbm{P}$ a.e.
\end{lemma}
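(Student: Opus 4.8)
The plan is to prove the statement as a direct application of the Tonelli--Fubini theorem, which is indeed all that is needed. First I would introduce the exceptional set
\[
C:=\{(t,\omega):\ \phi_t(\omega)\neq\psi_t(\omega)\},
\]
over the bounded time interval on which $\phi,\psi$ are defined (and on which $V$ is finite). Since $\phi$ and $\psi$ are measurable processes, the pair $(t,\omega)\mapsto(\phi_t(\omega),\psi_t(\omega))$ is jointly measurable, and as the diagonal is measurable, $C$ belongs to the product $\sigma$-field $\mathcal{B}(\cdot)\otimes\mathcal{F}$. The conclusion ``$\phi=\psi$, $dV\otimes d\mathbbm{P}$ a.e.'' is by definition the assertion that $dV\otimes d\mathbbm{P}(C)=0$.

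Next I would compute $dV\otimes d\mathbbm{P}(C)$ by iterated integration. Because $V$ is non-decreasing and continuous on a bounded interval, $dV$ is a finite measure, so $dV\otimes d\mathbbm{P}$ is a finite (hence $\sigma$-finite) measure and Tonelli's theorem applies to the non-negative integrand $\mathds{1}_C$. Hence
\[
dV\otimes d\mathbbm{P}(C)=\int\Big(\int\mathds{1}_C(t,\omega)\,dV_t\Big)d\mathbbm{P}(\omega)=\int\Big(\int\mathds{1}_C(t,\omega)\,d\mathbbm{P}(\omega)\Big)dV_t=\int\mathbbm{P}\big(\phi_t\neq\psi_t\big)\,dV_t.
\]
The hypothesis that $\phi$ and $\psi$ are $\mathbbm{P}$-modifications of each other means precisely that $\mathbbm{P}(\phi_t\neq\psi_t)=0$ for every $t$; therefore the last integrand is identically zero and the integral vanishes. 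This gives $dV\otimes d\mathbbm{P}(C)=0$, which is exactly the desired conclusion.

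There is no genuine obstacle in this argument: the only two points needing a (one-line) verification are the joint measurability of $C$ --- immediate from the standing assumption that $\phi$ and $\psi$ are measurable processes --- and the applicability of Tonelli's theorem, which holds since $dV\otimes d\mathbbm{P}$ is in fact a finite measure. The statement is thus, as announced just before it, merely a repackaging of Fubini's theorem, and coincides with Lemma 5.13 of \cite{paper1preprint}.
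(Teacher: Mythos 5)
Your proof is correct and is exactly the argument the paper has in mind: it states the lemma as an ``immediate consequence of Fubini's Theorem'' (citing Lemma 5.13 of \cite{paper1preprint}), and your Tonelli computation of $dV\otimes d\mathbbm{P}(C)=\int\mathbbm{P}(\phi_t\neq\psi_t)\,dV_t=0$ is precisely that consequence spelled out.
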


The proof of Proposition \ref{Defuv} goes through 
a linearization lemma.
\begin{lemma} \label{L41}
Let $\tilde{f}\in\mathcal{L}^2_{uni}$.
Let, for every $(s,\eta)\in[0,T]\times \Omega$, 
  $(\tilde Y^{s,\eta},\tilde M^{s,\eta})$ be the unique solution of 
\begin{equation}\label{FBSDEftilde}
\tilde Y^{s,\eta}_t = \xi + \int_t^T \tilde f_rdV_r  -(\tilde M^{s,\eta}_T - \tilde M^{s,\eta}_t),\quad t\in[s,T],
\end{equation}
in $\left(\Omega,\mathcal{F}^{s,\eta},\mathbbm{F}^{s,\eta},\mathbbm{P}^{s,\eta}\right)$.
  Then there exists a process $\tilde Y\in\mathcal{L}^2_{uni}$, a  square integrable path-dependent MAF $(\tilde M_{t,u})_{0\leq t\leq u}$
$\tilde Z^1,\cdots,\tilde Z^d\in\mathcal{L}^2_{uni}$,
 such that for all $(s,\eta)\in[0,T]\times \Omega$ the following holds.
  \begin{enumerate}
  	\item $\tilde Y^{s,\eta}$ is  on $[s,T]$ a $\mathbbm{P}^{s,\eta}$-modification of $\tilde Y$;
  	\item $\tilde M^{s,\eta}$ is the cadlag version of $\tilde M$ under $\mathbbm{P}^{s,\eta}$.
\item For each integer
$1 \le i \le d$,
  $\tilde Z^i=\frac{d\langle  \tilde M^{s,\eta},N^{i,s,\eta}\rangle}{dV}$ $dV\otimes\mathbbm{P}^{s,\eta}$ a.e.
\end{enumerate}
\end{lemma}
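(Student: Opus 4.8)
The plan is to produce explicit candidates for $\tilde Y$, $\tilde M$ and the $\tilde Z^i$ out of the conditional--expectation representation of the \emph{linear} equation \eqref{FBSDEftilde}, and then to verify the three listed items. Observe first that, viewing $\tilde f\in\mathcal{L}^2_{uni}$ as a driver $f(t,\omega,y,z):=\tilde f_t(\omega)$ independent of $(y,z)$, Hypothesis \ref{HypBSDE} holds (with Lipschitz constant $K=0$), so Theorem \ref{T45} indeed furnishes the unique solution $(\tilde Y^{s,\eta},\tilde M^{s,\eta})\in\mathcal{L}^2(dV\otimes\mathbbm{P}^{s,\eta})\times\mathcal{H}^2_0(\mathbbm{P}^{s,\eta})$ of \eqref{FBSDEftilde}. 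Since the driver does not depend on the solution, \eqref{FBSDEftilde} is solved in closed form: writing $\zeta_t:=\xi+\int_t^T\tilde f_r\,dV_r$ (which lies in $\mathcal{L}^2(\mathbbm{P}^{s,\eta})$ because $\xi\in\mathcal{L}^2(\mathbbm{P}^{s,\eta})$, $\tilde f\in\mathcal{L}^2(dV\otimes\mathbbm{P}^{s,\eta})$ and $V$ is bounded on $[0,T]$), one has, for $t\in[s,T]$,
\[
\tilde Y^{s,\eta}_t=\mathbbm{E}^{s,\eta}[\zeta_t|\mathcal{F}_t],\qquad \tilde M^{s,\eta}_t=\tilde Y^{s,\eta}_t-\tilde Y^{s,\eta}_s+\int_s^t\tilde f_r\,dV_r,
\]
the process $t\mapsto\tilde Y^{s,\eta}_t+\int_s^t\tilde f_r\,dV_r$ being the cadlag version of the square integrable $\mathbbm{P}^{s,\eta}$-martingale $\mathbbm{E}^{s,\eta}[\zeta_s|\mathcal{F}_\cdot]$.

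Next I would set $\tilde Y:(s,\eta)\mapsto\mathbbm{E}^{s,\eta}[\zeta_s]$. Its $\mathbbm{F}^o$-progressive measurability follows from the second bullet of Remark \ref{Borel} (for the part involving $\xi$, using progressivity of the canonical class) and from Lemma \ref{LemmaBorel} (for the part $\int_s^T\tilde f_r\,dV_r$). Applying Hypothesis \ref{HypClass} (in the conditional--expectation form obtained by approximation, cf.\ Remark \ref{Borel}) together with Proposition \ref{ConditionalExp}, to the $\mathcal{F}_T$-measurable r.v.\ $\zeta_t$, one gets that for each fixed $t\in[s,T]$,
\[
\tilde Y^{s,\eta}_t=\mathbbm{E}^{s,\eta}[\zeta_t|\mathcal{F}_t](\omega)=\mathbbm{E}^{t,\omega}[\zeta_t]=\tilde Y_t(\omega)\quad\mathbbm{P}^{s,\eta}\text{-a.s.},
\]
which is item 1. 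In particular $\tilde Y^{s,\eta}=\tilde Y$ $dV\otimes\mathbbm{P}^{s,\eta}$-a.e.\ by Lemma \ref{ModifImpliesdV}, hence $\tilde Y\in\mathcal{L}^2_{uni}$; moreover $\tilde Y_s(\eta)=\tilde Y^{s,\eta}_s$ since the latter is $\mathbbm{P}^{s,\eta}$-a.s.\ constant by Proposition \ref{CoroTrivial}.

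For the additive functional I would define the random field $\tilde M_{t,u}(\omega):=\tilde Y_u(\omega)-\tilde Y_t(\omega)+\int_t^u\tilde f_r(\omega)\,dV_r$, $0\le t\le u$, which is $\mathcal{F}^o_u$-measurable because $\tilde Y$ is $\mathbbm{F}^o$-progressive and $\tilde f\in\mathcal{L}^2_{uni}\subset\mathcal{L}^1_{uni}$. Using item 1 at the two times $t$ and $u$, for $s\le t\le u$ one obtains $\tilde M^{s,\eta}_u-\tilde M^{s,\eta}_t=\tilde Y^{s,\eta}_u-\tilde Y^{s,\eta}_t+\int_t^u\tilde f_r\,dV_r=\tilde M_{t,u}$ $\mathbbm{P}^{s,\eta}$-a.s., so $\tilde M$ is a path-dependent MAF (Definition \ref{DefAF}) whose cadlag version under $\mathbbm{P}^{s,\eta}$ is $\tilde M^{s,\eta}$; this is item 2, and $\tilde M$ is square integrable since $\tilde M^{s,\eta}\in\mathcal{H}^2_0(\mathbbm{P}^{s,\eta})$ for every $(s,\eta)$.

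Finally, for each $1\le i\le d$, I would apply Proposition \ref{bracketMAFs} to the square integrable MAFs $\tilde M$ and $N^i$ (the latter with angular bracket absolutely continuous with respect to $V$, by Hypothesis \ref{HypN}): this yields an $\mathbbm{F}^o$-progressive process $\tilde Z^i:=\frac{d\langle\tilde M,N^i\rangle}{dV}$ with $\langle\tilde M^{s,\eta},N^{i,s,\eta}\rangle=\int_s^{\cdot\vee s}\tilde Z^i_r\,dV_r$ for all $(s,\eta)$, which is item 3. The main — and, I expect, essentially the only genuinely technical — point is to check that $\tilde Z^i\in\mathcal{L}^2_{uni}$: fixing $(s,\eta)$ and using Hypothesis \ref{HypN} to bound $d\langle N^{i,s,\eta}\rangle/dV$ q.s.\ by a constant $C$, the Kunita--Watanabe inequality applied to $\langle\tilde M^{s,\eta},N^{i,s,\eta}\rangle=\int_s^\cdot\tilde Z^i_r\,dV_r$ gives $\int_s^T|\tilde Z^i_r|^2\,dV_r\le C\,\langle\tilde M^{s,\eta}\rangle_T$ $\mathbbm{P}^{s,\eta}$-a.s., whence $\mathbbm{E}^{s,\eta}\big[\int_s^T|\tilde Z^i_r|^2\,dV_r\big]\le C\,\mathbbm{E}^{s,\eta}[(\tilde M^{s,\eta}_T)^2]<\infty$; combined with the progressive measurability of $\tilde Z^i$ this yields $\tilde Z^i\in\mathcal{L}^2_{uni}$ and completes the argument.
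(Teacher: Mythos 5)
Your proposal is correct and follows essentially the same route as the paper's proof: set $\tilde Y_s(\eta):=\mathbbm{E}^{s,\eta}\left[\xi+\int_s^T\tilde f_r\,dV_r\right]$ (progressively measurable by Remark \ref{Borel} and Lemma \ref{LemmaBorel}), identify it with $\tilde Y^{s,\eta}$ on $[s,T]$ through the conditional-expectation property of the canonical class and the martingale property of $\tilde M^{s,\eta}$, take $\tilde M$ to be the corresponding increment random field, and obtain $\tilde Z^i$ from Proposition \ref{bracketMAFs}. The only differences are cosmetic: you additionally spell out the $\mathcal{L}^2_{uni}$ bound for $\tilde Z^i$ (Kunita--Watanabe together with the q.s.\ bound on $\frac{d\langle N^i\rangle}{dV}$ from Hypothesis \ref{HypN}), which the paper leaves implicit in its appeal to Proposition \ref{bracketMAFs}, and you should, as the paper does, set $\tilde M_{t,u}:=0$ on the exceptional set where $\int_t^u|\tilde f_r|\,dV_r=+\infty$ (null under every relevant $\mathbbm{P}^{s,\eta}$) so that the random field is everywhere well defined and $\mathcal{F}^o_u$-measurable.
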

\begin{remark} \label{LR41}
The existence, for any $(s,\eta)$, of a unique solution
$(\tilde Y^{s,\eta},\tilde M^{s,\eta})$ of  
\eqref{FBSDEftilde} holds because $\xi$ and
  $(t,\omega,y,z)\mapsto \tilde{f}_t(\omega)$ trivially verify the hypothesis of  Theorem \ref{T45}.
\end{remark}

\begin{proof} \
We set  $\tilde Y:(s,\eta)\mapsto \mathbbm{E}^{s,\eta}\left[\xi + \int_s^T \tilde{f}_rdV_r\right]$ 
which is $\mathbbm{F}^o$-progressively measurable by Remark \ref{Borel} and 
   Lemma \ref{LemmaBorel}. Therefore,  
for a fixed $t \in[s,T]$ we have 
$\mathbb {P}^{s,\eta}$-a.s. 
\begin{equation*}
\begin{array}{rcl}
    \tilde Y_t(\omega) &=& \mathbbm{E}^{t,\omega}\left[\xi + \int_t^T \tilde{f}_rdV_r\right]\\
     &=& \mathbbm{E}^{s,\eta}\left[\xi + \int_t^T \tilde{f}_rdV_r\middle|\mathcal{F}_t\right](\omega)\\
     &=& \mathbbm{E}^{s,\eta}\left[\tilde Y^{s,\eta}_t+(\tilde M^{s,\eta}_T-\tilde M^{s,\eta}_t)|\mathcal{F}_t\right](\omega)\\
     &=& \tilde Y^{s,\eta}_t(\omega).
\end{array}
\end{equation*}
The second equality follows by Remark \ref{Borel} and the third one
uses \eqref{FBSDEftilde}.
For every $0\leq t\leq u$ and $\omega\in\Omega$ we set
\begin{equation}
	 \tilde M_{t,u}(\omega):=\left\{\begin{array}{l}
	 \tilde Y_{u\wedge T}(\omega)-\tilde Y_{t\wedge T}(\omega)-\int_{t\wedge T}^{u\wedge T}\tilde{f}_r(\omega)dV_r \text{ if }\int_{t\wedge T}^{u\wedge T}|\tilde{f}(\omega)|_rdV_r<+\infty,\\
	 0\text{ otherwise}.
	 \end{array} \right.
\end{equation}
 For fixed $(s,\eta)$, \eqref{FBSDEftilde}
 implies 
$d\tilde Y^{s,\eta}_r=-\tilde{f}_rdV_r+d\tilde M^{s,\eta}_r$. On the other hand
 $\int_s^T|\tilde{f}|_rdV_r<+\infty$ $\mathbbm{P}^{s,\eta}$ a.s.; so for any $s\leq t\leq u$ we have
 $\tilde M^{s,\eta}_u-\tilde M^{s,\eta}_t=\tilde M_{t,u}$ $\mathbb {P}^{s,\eta}$- a.s.  Taking into account that $\tilde M^{s,\eta}$ is square integrable  and the fact that previous 
equality holds for any $(s,\eta)$ and $t\leq u$, then $(\tilde M_{t,u})_{0\leq t\leq u}$ indeed defines a  square integrable path-dependent MAF. 
$Y$ belongs to $\mathcal{L}^2_{uni}$ 
because of the validity the two following arguments hold for all $(s,\eta)$.
 First $Y$ is a $\mathbbm{P}^{s,\eta}$-modification 
of $Y^{s,\eta}$ on $[s,T]$,  so 
 by  Lemma \ref{ModifImpliesdV}
$ Y = Y^{s,\eta}$
 $dV\otimes\mathbbm{P}^{s,\eta}$ a.e.;
second $Y^{s,\eta}\in\mathcal{L}^2(dV\otimes\mathbbm{P}^{s,\eta})$.
 The existence of $Z$ follows setting for all $i$, $Z^i=\frac{d\langle  \tilde M,N^i\rangle}{dV}$, see Notation \ref{RadonAF} and Proposition \ref{bracketMAFs}.
\end{proof}

\begin{notation} \label{N49}
For every fixed $(s,\eta)\in [0,T]\times \Omega$, we will denote by  $(Y^{k,s,\eta},M^{k,s,\eta})_{k\in\mathbbm{N}}$ the Picard iterations  associated to  $BSDE^{s,\eta}(f,\xi)$ as defined in Notation A.13 in \cite{paper3} and $Z^{k,s,\eta}:=(Z^{k,1,s,\eta},\cdots Z^{k,d,s,\eta})$ will denote $\frac{\langle M^{k,s,\eta},N^{s,\eta}\rangle}{dV}$. \\

This means that for all $(s,\eta)\in[0,T]\times\Omega$, $(Y^{0,s,\eta},M^{0,s,\eta})\equiv (0,0)$ and for all $k\geq 1$, we have on $[s,T]$  
\begin{equation}\label{defYk}
	Y^{k,s,\eta}=\xi+\int_{\cdot}^{T}f(r,\cdot,	Y^{k-1,s,\eta}_r,Z^{k-1,s,\eta}_r)dV_r-(M^{k,s,\eta}_T-M^{k,s,\eta}_{\cdot}),
\end{equation}
in the sense of $\mathbbm{P}^{s,\eta}$-indistinguishability, and that for all $(s,\eta)\in[0,T]\times\Omega$, $k\geq 0$, $Y^{k,s,\eta},Z^{1,k,s,\eta},\cdots Z^{d,k,s,\eta}$ belong to $\mathcal{L}^2(dV\otimes\mathbbm{P}^{s,\eta})$, see Notation A.13 and Lemma A.2 in \cite{paper3}.

\end{notation}
A direct consequence of Proposition A.15 in \cite{paper3} and the lines above it, is the following.
\begin{proposition}\label{cvYZk}
	For every $(s,\eta)\in[0,T]\times\Omega$,
each component of \\ $(Y^{k,s,\eta},Z^{1,k,s,\eta},\cdots Z^{d,k,s,\eta})$ tends to
each component of
 $(Y^{s,\eta},Z^{1,s,\eta},\cdots Z^{d,s,\eta})$ in $\mathcal{L}^2(dV\otimes\mathbbm{P}^{s,\eta})$ and $dV\otimes\mathbbm{P}^{s,\eta}$-a.e. when $k$ tends to infinity.
\end{proposition}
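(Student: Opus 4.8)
The plan is to reduce the statement entirely to the corresponding result already established in the companion paper \cite{paper3}, after checking that, at fixed $(s,\eta)$, the present path-dependent data meet all the hypotheses under which that result is formulated. Fix once and for all $(s,\eta)\in[0,T]\times\Omega$ and work in the stochastic basis $\left(\Omega,\mathcal{F}^{s,\eta},\mathbbm{F}^{s,\eta},\mathbbm{P}^{s,\eta}\right)$, which fulfils the usual conditions (see Notation \ref{CompletedBasis}). In this basis, $BSDE^{s,\eta}(f,\xi)$ \eqref{ET45} is exactly the equation denoted $BSDE(\xi,f,V,N^{s,\eta})$ in \cite{paper3}, as recorded in Remark \ref{RPicard}; its driving object is the cadlag square integrable martingale $N^{s,\eta}=M[\Psi]^{s,\eta}$, whose angular bracket is absolutely continuous with respect to $V$ with $\mathbbm{P}^{s,\eta}$-a.e. bounded density by Hypothesis \ref{HypN}, and $(f,\xi)$ satisfy Hypothesis \ref{HypBSDE}, so in particular $\xi\in\mathcal{L}^2(\mathbbm{P}^{s,\eta})$, $f(\cdot,\cdot,0,0)\in\mathcal{L}^2(dV\otimes\mathbbm{P}^{s,\eta})$, and $f$ is uniformly Lipschitz in $(y,z)$. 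These are precisely the standing assumptions of the BSDE theory of \cite{paper3}.

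Next I would identify the objects. By Notation \ref{N49}, $(Y^{k,s,\eta},M^{k,s,\eta})_{k\in\mathbbm{N}}$ are, by construction, the Picard iterations attached to $BSDE^{s,\eta}(f,\xi)$ as defined in Notation A.13 of \cite{paper3}, and $Z^{k,s,\eta}=\frac{d\langle M^{k,s,\eta},N^{s,\eta}\rangle}{dV}$; likewise $(Y^{s,\eta},M^{s,\eta})$ is the genuine solution from Theorem \ref{T45} and $Z^{s,\eta}=\frac{d\langle M^{s,\eta},N^{s,\eta}\rangle}{dV}$ in the sense of Notation \ref{N36}. Proposition A.15 of \cite{paper3}, together with the contraction estimate recorded in the lines just above it (which, after the customary $e^{\lambda V}$ reweighting, gives a Cauchy sequence with geometric rate), asserts exactly that $Y^{k,s,\eta}\to Y^{s,\eta}$ and $Z^{i,k,s,\eta}\to Z^{i,s,\eta}$ for each $1\le i\le d$, both in $\mathcal{L}^2(dV\otimes\mathbbm{P}^{s,\eta})$ and $dV\otimes\mathbbm{P}^{s,\eta}$-a.e. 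The a.e. convergence along the full sequence (not merely a subsequence) is a consequence of the geometric decay of $\|Y^{k+1,s,\eta}-Y^{k,s,\eta}\|_{2,s,\eta}$ and of $\|Z^{i,k+1,s,\eta}-Z^{i,k,s,\eta}\|_{2,s,\eta}$, which makes the telescoping series converge absolutely $dV\otimes\mathbbm{P}^{s,\eta}$-a.e.

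Finally I would simply assemble these facts componentwise and observe that $(s,\eta)$ was arbitrary, which is the claimed statement. There is essentially no real obstacle here: the only point demanding attention is the verification that the hypotheses of Proposition A.15 in \cite{paper3} transfer verbatim to the present framework once $(s,\eta)$ is fixed, namely that $N^{s,\eta}$ is square integrable with bracket absolutely continuous in $V$ (Hypothesis \ref{HypN}) and that $(f,\xi)$ are admissible data (Hypothesis \ref{HypBSDE}); after that the result is a citation. If one wished to avoid invoking the a.e. part of Proposition A.15 directly, the fallback is to first extract from the $\mathcal{L}^2$ convergence a subsequence converging $dV\otimes\mathbbm{P}^{s,\eta}$-a.e., and then upgrade to the whole sequence via the summability of the successive increments produced by the contraction constant being strictly less than one, which is precisely the mechanism underlying that proposition.
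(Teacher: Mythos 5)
Your proposal is correct and follows essentially the same route as the paper, which simply declares the proposition ``a direct consequence of Proposition A.15 in \cite{paper3} and the lines above it'': you reduce to that same citation after verifying, at fixed $(s,\eta)$, that Hypotheses \ref{HypN} and \ref{HypBSDE} place the data within the scope of the BSDE theory of \cite{paper3} (cf.\ Remark \ref{RPicard} and Notation \ref{N49}). Your additional remarks on the geometric contraction rate and the subsequence/summability mechanism behind the a.e.\ convergence are a sound elaboration of what the paper leaves implicit in the citation.
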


\begin{proposition} \label{P511}
	For each $k\in\mathbbm{N}$,  there exist processes $Y^k\in\mathcal{L}^2_{uni},
  Z^{k,1},\cdots, Z^{k,d}\in\mathcal{L}^2_{uni}$,
 a  square integrable path-dependent MAF $(M^k_{t,u})_{0\leq t\leq u}$ such that for all $(s,\eta)\in[0,T]\times \Omega$, we have the following.
	\begin{enumerate}
	\item $ Y^{k,s,\eta}$ is  on $[s,T]$ a $\mathbbm{P}^{s,\eta}$-modification of $Y^k$;
\item $ M^{k,s,\eta}$ is the cadlag version of $M^k$ under $\mathbbm{P}^{s,\eta}.$
\item For all $(s,\eta)\in[0,T]\times \Omega$ and $i\in[\![ 1;d]\!]$, $ Z^{k,i}=\frac{d\langle   M^{k,s,\eta},N^{i,s,\eta}\rangle}{dV}$ $dV\otimes\mathbbm{P}^{s,\eta}$ a.e.
\end{enumerate}
\end{proposition}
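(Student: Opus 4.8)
The plan is to argue by induction on $k$, the engine being the linearization Lemma \ref{L41}. For $k=0$ one takes $Y^0\equiv 0$, $M^0\equiv 0$ and $Z^{0,i}\equiv 0$: since $(Y^{0,s,\eta},M^{0,s,\eta})\equiv(0,0)$ and $Z^{0,i,s,\eta}\equiv 0$ by the very definition of the Picard iterations (Notation \ref{N49}), items 1--3 hold trivially.

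For the inductive step assume that, at level $k-1$, one has produced $Y^{k-1},Z^{k-1,1},\dots,Z^{k-1,d}\in\mathcal{L}^2_{uni}$ and a square integrable path-dependent MAF $M^{k-1}$ satisfying items 1--3. Write $Z^{k-1}:=(Z^{k-1,1},\dots,Z^{k-1,d})$ and set $\tilde f_r:=f(r,\cdot,Y^{k-1}_r,Z^{k-1}_r)$. This $\tilde f$ is $\mathbbm{F}^o$-progressively measurable, being the composition of the progressively measurable map $(r,\omega)\mapsto(r,\omega,Y^{k-1}_r(\omega),Z^{k-1}_r(\omega))$ with $f$; moreover, using the Lipschitz bound $|\tilde f_r|\leq|f(r,\cdot,0,0)|+K(|Y^{k-1}_r|+\|Z^{k-1}_r\|)$ together with $f(\cdot,\cdot,0,0),Y^{k-1},Z^{k-1,i}\in\mathcal{L}^2_{uni}$, one gets $\tilde f\in\mathcal{L}^2_{uni}$. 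Lemma \ref{L41} then yields $\tilde Y\in\mathcal{L}^2_{uni}$, a square integrable path-dependent MAF $\tilde M$ and $\tilde Z^1,\dots,\tilde Z^d\in\mathcal{L}^2_{uni}$ satisfying its three conclusions, where $(\tilde Y^{s,\eta},\tilde M^{s,\eta})$ solves \eqref{FBSDEftilde} with this driver $\tilde f$.

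The core of the argument is then to identify $(\tilde Y^{s,\eta},\tilde M^{s,\eta})$ with the Picard pair $(Y^{k,s,\eta},M^{k,s,\eta})$ for each $(s,\eta)$. By item 1 of the inductive hypothesis and Lemma \ref{ModifImpliesdV}, $Y^{k-1}=Y^{k-1,s,\eta}$ $dV\otimes\mathbbm{P}^{s,\eta}$-a.e.; by item 3 of the inductive hypothesis and the definition of $Z^{k-1,i,s,\eta}$ in Notation \ref{N49} (both sides being equal to $\frac{d\langle M^{k-1,s,\eta},N^{i,s,\eta}\rangle}{dV}$), also $Z^{k-1,i}=Z^{k-1,i,s,\eta}$ $dV\otimes\mathbbm{P}^{s,\eta}$-a.e. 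Hence $\tilde f_r=f(r,\cdot,Y^{k-1,s,\eta}_r,Z^{k-1,s,\eta}_r)$ $dV\otimes\mathbbm{P}^{s,\eta}$-a.e., so that $\int_t^T\tilde f_rdV_r=\int_t^Tf(r,\cdot,Y^{k-1,s,\eta}_r,Z^{k-1,s,\eta}_r)dV_r$ $\mathbbm{P}^{s,\eta}$-a.s. for every $t\in[s,T]$, and $(\tilde Y^{s,\eta},\tilde M^{s,\eta})$ solves, on $[s,T]$, exactly equation \eqref{defYk} that defines $(Y^{k,s,\eta},M^{k,s,\eta})$. Since the driver of that equation does not depend on $(y,z)$, the uniqueness part of Theorem \ref{T45} (invoked as in Remark \ref{LR41}) gives $\tilde Y^{s,\eta}=Y^{k,s,\eta}$ and $\tilde M^{s,\eta}=M^{k,s,\eta}$ up to indistinguishability on $[s,T]$. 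Setting $Y^k:=\tilde Y$, $M^k:=\tilde M$ and $Z^{k,i}:=\tilde Z^i$, items 1--3 at level $k$ then follow verbatim from the three conclusions of Lemma \ref{L41}.

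I expect the main obstacle to be this identification step: one must check carefully that the ``universal'' driver built from $(Y^{k-1},Z^{k-1})$ agrees $dV\otimes\mathbbm{P}^{s,\eta}$-a.e. with the $(s,\eta)$-dependent driver of the Picard scheme borrowed from \cite{paper3}, and that this $dV\otimes\mathbbm{P}^{s,\eta}$-a.e. identity already suffices to force equality of the two BSDE solutions through uniqueness. The remaining points --- progressive measurability of $\tilde f$, membership in $\mathcal{L}^2_{uni}$, and the transfer of items 1--3 from $(\tilde Y,\tilde M,\tilde Z)$ to $(Y^k,M^k,Z^k)$ --- are routine once Lemma \ref{L41} is available.
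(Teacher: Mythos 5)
Your proposal is correct and follows essentially the same route as the paper's own proof: induction on $k$, verification via the Lipschitz bound and the inductive hypothesis that the frozen driver $\tilde f=f(\cdot,\cdot,Y^{k-1},Z^{k-1})$ lies in $\mathcal{L}^2_{uni}$, and then an application of Lemma \ref{L41} combined with the uniqueness in Theorem \ref{T45} to identify its output with the Picard pair $(Y^{k,s,\eta},M^{k,s,\eta})$. The only difference is cosmetic: you spell out the identification step (equality of the drivers $dV\otimes d\mathbbm{P}^{s,\eta}$-a.e.\ via Lemma \ref{ModifImpliesdV} and item 3, then uniqueness) which the paper leaves implicit in the phrase ``since $(Y^{k,s,\eta},M^{k,s,\eta})$ is a solution of $BSDE^{s,\eta}(\tilde f,\xi)$''.
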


\begin{proof}
We prove the statement by induction on $k \ge 0.$
	It is clear that $Y^0\equiv 0$ and $M^0\equiv 0$ verify the assertion for $k=0$.
\\
Suppose the existence, for $k \ge 1$, of a square integrable  path-dependent MAF $M^{k-1}$ and  processes
 $Y^{k-1}$
 $ Z^{k-1,1},\cdots, Z^{k-1,d}\in\mathcal{L}^2_{uni}$ such that
the statements 1. 2. 3. above hold replacing 
$k$ with $k-1$. \\ 

	We fix $(s,\eta)\in[0,T]\times \Omega$. By Lemma \ref{ModifImpliesdV},
	$(Y^{k-1,s,\eta},Z^{k-1,s,\eta})=(Y^{k-1},Z^{k-1})$ $dV\otimes \mathbbm{P}^{s,\eta}$ a.e. 
Therefore by \eqref{defYk}
$$ Y_t^{k,s,\eta} = \xi + \int_t^T f\left(r,\cdot,Y^{k-1}_r,Z^{k-1}_r\right)dV_r  -(M^{k,s,\eta}_T - M^{k,s,\eta}_t), t \in [s,T].$$ 
According to Notation \ref{N36}, the equation \eqref{defYk} can be seen 
 as a BSDE of the type $BSDE^{s,\eta}(\tilde f,\xi)$ where $\tilde f:(t,\omega)\longmapsto f(t,\omega,Y^{k-1}_t(\omega),Z^{k-1}_t(\omega))$. We now verify that $\tilde f$ verifies the conditions under which Lemma \ref{L41} applies.

$\tilde f$ is
 $\mathbbm{F}^o$-progressively measurable
since $Y^{k-1},Z^{k-1}$ are $\mathbbm{F}^o$-progressively measurable
 and $f$ is $\mathcal{P}ro^o\otimes\mathcal{B}(\mathbbm{R})\otimes\mathcal{B}(\mathbbm{R}^d)$-measurable so $\tilde f$ is $\mathbbm{F}^o$-progressively measurable. 
Since 
$$|\tilde f(t,\omega)|=|f(t,\omega,Y^{k-1}_t(\omega),Z^{k-1}_t(\omega))|\leq |f(t,\omega,0,0)|+K(|Y^{k-1}_t(\omega)|+\|Z^{k-1}_t(\omega)\|),$$
for all $t,\omega$, with $f(\cdot,\cdot,0,0),Y^{k-1},Z^{1,k-1},\cdots,Z^{d,k-1}\in\mathcal{L}^2_{uni}$ by recurrence hypothesis, it is clear that $\tilde f\in\mathcal{L}^2_{uni}$.
Since 	$(Y^{k,s,\eta},M^{k,s,\eta})$  is a solution of
 $BSDE^{s,\eta}(\tilde f,\xi)$, Lemma \ref{L41} shows
the existence of suitable $Y^k,M^k,  Z^{k,1},\cdots, Z^{k,d}$ 
verifying the statement for the integer $k$.
\end{proof}

\begin{prooff}\\ of Proposition \ref{Defuv}.
We  define $\bar{Y}$ and $\bar Z^i,  1 \le i \le d$ by
 ${\bar Y}_s(\eta):= \underset{k\in\mathbbm{N}}{\text{limsup }}Y^k_s(\eta)$ and 
$\bar Z^i_s(\eta):=\underset{k\in\mathbbm{N}}{\text{limsup }}Z^{k,i}_s(\eta),$ 
for every  $(s,\eta)\in[0,T]\times \Omega$. \\
 $\bar{Y}$ and $\bar Z:=(\bar Z^1,\cdots,\bar Z^d)$ are $\mathbbm{F}^o$-progressively measurable. Combining Propositions \ref{P511}, \ref{cvYZk} and Lemma \ref{ModifImpliesdV} it follows that for every $(s,\eta)\in[0,T]\times \Omega$,

\begin{equation} \label{EaeConvergence}
\left\{\begin{array}{rcl}
     Y^k&\underset{k\rightarrow\infty}{\longrightarrow}& Y^{s,\eta}  \quad dV\otimes d\mathbbm{P}^{s,\eta}\\
     Z^{k,i}&\underset{k\rightarrow\infty}{\longrightarrow}& Z^{i,s,\eta}  \quad dV\otimes d\mathbbm{P}^{s,\eta}, \text{ for all }
1 \le i \le d.
\end{array}\right.
\end{equation}
Let us fix $1 \le i \le d$ and   $(s,\eta)\in[0,T]\times \Omega$.
There is a set $A^{s,\eta}$ 
of  full  $dV\otimes d\mathbbm{P}^{s,\eta}$ measure
such that for all $(t, \omega) \in A^{s,\eta}$ we have
\begin{equation}\label{E420}
\left\{\begin{array}{rcccccl}
     \bar Y_t(\omega)&=&\underset{k\in\mathbbm{N}}{\text{limsup }}Y^k_t(\omega)&=&\underset{k\in\mathbbm{N}}{\text{lim }}Y^k_t(\omega) &=& Y^{s,\eta}_t(\omega)  \\
     \bar Z_t(\omega)&=&\left(\underset{k\in\mathbbm{N}}{\text{limsup }} Z^{k,i}_t(\omega)\right)_{i\leq d}&=&\left(\underset{k\in\mathbbm{N}}{\text{lim }} Z^{k,i}_t(\omega)\right)_{i\leq d} &=& Z^{s,\eta}_t(\omega). 
\end{array}\right.
\end{equation}
This implies
\begin{eqnarray} \label{E37bis} 
 \bar Y_t(\omega)&=& Y^{s,\eta} \ dV\otimes d\mathbbm{P}^{s,\eta} {\rm a.e.} \\
 \bar Z_t(\omega)&=& Z^{s,\eta} \ dV\otimes d\mathbbm{P}^{s,\eta} {\rm a.e.} \nonumber
 \end{eqnarray}
By \eqref{E37bis} and \eqref{ET45},
under every $\mathbbm{P}^{s,\eta}$, we actually have
\begin{equation} \label{E421}
Y^{s,\eta} = \xi + \int_{\cdot}^T f\left(r,\cdot,\bar{Y}_r,\bar Z_r\right)dV_r  -(M^{s,\eta}_T - M^{s,\eta}_{\cdot}),
\end{equation}
in the sense of $\mathbbm{P}^{s,\eta}$-indistinguishability,
 on the interval $[s,T]$. 
At this stage, in spite of \eqref{E37bis}, $\bar Y$ is not
necessarily a modification of $Y^{s,\eta}$. We will construct processes $Y,Z$
fulfilling indeed the statement of Proposition \ref{Defuv}.
In particular $Y$ fulfills item 1. that is a bit stronger than
 \eqref{E37bis}.

We set now $\tilde f:(t,\omega)\mapsto f(t,\omega,\bar{Y}_t(\omega),\bar Z_t(\omega))$; 
equation \eqref{E421} is now of the form 
\eqref{FBSDEftilde}
and we show that $\tilde f$ so defined verifies the conditions under which  
Lemma \ref{L41} applies.
$\tilde f$ is  $\mathbbm{F}^o$-progressively measurable 
since $f$ is $\mathcal{P}ro^o\otimes\mathcal{B}(\mathbbm{R})\otimes\mathcal{B}(\mathbbm{R}^d)$-measurable and $\bar Y,\bar Z$ are $\mathbbm{F}^o$-progressively measurable.
\\
Moreover, for any $(s,\eta)\in[0,T]\times \Omega$, $Y^{s,\eta}$ and $Z^{1,s,\eta},\cdots,Z^{d,s,\eta}$ belong to $\mathcal{L}^2(dV\otimes d\mathbbm{P}^{s,\eta})$;
 therefore by \eqref{E420}, so do $\bar{Y}$ and $\bar Z^1,\cdots,\bar Z^d$. 
 
Since this holds for all $(s,\eta)$, then  $\bar{Y}$ and $\bar Z^1,\cdots,\bar Z^d$ belong to $\mathcal{L}^2_{uni}$.
\\
Finally, since $|\tilde f(t,\omega)|=|f(t,\omega,\bar Y_t(\omega),\bar Z_t(\omega))|\leq |f(t,\omega,0,0)|+K(|\bar Y_t(\omega)|+\|\bar Z_t(\omega)\|)$ for all $t,\omega$, with
$f(\cdot,\cdot,0,0),\bar Y,\bar Z^1,\cdots,\bar Z^{d}\in\mathcal{L}^2_{uni}$, it is clear that $\tilde f\in\mathcal{L}^2_{uni}$.
Now \eqref{E421} can be considered as a BSDE where the driver does
not depend on $y$ and $z$ of the form \eqref{FBSDEftilde}.
 We can therefore apply Lemma \ref{L41} 
to $\tilde{f}$ and conclude on the existence of $(Y,M,Z^1,\cdots,Z^d)$ verifying the three items of the proposition. 

It remains to prove now the last assertion of Proposition \ref{Defuv}.
We  fix some $(s,\eta)$. The first item implies  that $Y_s= Y^{s,\eta}_s$ $\mathbbm{P}^{s,\eta}$ a.s. But since $Y_s$ is $\mathcal{F}^o_s$-measurable and $\mathbbm{P}^{s,\eta}(\omega^s=\eta^s)=1$, it also yields that $Y_s$ is $\mathbbm{P}^{s,\eta}$ a.s. equal to the deterministic value $Y_s(\eta)$ hence $Y^{s,\eta}_s$ is $\mathbbm{P}^{s,\eta}$ a.s. equal to the deterministic value $Y_s(\eta)$. 
This also proves that $Y$ is unique because it is given by
 $Y:(s,\eta)\longmapsto Y^{s,\eta}_s$. The uniqueness of $Z$ up to zero potential sets is immediate by the third item of the proposition and Definition \ref{zeropotential}.
\end{prooff}
{\bf ACKNOWLEDGEMENTS.}
The research of the first named author was provided 
by a PhD fellowship (AMX) of the Ecole Polytechnique.
The contribution of the second named author
 was partially supported by the grant 346300 for IMPAN from the Simons Foundation and the matching 2015-2019 Polish MNiSW fund.


\end{appendix}

\bibliographystyle{plain}
\bibliography{../biblioPhDBarrasso_bib/biblioPhDBarrasso}

\end{document}